\newtheorem{theorem}{Theorem}[section]
\newtheorem{proposition}[theorem]{Proposition}
\newtheorem{lemma}[theorem]{Lemma}
\newtheorem{corollary}[theorem]{Corollary}
\theoremstyle{definition}
\newtheorem{definition}[theorem]{Definition}
\theoremstyle{remark}
\newtheorem{remark}[theorem]{Remark}
\numberwithin{equation}{section}
\newcommand{\argmax}{\mathop{\rm arg~max}\limits}
\begin{document}
\title[On viscosity solutions of space-fractional diffusion equations]{On viscosity solutions of space-fractional diffusion equations of Caputo type}
\author{Tokinaga~Namba}
\address{Mathematical Science \& Technology Research Laboratory, Advanced Technology Research Laboratories, Research \& Development, Nippon Steel Corporation, 20-1 Shintomi, Futtsu, Chiba 293-8511, Japan.}
\email{namba.rb3.tokinaga@jp.nipponsteel.com}
\author{Piotr~Rybka}
\address{Institute of Applied Mathematics and Mechanics, Warsaw University ul. Banacha 2, 02-097 Warsaw, Poland}
\email{rybka@mimuw.edu.pl}
\subjclass[2010]{35R11, 35D40, 70H33}
\keywords{fractional diffusion, divergence operators, viscosity solutions}

\begin{abstract}
We study a fractional diffusion problem in the divergence form in one space dimension. We define a notion of the viscosity solution. We prove existence of viscosity solutions to the fractional diffusion problem with the Dirichlet boundary values by Perron's method. Their uniqueness follows from a proper maximum principle. We also show a stability result and basic regularity of solutions.
\end{abstract}

\maketitle
\normalem

\section{Introduction}
In this paper we study the Cauchy-Dirichlet problem whose governing equation is 
\begin{equation}\label{e:rn1}
u_t = (D^\alpha_x u)_x+f \quad \text{in $(0,l)\times(0,T)$},
\end{equation}
where $f$ is a continuous function.
The operator $D^\alpha_x u$ is the spacial Caputo derivative and, for an absolutely continuous function $v$, it is defined as
\begin{equation}\label{rdC}
D^\alpha_x v(x) = \frac{1}{\Gamma(1-\alpha)} \int_0^x \frac {v'(y)} {(x-y)^\alpha}dy.
\end{equation}
Our main goal is to introduce a notion of generalized solutions and to show that this notion leads to the well-posedness of the Cauchy-Dirichlet problem of \eqref{e:rn1}.

Before we engage into a theoretical development we will explain our motivation to study such an equation.
In fact, problem \eqref{e:rn1} is a simplification of a free boundary problem (FBP), which should be considered on a noncylindrical domain.
It is a simplification of a model of the sub-surface movement of water, under the assumption that it moves as a saturated `plug' through a soil that has a constant moisture storage capacity, see \cite{Voller}.
From the point of view of modeling equation \eqref{e:rn1} should be understood as a balance equation, where we have on the right hand side the divergence of a flux. This would be particularly important if we were study the multidimensional case.
However, we will restrict our attention only to the one-dimensional case, because there is plenty of open question.

In particular, it seems that the literature on well-posedness of linear problems of the form like  \eqref{e:rn1} is rather limited.
However, the number  of papers and books on time fractional problems is growing, so we mention just a few monographs, \cite{Diethelm}, \cite{Podlubny} or \cite{Zhou}.
A reason for such a situation is that this line of research originated from the Volterra integral equations, see e.g. \cite{Pruss}. 

There is number of recent papers addressing the fractional diffusion problems from the point of view of the semigroup theory, see \cite{Baeumer1},  \cite{Baeumer2}. These authors construct a strongly continuous semigroup.
A stronger result in this direction is obtained in \cite{Rys}, where the author shows that operator  $(D^\alpha_x u)_x$ generates an analytic semigroup, but with different boundary conditions in comparison with ours.

It is obvious that before we tackle equation \eqref{e:rn1} in a domain which changes in time, we must understand it in a fixed domain.
Our paper is just a step toward this bigger goal.
We notice that the literature on time fractional equations, which includes several books, is quite broad and we will use the applicable tools.

Among the many possible methods to address \eqref{e:rn1} we choose the theory of viscosity solutions.
Since the setting which we consider in this paper apparently has not been considered in the literature, we have to find a suitable notion of viscosity solutions.
We note that there are papers dealing with viscosity solutions for integro-differential equations, \cite{BarlesChasseigneCiomagaImbert}, \cite{GigaNamba} or \cite{Mou}, but the non-locality is with respect to the time variable, not space variable like in our case.

A special feature of \eqref{e:rn1} is that formally we require existence of two spacial derivatives.
However, we come up with a seemingly less demanding definition of solution.
We devote Section 2 to this issue.
We also present our definition of the viscosity solution, which draws heavily on our experience with time fractional Hamilton-Jacobi equations, \cite{GigaNamba}.

After settling the issue of a proper definition of solution, we establish its basic properties.
The main result is a  comparison principle, see Theorem \ref{t:comparison}.
Once we show it, we may establish existence of solutions by means of the Perron method. 
This is done in Theorem \ref{t:existence}.
The main difficulty is showing that the sup of subsolutions is also a supersolution. 
The comparison principle implies uniqueness of solutions. 
We also show the stability of solutions with respect to the fractional order of equation, this is the content of Theorem \ref{t:staborder}. 

Finally, we address regularity of solutions in Section 7. Namely, we show that if the data are Lipschitz continuous, then the unique solution is H\"older continuous with respect to the space variable, with exponent $\alpha$, see Proposition \ref{p:lateralregularity}. Moreover, this solution is Lipschitz continuous with respect to the time variable, see  Propositions \ref{p:initial} and  \ref{p:finalT} and locally Lipschitz continuous is space, cf. Proposition \ref{p:regularity}.

\section{Preliminaries on fractional derivatives}

Throughout this paper the integral of the form
$$
I_{(a,b)}[w](x)=\int_a^bw(x,z)\frac{dz}{z^{\alpha+2}},
$$
where $0\le a<b<+\infty$, is mainly handled for semicontinuous functions $w:[0,l)\times[0,l)\to\mathbb{R}$.
In this section we present properties of such a function derived from $(D_x^\alpha u)_x$, which is introduced as an operator $K$ later.
Here, all integrals are interpreted in terms of the Lebesgue integral.
When $a>0$, we say that $I_{(a,b)}[w](x)$ \emph{is well-defined} if either $I_{(a,b)}[w^+](x)$ or $I_{(a,b)}[w^-](x)$ is finite, where $w^\pm:=\max\{\pm w, 0\}$.
When $a=0$, $I_{(0,b)}[w](x)$ is regarded as $\lim_{a\to0^+}I_{(a,b)}[w](x)$, and we say that $I_{(0,b)}[w](x)$ is \emph{well-defined} if $I_{(a,b)}[w^\pm](x)$ are finite for each $a\in(0,b)$ and $\lim_{a\to0^+}I_{(a,b)}[w](x)$ exists and it is finite.
In other words, these conditions are equivalent to integrability of $z\mapsto w(x,z)$ for a.e. $x\in (0,l)$.

The following proposition plays an essential role in defining the solution introduced in this paper.
\begin{proposition}\label{p:another}
Let $u:[0,l)\to\mathbb{R}$ be such that $u\in C^2(0,l)\cap C[0,l)$ and $u'\in L^1(0,l)$.
Then, $(D^\alpha_x u)_x$ exists everywhere in $(0,l)$ and
\begin{equation}\label{e:another}
\begin{split}
(D^\alpha u)_x(x)=
\frac{1}{\Gamma(1-\alpha)}&\left(\frac{\alpha(u(0)-u(x))+(\alpha+1)u'(x)x}{x^{\alpha+1}}\right.\\
&\quad\left.+\alpha(\alpha+1)\int_0^x[u(x-z)-u(x)+u'(x)z]\frac{dz}{z^{\alpha+2}}\right)
\end{split}
\end{equation}
for $x\in(0,l)$.
\end{proposition}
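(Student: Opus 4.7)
The plan is to transform $D^\alpha_x u$ by an integration by parts, regularize so that Leibniz's rule applies, differentiate in $x$, and then match the result against the target by a second integration by parts.

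First I substitute $z = x - y$ in the definition of $D^\alpha_x u$ and integrate by parts in $z$, using $u(x-z) - u(x)$ as an antiderivative of $-u'(x-z)$; the boundary value at $z=0$ vanishes by continuity of $u$ and $\alpha<1$. This yields
\[
\Gamma(1-\alpha)\, D^\alpha_x u(x) = \frac{u(x)-u(0)}{x^\alpha} - \alpha \int_0^x \frac{u(x-z) - u(x)}{z^{\alpha+1}}\, dz.
\]
To make the integrand amenable to differentiation in $x$, I add and subtract $u'(x)z$ in the numerator. Taylor's theorem with integral remainder, valid because $u\in C^2(0,l)$, gives $u(x-z)-u(x)+u'(x)z = \int_0^z u''(x-s)(z-s)\,ds$, so the regularized integrand is $O(z^{1-\alpha})$ near $z=0$. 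The extracted linear piece contributes the explicit term $\alpha u'(x)x^{1-\alpha}/(1-\alpha)$.

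Next I differentiate in $x$ term by term. The explicit terms are elementary; for the regularized integral Leibniz's rule applies because the integrand and its $x$-derivative $(u'(x-z) - u'(x) + u''(x) z) z^{-\alpha-1}$ are locally uniformly dominated by an $L^1(0,x)$ function (their singularity at $z=0$ is $O(z^{-\alpha})$ thanks to the same Taylor argument, and on the rest they are continuous on compacts). The upper-limit boundary yields $(u(0) - u(x) + u'(x)x)\, x^{-\alpha-1}$. After cancelling the common $x^{-\alpha-1}$ and collecting $x^{-\alpha}$ coefficients, I obtain
\[
\Gamma(1-\alpha)\, (D^\alpha_x u)_x(x) = \frac{u'(x)}{x^\alpha} - \alpha \int_0^x \frac{u'(x-z) - u'(x) + u''(x) z}{z^{\alpha+1}}\, dz + \frac{\alpha u''(x) x^{1-\alpha}}{1-\alpha}.
\]

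Finally I identify this with the right-hand side of \eqref{e:another}. Integrating by parts in the $z^{-\alpha-2}$ integral in \eqref{e:another} with the antiderivative $-z^{-\alpha-1}/(\alpha+1)$ (the $z=0$ boundary vanishes by the $O(z^2)$ estimate) gives $-(u(0)-u(x)+u'(x)x)/((\alpha+1)x^{\alpha+1})$ plus $(\alpha+1)^{-1}\int_0^x (u'(x)-u'(x-z))z^{-\alpha-1}\,dz$. Combined algebraically with the explicit prefactor $(\alpha(u(0)-u(x))+(\alpha+1)u'(x)x)/x^{\alpha+1}$, the right-hand side of \eqref{e:another} collapses to $u'(x)x^{-\alpha} - \alpha \int_0^x (u'(x-z)-u'(x)) z^{-\alpha-1}\,dz$. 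This matches my formula after splitting $u''(x)z$ off in the integrand and using $u''(x)\int_0^x z^{-\alpha}\,dz = u''(x)x^{1-\alpha}/(1-\alpha)$ to cancel the remaining explicit term. The main obstacle is justifying Leibniz's rule through the singularity at $z=0$ and avoiding evaluation at $z=x$, where $u'$ need not extend continuously (i.e., $u'(0^+)$ may fail to exist); both are handled by the chosen regularization, which keeps all integrands within a dominated-convergence framework.
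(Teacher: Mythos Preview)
Your computation is algebraically correct and the final identification with the target formula is fine, but there is a real gap in your justification of Leibniz's rule at the upper endpoint $z=x$. You acknowledge that $u'(0^+)$ may fail to exist, yet claim the regularization ``keeps all integrands within a dominated-convergence framework.'' This is not so: adding $u'(x)z$ only tames the $z\to 0$ singularity, while the $x$-derivative of your regularized integrand is $(u'(x-z)-u'(x)+u''(x)z)z^{-\alpha-1}$, and the factor $u'(x-z)$ near $z=x$ is completely unaffected. Your assertion that the $x$-derivative is ``continuous on compacts'' away from $z=0$ is therefore false in general (take $u(y)=\sqrt{y}$). Worse, dominated convergence cannot be applied directly: as $x'$ ranges over a neighborhood of $x$, the singularity of $z\mapsto u'(x'-z)$ sits at $z=x'$ and sweeps out an interval, so no single $L^1$ majorant exists.

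The paper avoids exactly this obstruction by splitting the defining integral at a fixed level $y=\varepsilon$ into $I_1=\int_0^\varepsilon u'(y)(x-y)^{-\alpha}\,dy$ and $I_2=\int_\varepsilon^x$, and differentiating $I_1$ \emph{in the $y$-variable form}: there the $x$-dependence lives entirely in the smooth kernel $(x-y)^{-\alpha}$, so $\partial_x$ never touches $u'(y)$ near $y=0$, and only $u'\in L^1$ is needed. Your route can be repaired --- for instance, split off $\int_{x-\eta}^x$, revert to $y$-coordinates on that piece, and let $\eta\to 0$; or run a direct $\varepsilon$--$\delta$ argument using the absolute continuity of $\eta\mapsto\int_0^\eta|u'|$ --- but as written the crucial step is not justified.
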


\begin{proof}
First of all, we note that for each $x\in(0,l)$ the integral on the right-hand side of \eqref{e:another} exists, that is, $[u(x-z)-u(x)+u'(x)z]/z^{\alpha+2}\in L^1(0,x)$.
In fact, Taylor's theorem implies that for each $z\in(0,x/2)$
$$
|u(x-z)-u(x)+u'(x)z|\le \frac{\sup_{y\in(x-z,x)}|u''(y)|}{2}z^2\le\frac{\max_{[x/2,x]}|u''|}{2}z^2.
$$
The right-hand side multiplied by $z^{-\alpha-2}$ is integrable on $(0,x/2)$ and so the dominated convergence theorem implies that 
$$
\text{$\displaystyle\int_{0}^{x/2}[u(x-z)-u(x)+u'(x)z]\frac{dz}{z^{\alpha+2}}$\, exists.}
$$
The integral on the remaining interval $(x/2,x)$ does not present a problem since $z\mapsto (u(x-z)-u(x)+u'(x)z)z^{-\alpha-2}$ is bounded on $(x/2,x)$ and hence integrable.
Thus, it is enough to show \eqref{e:another} to prove Proposition \ref{p:another}.
Hereafter we show \eqref{e:another}.
To this end, we fix any small $\varepsilon>0$ and we introduce
$$
I_1(x):=\int_0^\varepsilon\frac{u'(y)}{(x-y)^\alpha}dy\quad\text{and}\quad I_2(x):=\int_\varepsilon^x\frac{u'(y)}{(x-y)^\alpha}dy
$$
for $x\in(2\varepsilon,l)$. We note that $(D_x^\alpha u)_x=\frac{1}{\Gamma(1-\alpha)}\frac{d}{dx}(I_1+I_2)$.

It is easily seen that the differentiation under the integral sign is applicable to $I_1$ for each $x\in(2\varepsilon,l)$, since $1/(x-y)^{\alpha+1}\le 1/\varepsilon^{\alpha+1}$ for all $x\in(2\varepsilon,l)$ and $y\in(0,\varepsilon)$ and $u'$ is integrable.
Together with straighforward calculations we get
\begin{align*}
\frac{d}{dx}I_1(x)
&=-\alpha\int_0^\varepsilon\frac{u'(y)}{(x-y)^{\alpha+1}}dy\\
&=\alpha\left(\frac{u(0)}{x^{\alpha+1}}-\frac{u(\varepsilon)}{(x-\varepsilon)^{\alpha+1}}\right)+\alpha(\alpha+1)\int_0^\varepsilon\frac{u(y)}{(x-y)^{\alpha+2}}dy\\
&=\frac{\alpha(u(0)-u(x))+(\alpha+1)u'(x)x}{x^{\alpha+1}}\\
&\quad-\frac{\alpha(u(\varepsilon)-u(x))+(\alpha+1)u'(x)(x-\varepsilon)}{(x-\varepsilon)^{\alpha+1}}\\
&\quad+\alpha(\alpha+1)\int_0^\varepsilon\frac{u(y)-u(x)-u'(x)(y-x)}{(x-y)^{\alpha+2}}dy.
\end{align*}

Differentiation under the integral sign is applicable also to $I_2$  (see, e.g., \cite[subsection 7.2.1]{GigaGigaSaal}), and then
$$
\frac{d}{dx}I_2(x)=\frac{u'(\varepsilon)}{(x-\varepsilon)^\alpha}+\int_\varepsilon^{x}\frac{u''(y)}{(x-y)^\alpha}dy.
$$
Since $u\in C^2(0,l)$, it holds that
$$
\lim_{y\to x}\frac{u'(y)-u'(x)}{(x-y)^{\alpha}}=0
$$
and
$$
\lim_{y\to x}\frac{u(y)-u(x)-u'(x)(y-x)}{(x-y)^{\alpha+1}}=0.
$$
Thus, straightforward calculations imply that
\begin{align*}
\frac{d}{dx}I_2(x)
&=\frac{u'(\varepsilon)}{(x-\varepsilon)^\alpha}+\int_\varepsilon^{x}\frac{\frac{d}{dy}(u'(y)-u'(x))}{(x-y)^\alpha}dy\\
&=\frac{u'(x)}{(x-\varepsilon)^\alpha}-\alpha\int_\varepsilon^{x}\frac{u'(y)-u'(x)}{(x-y)^{\alpha+1}}dy\\
&=\frac{u'(x)}{(x-\varepsilon)^\alpha}-\alpha\int_\varepsilon^{x}\frac{\frac{d}{dy}(u(y)-u(x)-u'(x)(y-x))}{(x-y)^{\alpha+1}}dy\\
&=\frac{\alpha(u(\varepsilon)-u(x))+(\alpha+1)u'(x)(x-\varepsilon)}{(x-\varepsilon)^{\alpha+1}}\\
&\quad+\alpha(\alpha+1)\int_\varepsilon^{x}\frac{u(y)-u(x)-u'(x)(y-x)}{(x-y)^{\alpha+2}}dy.
\end{align*} 

Consequently, for all $x\in(2\varepsilon,l)$ we obtain
\begin{align*}
(D^\alpha_x u)_x(x)=\frac{1}{\Gamma(1-\alpha)}&\left(\frac{\alpha(u(0)-u(x))+(\alpha+1)u'(x)x}{x^{\alpha+1}}\right.\\
&\quad\left.+\alpha(\alpha+1)\int_0^{x}\frac{u(y)-u(x)-u'(x)(y-x)}{(x-y)^{\alpha+2}}dy\right).
\end{align*}
After changing the variable of integration by setting $z=\hat{x}-y$, we reach \eqref{p:another} since $\varepsilon$ is arbitrary.
\end{proof}

The following lemma states the maximum principle, which is valid due to \eqref{e:another}.

\begin{lemma}\label{l:maximum}
Assume that $u\in C^2(0,l)\cap C[0,l)$ with $u'\in L^1(0,l)$ attains its maximum at $\hat{x}\in(0,l)$.
Then $(D_x^\alpha u)_x(\hat{x})\le0$.
\end{lemma}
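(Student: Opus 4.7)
The plan is to apply the representation formula \eqref{e:another} from Proposition \ref{p:another} directly at the maximum point $\hat{x}$ and read off the sign of each term. Since $\hat{x}\in(0,l)$ is an interior point at which $u$ attains its maximum and $u\in C^2$ near $\hat{x}$, the usual necessary condition gives $u'(\hat{x})=0$. This kills the $u'(\hat{x})x$ contribution in the boundary term and the $u'(\hat{x})z$ correction in the integrand, which is the reason that the formula collapses to an expression of definite sign.

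Substituting $u'(\hat{x})=0$ into \eqref{e:another}, the boundary term reduces to $\alpha(u(0)-u(\hat{x}))/\hat{x}^{\alpha+1}$, which is nonpositive because $u(\hat{x})$ is the maximum value of $u$ on $[0,l)$ and in particular $u(0)\le u(\hat{x})$. The integrand of the second term reduces to $[u(\hat{x}-z)-u(\hat{x})]/z^{\alpha+2}$, which is pointwise nonpositive for every $z\in(0,\hat{x})$ by the maximum property. Integrability of this singular integrand was already established in the first paragraph of the proof of Proposition \ref{p:another} (via the $C^2$ estimate near $z=0$), so no additional work is needed here.

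Finally, using $1/\Gamma(1-\alpha)>0$ and $\alpha(\alpha+1)>0$ for $\alpha\in(0,1)$, both surviving terms in \eqref{e:another} combine to a nonpositive quantity, giving $(D^\alpha_x u)_x(\hat{x})\le 0$.

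I do not anticipate any real obstacle: the whole content of the lemma is an immediate sign-checking consequence of the representation \eqref{e:another}, which was designed precisely to mimic the singular integral formulation used in the classical maximum principle for nonlocal operators such as the fractional Laplacian. The only mild points to mention in the write-up are the justification that $u'(\hat{x})=0$ (which uses $u\in C^2(0,l)$ so that $u$ is differentiable at $\hat{x}$) and a brief remark that integrability of the reduced integrand is inherited from Proposition \ref{p:another}.
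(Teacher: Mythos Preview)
Your proof is correct and follows exactly the same approach as the paper's own proof, which is the one-line observation that $u'(\hat{x})=0$ forces every term on the right-hand side of \eqref{e:another} to be nonpositive. Your write-up simply spells out in detail what the paper compresses into a single sentence.
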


\begin{proof}
Since $u'(\hat x)=0$, then the right-hand side of \eqref{e:another} is non-positive and the claim follows.
\end{proof}

For the sake of convenience, we introduce the following operators, when $u:(0,l)\to \mathbb{R}$ is a measurable function and $p\in\mathbb{R}$,
\begin{align*}
&J[u,p](x)=\frac{\alpha(u(0)-u(x))+(\alpha+1)px}{x^{\alpha+1}\Gamma(1-\alpha)},\\
&K_{(a,b)}[u,p](x)=\frac{\alpha(\alpha+1)}{\Gamma(1-\alpha)}\int_a^b[u(x-z)-u(x)+pz]\frac{dz}{z^{\alpha+2}},
\end{align*}
where $0\le a<b<x$.
We write also $J^\alpha[u,p](x)$ and $K^\alpha(a,b)[u,p](x)$ when making clear the dependence of $\alpha$. 
For a function $u:[0,l)\times A\to\mathbb{R}$, where $A\subset\mathbb{R}$ is an interval, we write $J[u, p](x,y):=J[u(\cdot,y), p](x)$ and $K_{(a,b)}[u,p](x,y):=K_{(a,b)}[u(\cdot,y), p](x)$ for $y\in A$.

\begin{proposition}\label{p:propertyofK}
Let $u$ be a real-valued upper semicontinuous function on $[0,l)$.  If $x\in(0,l)$, $\delta\in(0,x)$, and $p\in\mathbb{R}$, then $K_{(\delta,x)}[u,p](x)$ and it is bounded from above.
If $u\in C^2(0,l)\cap C[0,l)$ and $p_x=u'(x)$, then $K_{(0,x)}[u,p_x](x)$ is well-defined for each $x\in(0,l)$.
\end{proposition}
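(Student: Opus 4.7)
The proposition splits into two claims that can be handled separately.

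\textbf{Part 1} uses only upper semicontinuity. The plan is to observe that the set $\{x-z : z \in [\delta, x]\} = [0, x-\delta]$ is a compact subset of $[0, l)$, so upper semicontinuity of $u$ provides a finite supremum $M := \sup_{[0, x-\delta]} u$. Consequently the positive part of the integrand satisfies
$$
[u(x-z) - u(x) + p z]^+ \le (M - u(x)) + |p| x
$$
for every $z \in (\delta, x)$. Since $\delta > 0$ makes $\int_\delta^x z^{-\alpha-2}\,dz$ finite, this positive part is Lebesgue integrable, which by the convention at the start of Section 2 means that $K_{(\delta, x)}[u, p](x)$ is well-defined (possibly with value $-\infty$ if $u$'s negative part is too singular). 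Multiplying the constant majorant by the finite factor $\frac{\alpha(\alpha+1)}{\Gamma(1-\alpha)}\int_\delta^x z^{-\alpha-2}\,dz$ then yields the announced upper bound.

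\textbf{Part 2} is essentially the Taylor-expansion argument already carried out at the beginning of the proof of Proposition \ref{p:another}. I would split the domain of integration as $(0, x/2] \cup (x/2, x)$. On $(0, x/2]$, Taylor's theorem applied to $u \in C^2(0, l)$ supplies, for each such $z$, some $\xi \in (x-z, x) \subset [x/2, x]$ with $u(x-z) - u(x) + u'(x) z = \frac{1}{2} u''(\xi) z^2$; this gives the integrable majorant $|u(x-z) - u(x) + u'(x) z|\, z^{-\alpha-2} \le \frac{1}{2} C_x\, z^{-\alpha}$ with $C_x := \max_{[x/2, x]} |u''|$, integrability holding because $\alpha < 1$. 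On $(x/2, x)$, continuity of $u$ on the compact set $[0, x/2] \subset [0, l)$ together with the uniform bound $z^{-\alpha-2} \le (x/2)^{-\alpha-2}$ keeps the integrand bounded. Combining the two estimates, both the positive and negative parts of the integrand are integrable on every $(a, x)$ with $a \in (0, x)$, and the dominated convergence theorem delivers a finite limit as $a \to 0^+$, which is precisely the criterion for $K_{(0, x)}[u, u'(x)](x)$ to be well-defined.

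There is no genuine obstacle here; the proposition is essentially a bookkeeping lemma gathering the estimates needed to make the forthcoming viscosity-solution definition sensible. The only mildly delicate point is matching the USC case of Part 1 to the author's convention for ``well-defined'' from the start of Section 2: one must recognize that the negative part of the integrand may push the integral to $-\infty$, and that this is compatible with the proposition's assertion that $K_{(\delta, x)}[u, p](x)$ is bounded \emph{from above} only.
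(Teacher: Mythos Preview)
Your proposal is correct and follows essentially the same approach as the paper: for Part~1 the paper likewise bounds the positive part of the integrand by the (finite) maximum of the upper semicontinuous function $z\mapsto u(x-z)-u(x)+pz$ on the compact interval, and for Part~2 the paper simply refers back to Proposition~\ref{p:another}, whose opening paragraph is precisely the Taylor-expansion argument you spell out. One cosmetic point: your displayed bound $(M-u(x))+|p|x$ should strictly be replaced by its positive part $[(M-u(x))+|p|x]\vee 0$, since otherwise the inequality can fail when the right-hand side is negative.
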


\begin{remark}
Is is clear from the definition that for any real-valued lower semicontinuous functions $u$ on $[0,l)$, the operator $K_{(\delta,x)}[u,p](x)$ is well-defined for all $x\in (0,l)$ and $\delta\in(0,x)$. Moreover, it is bounded from below.
\end{remark}

\begin{proof}[Proof of Proposition \ref{p:propertyofK}] 
We only deal with the case that $u$ is upper semicontinuous, because the last assertion is obvious from Proposition \ref{p:another}.

Since $z\mapsto u(x-z)-u(x)+pz$ is upper semicontinuous on $[x-a,x]$ for $x\in(0,l)$, it attains its maximum and 
$$
(u(x-z)-u(x)+pz)^+\le \left[\max_{[a,x]}(u(x-z)-u(x)+pz)\right]\vee 0\quad\text{for all $z\in[x-\delta,x]$.}
$$
Here, $a \vee b:=\max\{a,b\}$ for $a,b\in\mathbb{R}$ and we will also use $a\wedge b:=\min\{a,b\}$ throughout this paper. 
Since the right-hand side is integrable on $(x-a,x)$, we see that $K_{(x-a,x)}[u,p](x)$ is well-defined.
It is straightforward to see that $K_{(x-a,x)}[u,p](x)$ is bounded from above.
\end{proof}

\begin{lemma}\label{l:important}
Let $\alpha\in(0,1)$, $a\in(0,l)$, $\varphi\in C^2(a,l)\cap C[a,l)$ and $\hat{x}\in(a,l)$. Let $u$ be a real-valued upper semicontinuous function on $[0,l)$.
Also, for each $\varepsilon>0$ let $\alpha_\varepsilon\in(0,1)$ and $x_\varepsilon\in(a,l)$ be sequences and let $u_\varepsilon$ be a sequence of real-valued upper semicontinuous functions on $[0,l)$. Assume that $u_\varepsilon\le u$ on $[0,l)$ and
\begin{equation}\label{e:lemmaconv}
\lim_{\varepsilon\to0}(\alpha_\varepsilon,x_\varepsilon,u_\varepsilon(x_\varepsilon))=(\alpha,\hat{x},u(\hat{x})).
\end{equation}
Set $p_\varepsilon=\varphi'(x_\varepsilon)$ and $p=\varphi'(\hat{x})$.
Then,
\begin{itemize}
\item[(i)] $\limsup_{\varepsilon\to0}J^{\alpha_\varepsilon}[u_\varepsilon,p_\varepsilon](x_\varepsilon)\le J^\alpha[u,p](\hat{x})$,
\item[(ii)] $\lim_{\varepsilon\to0}K_{(0,x_\varepsilon-a)}^{\alpha_\varepsilon}[\varphi,p_\varepsilon](x_\varepsilon)=K_{(0,\hat{x}-a)}^\alpha[\varphi,p](\hat{x})$,
\item[(iii)] $\limsup_{\varepsilon\to0}K_{(x_\varepsilon-a,x_\varepsilon)}^{\alpha_\varepsilon}[u_\varepsilon,p_\varepsilon](x_\varepsilon)\le K_{(\hat{x}-a,\hat{x})}^\alpha[u,p](\hat{x})$.
\end{itemize}
Moreover, if $\varphi\in C^2(0,l)\cap C[0,l)$, then $x\mapsto K_{(0,x)}[\varphi,p_x](x)$ with $p_x=\varphi'(x)$ is continuous in $(0,l)$.
\end{lemma}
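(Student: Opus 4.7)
The plan is to treat the four assertions by passage to the limit, distinguishing between those where $\varphi$ is differentiable (which permits a second-order Taylor bound and dominated convergence) and those where only the upper semicontinuity of $u$ and the inequality $u_\varepsilon\le u$ are available (which forces a reverse Fatou argument). Assertion (i) is purely algebraic: the denominator $x_\varepsilon^{\alpha_\varepsilon+1}\Gamma(1-\alpha_\varepsilon)$ is a continuous function of $(\alpha_\varepsilon,x_\varepsilon)$ converging to the positive limit $\hat{x}^{\alpha+1}\Gamma(1-\alpha)$. In the numerator, $(\alpha_\varepsilon+1)p_\varepsilon x_\varepsilon\to(\alpha+1)p\hat{x}$, while the fact that $u_\varepsilon(0)\le u(0)$, $u_\varepsilon(x_\varepsilon)\to u(\hat{x})$, and $\alpha_\varepsilon\to\alpha>0$ yields
\[
\limsup_{\varepsilon\to0}\alpha_\varepsilon\bigl(u_\varepsilon(0)-u_\varepsilon(x_\varepsilon)\bigr)\le\alpha\bigl(u(0)-u(\hat{x})\bigr),
\]
which, combined with the limit of the remaining terms, proves (i).

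For (ii) I would apply dominated convergence to the integrand $g_\varepsilon(z):=[\varphi(x_\varepsilon-z)-\varphi(x_\varepsilon)+p_\varepsilon z]z^{-\alpha_\varepsilon-2}$, splitting $(0,x_\varepsilon-a)$ into a near-zero piece and the rest. Fix $\delta\in(0,\hat{x}-a)$. On $(0,\delta)$, Taylor's theorem at $x_\varepsilon$ gives $|\varphi(x_\varepsilon-z)-\varphi(x_\varepsilon)+p_\varepsilon z|\le\tfrac12 Mz^2$ with $M$ an upper bound for $|\varphi''|$ on a compact neighbourhood of $\hat{x}$ inside $(a,l)$, so $|g_\varepsilon(z)|\le\tfrac12 Mz^{-\alpha_\varepsilon}$, which sits under a fixed integrable function since the $\alpha_\varepsilon$ eventually lie below some $\alpha'<1$. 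On $(\delta,\hat{x}-a)$ the integrand is continuous in $(\varepsilon,z)$ and uniformly bounded. Since $x_\varepsilon-a\to\hat{x}-a$ and $\alpha_\varepsilon(\alpha_\varepsilon+1)/\Gamma(1-\alpha_\varepsilon)\to\alpha(\alpha+1)/\Gamma(1-\alpha)$, dominated convergence yields (ii). The final continuity statement is entirely analogous, with $\varepsilon$ replaced by a sequence $x_n\to\hat{x}$ in $(0,l)$ and the compact neighbourhood chosen inside $(0,l)$.

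The most delicate point is (iii), where only $u_\varepsilon\le u$ is known and no lower bound is available, so neither dominated convergence nor ordinary Fatou applies. The substitution $y=x_\varepsilon-z$ rewrites the integral as
\[
\int_0^a\bigl[u_\varepsilon(y)-u_\varepsilon(x_\varepsilon)+p_\varepsilon(x_\varepsilon-y)\bigr]\frac{dy}{(x_\varepsilon-y)^{\alpha_\varepsilon+2}},
\]
on which the weight $(x_\varepsilon-y)^{-\alpha_\varepsilon-2}$ is uniformly bounded above and below by positive constants, since $x_\varepsilon-y\ge x_\varepsilon-a\to\hat{x}-a>0$. Upper semicontinuity of $u$ makes it bounded above on the compact set $[0,a]$, so the integrands admit a uniform $L^\infty$, hence $L^1(0,a)$, majorant. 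Pointwise, $u_\varepsilon(y)\le u(y)$ gives $\limsup_{\varepsilon\to 0}u_\varepsilon(y)\le u(y)$, while the other terms converge continuously, so the reverse Fatou lemma bounds the $\limsup$ of the above integral by
\[
\int_0^a\bigl[u(y)-u(\hat{x})+p(\hat{x}-y)\bigr]\frac{dy}{(\hat{x}-y)^{\alpha+2}}.
\]
Multiplying by the positive limiting prefactor and reverting the substitution yields (iii). The principal obstacle I foresee is precisely this reverse-Fatou step and its dependence on the uniform upper bound on $u$ over $[0,a]$; once it is in place, the remaining items reduce to standard continuity arguments.
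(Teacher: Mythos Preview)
Your proposal is correct and follows essentially the same line as the paper's proof: part (i) is handled algebraically via $u_\varepsilon(0)\le u(0)$, part (ii) by a Taylor estimate combined with dominated convergence, and part (iii) by a uniform upper bound coming from the upper semicontinuity of $u$ on $[0,a]$ together with the reverse Fatou lemma. The only cosmetic differences are that in (iii) you pass to the variable $y=x_\varepsilon-z$ to work on the fixed interval $(0,a)$, whereas the paper keeps the $z$-variable and uses indicator functions on a fixed larger interval; and in (ii) you split the integral near $z=0$ while the paper splits near the moving endpoint $x_\varepsilon-a$, but both reductions lead to the same dominated-convergence argument.
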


\begin{proof}
(i) Using the assumption $u_\varepsilon(0)\le u(0)$, we see that
\begin{equation*}
\begin{split}
\limsup_{\varepsilon\to0}J^{\alpha_\varepsilon}[u_\varepsilon,p_\varepsilon](x_\varepsilon)
&\le\limsup_{\varepsilon\to0}\frac{\alpha_\varepsilon(u(0)-u_\varepsilon(x_\varepsilon))+(\alpha_\varepsilon+1)p_\varepsilon x_\varepsilon}{x_\varepsilon^{\alpha_\varepsilon+1}\Gamma(1-\alpha_\varepsilon)}\\
&= J^\alpha[u,p](\hat{x}).
\end{split}
\end{equation*}

(ii) Let $a'\in(0,\hat{x})$ be such that 
$[\hat{x}-2a',\hat{x}+2a']\subset(a,l)$.
We may assume that $x_\varepsilon\in(\hat{x}-a',\hat{x}+a')$ by letting $\varepsilon$ smaller if necessary.
Since $\varphi\in C^2(a,l)$, then for $z\in (0, x_\varepsilon-a)$  we have,
$$
\varphi(x_\varepsilon-z)-\varphi(x_\varepsilon)+\varphi'(x_\varepsilon)z=
z^2\int_0^1\int_0^1 \varphi''(x_\varepsilon -st z)t\,ds dt.
$$
Hence,
\begin{equation}\label{rzalfa}
 K^{\alpha_\varepsilon}_{(0,x_\varepsilon -a)}[\varphi, p_\varepsilon](x_\varepsilon) =
\int_0^{x_\varepsilon-a} \frac 1{z^{\alpha_\varepsilon}}
\int_0^1\int_0^1 \varphi''(x_\varepsilon -st z)t\,ds dt.
\end{equation}
We fix any $\delta>0$, then we find $\eta>0$ such that
\begin{equation}\label{reta}
 \int_{\hat x - a - \eta}^{\hat x - a} \frac{\max\{ \varphi''(\zeta):\ \zeta \in [\hat x - a - \eta,\hat x - a] \}}{z^{\max \alpha_\varepsilon}} < \delta.
\end{equation}
Thus,
\begin{align*}
 & K^{\alpha_\varepsilon}_{(0,x_\varepsilon -a)}[\varphi, p_\varphi](x_\varepsilon) -
 K^{\alpha}_{(0, \hat x-a)}[\varphi, p](\hat x) \\
 =& K^{\alpha_\varepsilon}_{(\hat x - a - \eta,x_\varepsilon -a)}[\varphi, p_\varphi](x_\varepsilon) -
 K^{\alpha}_{(\hat x - a - \eta, \hat x-a)}[\varphi, p](\hat x) \\ &+
 K^{\alpha_\varepsilon}_{(0,\hat x - a - \eta}[\varphi, p_\varphi](x_\varepsilon) -
 K^{\alpha}_{(0, \hat x-a - \eta)}[\varphi, p](\hat x).
\end{align*}
By (\ref{reta}) the first difference above is estimated by  $2\delta$. The second difference is less than $\delta$ for sufficiently small $\varepsilon$ due to (\ref{rzalfa}) and the Lebesgue dominated convergence Theorem. 
Hence, our claim follows.

(iii) By our assumptions, we immediately see that 
\begin{equation}\label{e:lemmabound}
\sup_{0<\varepsilon\ll1}(|u_\varepsilon(x_\varepsilon)|\vee |p_\varepsilon|)\le C
\end{equation}
for a constant $C>0$.
Since $u_\varepsilon\le u$ in $[0,l)$ and $u$ is upper semicontinuous, then
$$
u_\varepsilon(x_\varepsilon-z)\le u(x_\varepsilon-z)\le \max_{[0,a]}u\quad\text{for all $z\in[x_\varepsilon-a,x_\varepsilon]$.}
$$
Thus, we have the following estimate,
\begin{align*}
&(u_\varepsilon(x_\varepsilon-z)-{u}_\varepsilon(x_\varepsilon)-p_\varepsilon z)\mathds{1}_{(x_\varepsilon-a,x_\varepsilon)}(z)z^{-\alpha_\varepsilon-2}\\
\le& (\max_{[0,a]}u+{\max_{[a,l]}u}+Cz)\mathds{1}_{(x_\varepsilon-a,x_\varepsilon)}(z)z^{-\alpha_\varepsilon-2}\\
\le& \left|\max_{[0,a]}u+{\max_{[a,l]}u}+Cz\right|\mathds{1}_{(x_\varepsilon-a,x_\varepsilon)}(z)(z^{-2}\vee z^{-1})\\
\le& \left|\max_{[0,a]}u+{\max_{[a,l]}u}+Cz\right|\mathds{1}_{(a'',l)}(z)(z^{-2}\vee z^{-1})\quad\text{for all $z\in(\delta,l)$,}
\end{align*}
where $\delta = \frac12(\hat{x} -a)$.
Since the right-hand side is integrable on $(\delta ,l)$, Fatou's lemma yields the desired inequality.
\end{proof}

\begin{remark}
(i) A symmetric statement which ``upper semicontinuous" is replaced with ``lower semicontinuous" is true since $J[u,p](x)=-J[-u,-p](x)$.

(ii) In the following sections, we use Lemma \ref{l:important} for functions that also depend on the time variable. We can not use it directly because it is stated for a single variable function. We may state the corresponding result as follows:\\
Let us suppose that $u_\varepsilon, u:[0,l) \times \Lambda \to \mathbb{R}$ are measurable, where $\Lambda\subset \mathbb{R}$ is an interval. If for all $t\in \Lambda$ functions $u_\varepsilon(\cdot,t)$, $ u(\cdot,t)$ satisfy the assumptions of Lemma \ref{l:important}, then the claim holds for $u_\varepsilon(\cdot,t)$, $ u(\cdot,t)$ and all $t\in \Lambda$.
\end{remark}

\section{Definition of a solution}
In this section we propose a notion of a solution of the initial boundary value problem
\begin{equation}\label{e:master}
u_t=(D^\alpha_x u)_x+f\quad\text{in $Q_T$}
\end{equation}
and
\begin{equation}\label{e:initialboundary}
u=g\quad\text{on $\partial_pQ_T$.}
\end{equation}
Here, $Q_T=(0,l)\times(0,T)$ and $\partial_pQ_T$ stands for the parabolic boundary, i.e., $\partial_pQ_T=([0,l]\times\{0\})\cup(\{0,l\}\times[0,T))$. We also introduce $Q_{T,0}:= (0,l)\times [0,T)$.
Throughout this paper, the given functions $f:Q_T\to\mathbb{R}$ and $g:\partial_pQ_T\to\mathbb{R}$ are assumed to be continuous.

To motivate the definition of solutions that we call viscosity solutions, we suppose that $u\in\mathcal{C}(Q_{T,0})$ and $u-\varphi$ attains a maximum over $Q_{T,0}$ at $(\hat{x},\hat{t})\in Q_T$ for a function $\varphi\in\mathcal{C}(Q_{T,0})$.
Here, $\mathcal{C}(Q_{T,0})$ is a space of test functions which we set as
\begin{equation}\label{dfC}
 \mathcal{C}(Q_{T,0})=\{\varphi\in C^{2,1}(Q_T)\cap C(Q_{T,0})\mid \text{$\varphi_x(\cdot,t)\in L^1(0,{l})$ for every $t\in(0,T)$}\}.
\end{equation}
The classical maximum principle and Lemma \ref{l:maximum} yield $u_t=\varphi_t$ and $(D_x^\alpha u)_x\ge(D_x^\alpha\varphi)_x$ at $(\hat{x},\hat{t})$.
Thus, if $u$ satisfies \eqref{e:master} pointwise in $Q_T$, then
$$
\varphi_t(\hat{x},\hat{t})\le(D_x^\alpha\varphi)_x(\hat{x},\hat{t})+f(\hat{x},\hat{t}).
$$
Since this inequality does not include the derivative of $u$, we are tempted to use it to define a generalized subsolution for $u$ which is not differentiable. The opposite inequality comes out if one replaces the maximum with a minimum. 

Let $\Omega$ be a set in $\mathbb{R}^2$.
For a function $w:\Omega\to\mathbb{R}$ let $w^*$ and $w_*$ denote the upper semicontinuous envelope and the lower semicontinuous envelope, respectively.
Namely, 
$$
w^*(z)=\lim_{\delta\to0^+}\sup\{w(\zeta)\mid \zeta\in \Omega\cap\overline{B_\delta(z)}\}
$$
and $w_*=-(-w)^*$.
Here and hereafter, $B_\delta(z)$ is an open ball in $\mathbb{R}^2$ centered at $z$ with radius $\delta$, i.e., $B_\delta(z)=\{\zeta\in\mathbb{R}^d\mid |z-\zeta|<\delta\}$, and $\overline{B_\delta(z)}$ is its closure.

\begin{definition}[Viscosity solution]\label{d:visc}
We say that a real-valued function $u$ on $Q_{T,0}$ is a \emph{viscosity subsolution} (resp. \emph{viscosity supersolution}) of \eqref{e:master} if $u^*<\infty$ (resp. $u_*>-\infty$) in $Q_{T,0}$ and, for every $((\hat{x},\hat{t}),\varphi)\in Q_T\times \mathcal{C}(Q_{T,0})$ that satisfies $\max_{Q_{T,0}}(u^*-\varphi)=(u^*-\varphi)(\hat{x},\hat{t})$ (resp. $\min_{Q_{T,0}}(u_*-\varphi)=(u_*-\varphi)(\hat{x},\hat{t})$),
$$
\varphi_t(\hat{x},\hat{t})\le (D_x^\alpha\varphi)_x(\hat{x},\hat{t})+f(\hat{x},\hat{t})\quad(\text{resp. $\varphi_t(\hat{x},\hat{t})\ge (D_x^\alpha\varphi)_x(\hat{x},\hat{t})+f(\hat{x},\hat{t})$}).
$$
Moreover, we say that a real-valued function $u$ on $Q_T\cup\partial_pQ_T$ is a viscosity subsolution (resp. viscosity supersolution) of \eqref{e:master}-\eqref{e:initialboundary} if $u^*<\infty$ (resp. $u_*>-\infty$) in $Q_T\cup\partial_pQ_T$, $u$ is a viscosity subsolution (resp. viscosity supersolution) of \eqref{e:master}, and satisfies $u^*\le g$ (resp. $u_*\ge g$) on $\partial_pQ_T$.

If a real-valued function $u$ on $Q_{T,0}$ (resp. $Q_T\cup\partial_pQ_T$) is a viscosity sub- and supersolution of \eqref{e:master} (resp. \eqref{e:master}-\eqref{e:initialboundary}), we say that $u$ is a \emph{viscosity solution} of \eqref{e:master} (resp. \eqref{e:master}-\eqref{e:initialboundary}). 
\end{definition}

The notion of viscosity solution by Definition \ref{d:visc} is consistent with that of ``classical solution'' that satisfies \eqref{e:master} pointwise in $Q_T$.

\begin{proposition}[Consistency]\label{p:consistency}
Let $u\in\mathcal{C}(Q_{T,0})$, (see (\ref{dfC}) for the definition of this set).
Then, $u$ is a viscosity solution of \eqref{e:master} if and only if $u$ satisfies \eqref{e:master} pointwise in $Q_T$.
\end{proposition}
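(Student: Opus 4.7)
The plan is to handle the two implications separately, and both directions are essentially direct consequences of Lemma \ref{l:maximum}. Note that $u \in \mathcal{C}(Q_{T,0})$ is continuous, so $u^* = u_* = u$, and for each fixed $t \in (0,T)$ the slice $u(\cdot,t)$ satisfies the hypotheses of Lemma \ref{l:maximum}.

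For the easy direction, suppose $u$ satisfies \eqref{e:master} pointwise. I would pick any test pair $((\hat x, \hat t), \varphi) \in Q_T \times \mathcal{C}(Q_{T,0})$ realizing $\max_{Q_{T,0}}(u - \varphi) = (u-\varphi)(\hat x, \hat t)$. Since $\hat t \in (0,T)$ is interior, I first read off $u_t(\hat x, \hat t) = \varphi_t(\hat x,\hat t)$ from the standard interior maximum condition in $t$. Then I would apply Lemma \ref{l:maximum} to the single-variable function $w(x) := u(x, \hat t) - \varphi(x, \hat t)$, which lies in $C^2(0,l)\cap C[0,l)$ with $w' \in L^1(0,l)$ and achieves its maximum at $\hat x \in (0,l)$. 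This yields $(D^\alpha_x w)_x(\hat x) \le 0$, i.e.\ $(D^\alpha_x u)_x(\hat x, \hat t) \le (D^\alpha_x \varphi)_x(\hat x, \hat t)$. Combining with the pointwise equation gives
$$
\varphi_t(\hat x, \hat t) = u_t(\hat x, \hat t) = (D^\alpha_x u)_x(\hat x,\hat t) + f(\hat x,\hat t) \le (D^\alpha_x \varphi)_x(\hat x, \hat t) + f(\hat x, \hat t),
$$
which is the subsolution inequality. The supersolution inequality is obtained symmetrically by taking a minimum and using the lower-semicontinuous version of Lemma \ref{l:maximum} mentioned in the remark after Proposition \ref{p:propertyofK}.

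For the reverse direction, suppose $u$ is a viscosity solution. Here the key observation is that $u$ itself is an admissible test function, because $u \in \mathcal{C}(Q_{T,0})$. For any $(\hat x, \hat t) \in Q_T$, choosing $\varphi := u$ the function $u - \varphi \equiv 0$ attains its maximum (and minimum) over $Q_{T,0}$ at $(\hat x, \hat t)$ trivially. The subsolution definition then yields
$$
u_t(\hat x, \hat t) \le (D^\alpha_x u)_x(\hat x, \hat t) + f(\hat x, \hat t),
$$
and the supersolution definition yields the reverse inequality. Hence equality holds at $(\hat x, \hat t)$, and since this point was arbitrary in $Q_T$, the function $u$ satisfies \eqref{e:master} pointwise.

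I do not anticipate any real obstacle: the only subtlety is to verify that the one-variable slice $u(\cdot,\hat t) - \varphi(\cdot,\hat t)$ fits the framework of Lemma \ref{l:maximum} (which it does, by the definition \eqref{dfC} of $\mathcal{C}(Q_{T,0})$), and to notice that the converse direction is immediate once one allows $u$ itself as a test function. No estimates on the integrals $J$ and $K$ are needed.
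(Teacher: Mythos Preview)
Your proposal is correct and follows essentially the same approach as the paper: the ``if'' direction is exactly the argument given in the paragraph preceding Definition~\ref{d:visc} (classical maximum principle in $t$ plus Lemma~\ref{l:maximum} in $x$), and the ``only if'' direction is precisely the paper's observation that $u$ itself is an admissible test function. The only minor remark is that your reference to ``the lower-semicontinuous version of Lemma~\ref{l:maximum} mentioned in the remark after Proposition~\ref{p:propertyofK}'' is slightly off---that remark concerns $K_{(\delta,x)}$ rather than a minimum principle---but the intended symmetric argument (applying Lemma~\ref{l:maximum} to $\varphi-u$ at a minimum) is clear and valid.
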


\begin{proof}
We saw the `if' part before Definition \ref{d:visc}.
The `only if' part is straightforward since $u$ can be taken as a test function.
\end{proof}

After establishing the consistency result, we suppress the word ``viscosity'' from now on. 

There are several equivalent definitions of solutions. We utilize these definitions to establish the existence and uniqueness of solutions and some propeties.

\begin{proposition}[Alternative definitions]\label{p:alter}
Let $u$ be a real-valued function on $Q_{T,0}$ with $u^*<+\infty$ in $Q_{T,0}$. Then, the following statements are equivalent:

\begin{itemize}
\item[(i)] $u$ is a subsolution of \eqref{e:master};
\item[(ii)] for every $((\hat{x},\hat{t}),\varphi)\in Q_T\times(C^{2,1}(Q_T)\cap C(Q_{T,0}))$ that satisfies $\max_{Q_{T,0}}(u^*-\varphi)=(u^*-\varphi)(\hat{x},\hat{t})$,
$$
\varphi_t(\hat{x},\hat{t})\le J[\varphi,p](\hat{x},\hat{t})+K_{(0,\hat{x})}[\varphi,p](\hat{x},\hat{t})+f(\hat{x},\hat{t})
$$
holds with $p=\varphi_x(\hat{x},\hat{t})$; 
\item[(iii)] for every $((\hat{x},\hat{t}),\varphi)\in Q_T\times C^{2,1}(Q_T)$ that satisfies $\max_{Q_T}(u^*-\varphi)=(u^*-\varphi)(\hat{x},\hat{t})$, 
\begin{equation*}
\varphi_t(\hat{x},\hat{t}) 
\le J[u^*,p](\hat{x},\hat{t})+K_{(0,\hat{x}-\delta)}[\varphi,p](\hat{x},\hat{t}) 
+K_{(\hat{x}-\delta,\hat{x})}[u^*,p](\hat{x},\hat{t})+f(\hat{x},\hat{t})
\end{equation*}
holds for all $\delta\in(0,\hat{x})$ with $p=\varphi_x(\hat{x},\hat{t})$;
\item[(iv)] for every $((\hat{x},\hat{t}),\varphi)\in Q_T\times C^{2,1}(Q_T)$ that satisfies $\max_{Q_T}(u^*-\varphi)=(u^*-\varphi)(\hat{x},\hat{t})$, $K_{(0,\hat{x})}[u^*,p](\hat{x},\hat{t})$ with $p=\varphi_x(\hat{x},\hat{t})$ is well-defined and
$$
\varphi_t(\hat{x},\hat{t})\le J[u^*,p](\hat{x},\hat{t})+K_{(0,\hat{x})}[u^*,p](\hat{x},\hat{t})+f(\hat{x},\hat{t})
$$
holds.
\end{itemize}
\end{proposition}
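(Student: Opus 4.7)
I'd establish the equivalence by closing the loop $(\mathrm{i})\Rightarrow(\mathrm{ii})\Rightarrow(\mathrm{iii})\Rightarrow(\mathrm{iv})\Rightarrow(\mathrm{i})$. The reverse chain $(\mathrm{iv})\Rightarrow(\mathrm{iii})\Rightarrow(\mathrm{ii})\Rightarrow(\mathrm{i})$ is the easy direction and follows from two structural facts. After normalising $\varphi(\hat x,\hat t)=u^*(\hat x,\hat t)$, the maximum property gives $u^*\le\varphi$ pointwise, whence the monotonicity $J[u^*,p](\hat x,\hat t)\le J[\varphi,p](\hat x,\hat t)$ and $K_{(a,b)}[u^*,p](\hat x,\hat t)\le K_{(a,b)}[\varphi,p](\hat x,\hat t)$ for every $(a,b)\subset(0,\hat x)$. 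Moreover the test-function classes satisfy $\mathcal{C}(Q_{T,0})\subset C^{2,1}(Q_T)\cap C(Q_{T,0})\subset C^{2,1}(Q_T)$ and the maximisation sets $Q_{T,0}\supset Q_T$, so each hypothesis is strictly weaker than the next. Combining these monotonicity inequalities with Proposition~\ref{p:another}, which identifies $(D^\alpha_x\varphi)_x=J[\varphi,p]+K_{(0,\hat x)}[\varphi,p]$ on $\mathcal{C}(Q_{T,0})$, yields the three backward implications in sequence.

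The step $(\mathrm{iii})\Rightarrow(\mathrm{iv})$ is a limit as $\delta\to\hat x^-$ in the (iii)-inequality. Taylor's formula applied to $\varphi\in C^2$ at $\hat x$ shows the integrand of $K_{(0,\hat x-\delta)}[\varphi,p](\hat x,\hat t)$ is $O(z^{-\alpha})$ near $z=0$, so this term tends to $0$. For the second integral, the positive part of $[u^*(\hat x-z,\hat t)-u^*(\hat x,\hat t)+pz]/z^{\alpha+2}$ is controlled by the corresponding $\varphi$-integrand (hence integrable on $(0,\hat x)$), while the (iii)-inequality itself provides a uniform finite lower bound on $K_{(\hat x-\delta,\hat x)}[u^*,p]$ as $\delta\to\hat x^-$, which bounds the negative part. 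Hence $K_{(0,\hat x)}[u^*,p]$ is well-defined, and the limit inequality is exactly (iv).

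The substantial work concerns $(\mathrm{i})\Rightarrow(\mathrm{ii})$ and $(\mathrm{ii})\Rightarrow(\mathrm{iii})$, where the admissible class of test functions must be enlarged. Given $\varphi$ in the broader class, I would construct $\tilde\varphi$ in the narrower class that (a) coincides with $\varphi$ on a neighborhood $U$ of $(\hat x,\hat t)$ contained in $Q_T$, so that $\tilde\varphi_t(\hat x,\hat t)=\varphi_t(\hat x,\hat t)$ and $\tilde\varphi_x(\hat x,\hat t)=p$, and (b) satisfies $\tilde\varphi\ge u^*$ on $Q_{T,0}$, so that $(\hat x,\hat t)$ remains the maximum of $u^*-\tilde\varphi$ on the enlarged domain. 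For $(\mathrm{ii})\Rightarrow(\mathrm{iii})$ the natural extension outside $U$ is a $C^2$-mollification $u^\varepsilon$ of the sup-convolution of $u^*$, glued to $\varphi$ via a smooth cutoff; this $u^\varepsilon$ satisfies $u^\varepsilon\ge u^*$ and $u^\varepsilon\to u^*$ pointwise as $\varepsilon\to 0$. Applying (ii) to $\tilde\varphi$, splitting $K_{(0,\hat x)}[\tilde\varphi,p]$ along $\hat x-\delta$ so that the near piece is exactly $K_{(0,\hat x-\delta)}[\varphi,p]$, and taking $\varepsilon\to 0$ (via dominated convergence on the far portion, where $z\ge\hat x-\delta>0$, together with $J[u^\varepsilon,p](\hat x,\hat t)\to J[u^*,p](\hat x,\hat t)$) reproduces the (iii)-inequality. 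A parallel but easier modification handles $(\mathrm{i})\Rightarrow(\mathrm{ii})$, with an additional smooth taper of the extension enforcing $\tilde\varphi_x(\cdot,t)\in L^1(0,l)$. The main obstacle is to carry out this gluing in the cutoff region so that $\tilde\varphi$ simultaneously lies in the right regularity class, preserves the pointwise bound $\tilde\varphi\ge u^*$, and allows the $\varepsilon\to 0$ limits to identify exactly with the $u^*$-expressions in (iii) without spurious contributions.
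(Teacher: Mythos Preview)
Your plan is essentially the paper's. The backward chain by monotonicity, the limit $(\mathrm{iii})\Rightarrow(\mathrm{iv})$ as $\delta\to\hat x^-$, and the $(\mathrm{ii})\Rightarrow(\mathrm{iii})$ step via a mollified sup-convolution glued to $\varphi$ are all correct and match the paper closely; the paper simply organises the cycle as $(\mathrm{i})\Rightarrow(\mathrm{iii})\Rightarrow(\mathrm{iv})\Rightarrow(\mathrm{ii})\Rightarrow(\mathrm{i})$.

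There is, however, a genuine gap in your step $(\mathrm{i})\Rightarrow(\mathrm{ii})$. The right-hand side in (ii) is $J[\varphi,p]+K_{(0,\hat x)}[\varphi,p]$, which depends on $\varphi(0,\hat t)$ and on $\varphi(\hat x-z,\hat t)$ for \emph{all} $z\in(0,\hat x)$. If $\tilde\varphi$ equals a mollification $u^\varepsilon$ of $u^*$ outside the neighbourhood $U$ (as in your $(\mathrm{ii})\Rightarrow(\mathrm{iii})$ construction), then $\tilde\varphi(0,\hat t)=u^\varepsilon(0,\hat t)\to u^*(0,\hat t)$ and the far portion of $K_{(0,\hat x)}[\tilde\varphi,p]$ tends to $K_{(\hat x-\delta,\hat x)}[u^*,p]$: you land at the (iii)-inequality, not (ii). To obtain (ii) directly you would need $\tilde\varphi_n\in\mathcal{C}(Q_{T,0})$ with $u^*\le\tilde\varphi_n$ on $Q_{T,0}$ and $\tilde\varphi_n\to\varphi$ pointwise on $[0,\hat x]\times\{\hat t\}$, and your sketch does not explain how to do this when $\varphi_x(\cdot,t)\notin L^1$ while preserving the global maximum. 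The simplest fix is to drop this intermediate step altogether: your $(\mathrm{ii})\Rightarrow(\mathrm{iii})$ test function $\tilde\varphi$ already lies in $\mathcal{C}(Q_{T,0})$, because the mollified sup-convolution has bounded spatial derivative near $x=0$. Hence (i) applies to $\tilde\varphi$ directly, and the very same construction proves $(\mathrm{i})\Rightarrow(\mathrm{iii})$ in one stroke---which is exactly what the paper does. (One minor correction there: in the cutoff transition zone you only get $\tilde\varphi\le\varphi$, so the near piece satisfies $K_{(0,\hat x-\delta)}[\tilde\varphi,p]\le K_{(0,\hat x-\delta)}[\varphi,p]$ rather than equality; this is all the inequality requires.)
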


\begin{proof}
The proofs of implications (ii) $\Rightarrow$ (i) and (iv) $\Rightarrow$ (ii) are easy.
In fact, the former is a direct consequence of Proposition \ref{p:another}. To prove the latter, let $((\hat{x},\hat{t}),\varphi)\in Q_T\times (C^{2,1}(Q_T)\cap C(Q_{T,0}))$ be such that $\max_{Q_{T,0}}(u^*-\varphi)=(u^*-\varphi)(\hat{x},\hat{t})$.
Since $K_{(0,\hat{x})}[u^*,p](\hat{x},\hat{t})$ with $p=\varphi_x(\hat{x},\hat{t})$ exists by (iv) and $u^*(\hat{x}-z,\hat{t})-u^*(\hat{x},\hat{t})+pz\le \varphi(\hat{x}-z,\hat{t})-\varphi(\hat{x},\hat{t})+pz$ holds for all $z\in[0,\hat{x}]$, we have
$$
J[u^*,p](\hat{x},\hat{t})+K_{(0,\hat{x})}[u^*,p](\hat{x},\hat{t})\le J[\varphi,p](\hat{x},\hat{t})+K_{(0,\hat{x})}[\varphi,p](\hat{x},\hat{t}).
$$
The desired inequality is immediately obtained from the inequality by (iv).

We shall prove the implication (i) $\Rightarrow$ (iii).
Let $((\hat{x},\hat{t}),\varphi)\in Q_T\times C^{2,1}(Q_T)$ be such that $\max_{Q_T}(u^*-\varphi)=(u^*-\varphi)(\hat{x},\hat{t})$ and fix $\delta\in(0,\hat{x})$ arbitrarily. 
We set $\psi:=\varphi+(u^*-\varphi)(\hat{x},\hat{t})$ so that $\max_{Q_T}(u^*-\psi)=(u^*-\psi)(\hat{x},\hat{t})=0$.

Since $u^*$ is upper semicontinuous in $Q_{T,0}$, there exists a sequence $u_\varepsilon\in C(Q_{T,0})$ such that $u_\varepsilon\searrow u^*$ pointwise in $Q_{T,0}$ as $\varepsilon\to0^+$.
Also, there exists a sequence $\psi_\varepsilon\in\mathcal{C}(Q_{T,0})$ such that $\psi_\varepsilon=\psi$ in $\overline{B_{(\hat{x}-\delta)/2}(\hat{x},\hat{t})}\cap Q_{T,0}$, $u^*\le\psi_\varepsilon\le\psi$ in $B_{\hat{x}-\delta}(\hat{x},\hat{t})\cap Q_{T,0}$, and $u^*\le\psi_\varepsilon\le u_\varepsilon+\varepsilon$ in $Q_{T,0}\setminus B_{\hat{x}-\delta}(\hat{x},\hat{t})$.
Observe that (a) $\psi_\varepsilon=\psi=u^*$ at $(\hat{x},\hat{t})$, (b) $\lim_{\varepsilon\to0}\psi_\varepsilon=u^*$ pointwise in $Q_{T,0}\setminus B_{\hat{x}-\delta}(\hat{x},\hat{t})$, and (c) $\psi_\varepsilon\le\psi=\varphi$ in $B_{\hat{x}-\delta}(\hat{x},\hat{t})$.

It is clear that $\max_{Q_{T,0}}(u^*-\psi_\varepsilon)=(u^*-\psi_\varepsilon)(\hat{x},\hat{t})$.
Thus, by (i) we have
\begin{equation}\label{e:alter1}
(\psi_\varepsilon)_t(\hat{x},\hat{t})\le (D_x^\alpha\psi_\varepsilon)_x(\hat{x},\hat{t})+f(\hat{x},\hat{t}).
\end{equation}
Since $\psi_\varepsilon=\psi$ near $(\hat{x},\hat{t})$, we see that $(\psi_\varepsilon)_t=\psi_t=\varphi_t$ and $(\psi_\varepsilon)_x=\psi_x=\varphi_x$ at $(\hat{x},\hat{t})$.
Proposition \ref{p:another} yields $(D_x^\alpha\psi_\varepsilon)_x(\hat{x},\hat{t})=J[\psi_\varepsilon,p_\varepsilon](\hat{x},\hat{t})+K_{(0,\hat{x})}[\psi_\varepsilon,p_\varepsilon](\hat{x},\hat{t})$ with $p_\varepsilon=(\psi_\varepsilon)_x(\hat{x},\hat{t})$.
Accordingly, \eqref{e:alter1} can be rewritten as
\begin{equation}\label{e:alter2}
\varphi_t(\hat{x},\hat{t})\le J[\psi_\varepsilon,p](\hat{x},\hat{t})+K_{(0,\hat{x})}[\psi_\varepsilon,p](\hat{x},\hat{t})+f(\hat{x},\hat{t}),
\end{equation}
where $p=\psi_x(\hat{x},\hat{t})$.

It is easy to check from (a) and (b) that $\lim_{\varepsilon\to0}J[\psi_\varepsilon,p](\hat{x},\hat{t})=J[u^*,p](\hat{x},\hat{t})$.
Since (a) and (c) imply that 
\begin{align*}
\psi_\varepsilon(\hat{x}-z,\hat{t})-\psi_\varepsilon(\hat{x},\hat{t})+pz
&\le\psi(\hat{x}-z,\hat{t})-\psi(\hat{x},\hat{t})+pz\\
&=\varphi(\hat{x}-z,\hat{t})-\varphi(\hat{x},\hat{t})+pz
\end{align*}
holds for all $z\in(0,\delta)$, then we have $K_{(0,\delta)}[\psi_\varepsilon,p]\le K_{(0,\delta)}[\varphi,p](\hat{x},\hat{t})$. The estimate $\limsup_{\varepsilon\to0}K_{(\delta,\hat{x})}[\psi_\varepsilon,p](\hat{x},\hat{t})\le K_{(\delta,\hat{x})}[u^*,p](\hat{x},\hat{t})$ immediately follows from Lemma \ref{l:important}.
Keeping in mind these estimates, while taking the limit supremum as $\varepsilon\to0$ in \eqref{e:alter2} yields the desired inequality.

We finish the proof of this proposition by showing the implication (iii) $\Rightarrow$ (iv).
Let $((\hat{x},\hat{t}),\varphi)\in Q_T\times C^{2,1}(Q_T)$ be such that $\max_{Q_T}(u^*-\varphi)=(u^*-\varphi)(\hat{x},\hat{t})$.
We define
\begin{align*}
v_\delta(z):=(\varphi(\hat{x}-z,\hat{t})-\varphi(\hat{x},\hat{t})+pz)\mathds{1}_{[0,\hat{x}-\delta]}(z)+(u^*(\hat{x}-z,\hat{t})-u^*(\hat{x},\hat{t})+pz)\mathds{1}_{(\hat{x}-\delta,\hat{x}]}(z)
\end{align*}
for $\delta\in(0,\hat{x})$ and set
$$
K[v_\delta]:=\frac{\alpha(\alpha+1)}{\Gamma(1-\alpha)}\int_0^{\hat{x}}v_\delta(z)\frac{dz}{z^{\alpha+2}}=K_{(0,\hat{x}-\delta)}[\varphi,p](\hat{x},\hat{t})+K_{(\hat{x}-\delta,\hat{x})}[u,p](\hat{x},\hat{t}).
$$
Since $u^*(\hat{x}-z,\hat{t})-u^*(\hat{x},\hat{t})+pz\le \varphi(\hat{x}-z,\hat{t})-\varphi(\hat{x},\hat{t})+pz$ for all $z\in[0,\hat{x})$, we know that $v_\delta^+\searrow v_0^+$ and $v_\delta^-\nearrow v_0^-$ as $\delta\to0$.

By (iii) the inequality
\begin{equation*}
\begin{split}
\varphi_t(\hat{x},\hat{t})&\le J[u^*,p](\hat{x},\hat{t})+K_{(0,\hat{x}-\delta)}[\varphi,p](\hat{x},\hat{t})\\
&\quad+K_{(\hat{x}-\delta,\hat{x})}[u^*,p](\hat{x},\hat{t})+f(\hat{x},\hat{t})
\end{split}
\end{equation*}
holds for all $\delta\in(0,\hat{x})$ with $p=\varphi_x(\hat{x},\hat{t})$.
From this inequality we see
\begin{align*}
0\le K[v_\delta^+]-K[v_\delta^-]+C\le K[v_{\delta'}^+]-K[v_\delta^-]+C\le K[v_{\delta'}^+]+C<+\infty
\end{align*}
for all $\delta<\delta'$, where $C=J[u^*,p](\hat{x},\hat{t})-\varphi_t(\hat{x},\hat{t})$ and $\delta'$ is a fixed constant with $0<\delta'<\hat{x}$. Therefore it turns out that $\lim_{\delta\to0}K[v_\delta^\pm]=K[v_0^\pm]$ by the monotone convergence theorem and $K[v_0^\pm]$ is finite. Thus we get the desired result.
\end{proof}

For $(x,t)\in Q_T$ we denote by $\mathcal{N}_{x,t}$ a family of neighborhoods $N$ of $(x,t)$ in $Q_T$ such that every $N$ includes the line segment between $(x,t)$ and $(y,t)$ whenever $(y,t)\in N$ and $0<y<x$.
Evidently, $Q_T\in\mathcal{N}_{x,t}$ for all $(x,t)\in Q_T$.

\begin{proposition}\label{p:localstrict}
Let $u$ be a real-valued function on $Q_{T,0}$ with $u^*<+\infty$ in $Q_{T,0}$.
Then $u$ is a subsolution of \eqref{e:master} if and only if, for every $((\hat{x},\hat{t}),\varphi)\in Q_T\times C^{2,1}(Q_T)$ and $N\in\mathcal{N}_{\hat{x},\hat{t}}$ such that $u^*-\varphi$ attains a strict maximum on $N$ at $(\hat{x},\hat{t})$ in the sense that $(u^*-\varphi)(x,t)<(u^*-\varphi)(\hat{x},\hat{t})$ for all $(x,t)\in N\setminus\{(\hat{x},\hat{t})\}$,
\begin{align*}
\varphi_t(\hat{x},\hat{t})&\le J[u^*,p](\hat{x},\hat{t})+K_{(0,\hat{x}-\delta)}[\varphi,p](\hat{x},\hat{t})\\
&\quad+K_{(\hat{x}-\delta,\hat{x})}[u^*,p](\hat{x},\hat{t})+f(\hat{x},\hat{t})
\end{align*}
holds for all $\delta\in(0,\hat{x})$ that satisfies $(\delta,\hat{t})\in N$ with $p=\varphi_x(\hat{x},\hat{t})$.
\end{proposition}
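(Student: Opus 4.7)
The plan is to reduce both implications to characterization (iii) of Proposition~\ref{p:alter}. For the ``if'' direction, assume the local strict maximum property and take any $\varphi\in C^{2,1}(Q_T)$ with $\max_{Q_T}(u^*-\varphi)=(u^*-\varphi)(\hat x,\hat t)$. For $\lambda>0$ set $\varphi_\lambda(x,t):=\varphi(x,t)+\lambda(|x-\hat x|^2+(t-\hat t)^2)$. Then $u^*-\varphi_\lambda$ attains a strict maximum at $(\hat x,\hat t)$ on $Q_T\in\mathcal{N}_{\hat x,\hat t}$, so the hypothesis applies for every admissible $\delta$. The perturbation leaves both $\varphi_t(\hat x,\hat t)$ and $p=\varphi_x(\hat x,\hat t)$ unchanged and contributes only the quantity
$$
\lambda\,\frac{\alpha(\alpha+1)}{\Gamma(1-\alpha)}\int_0^{\hat x-\delta}z^{-\alpha}\,dz
$$
to the $K_{(0,\hat x-\delta)}$ term; sending $\lambda\to 0^+$ recovers the inequality of Proposition~\ref{p:alter}~(iii), and so $u$ is a subsolution.

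For the ``only if'' direction, assume $u$ is a subsolution and fix $\varphi$, $N$, $(\hat x,\hat t)$, and $\delta$ as in the statement. By the defining property of $\mathcal{N}_{\hat x,\hat t}$, the horizontal segment $S:=[\delta,\hat x]\times\{\hat t\}$ lies in $N$. I would choose open sets $V'\subset V\subset N$ with $\overline{V'}\subset V$ and $S\subset V'$, a cutoff $\chi\in C^\infty(Q_T;[0,1])$ satisfying $\chi\equiv 1$ on $V'$ and $\operatorname{supp}\chi\subset V$, and set
$$
\tilde\varphi:=\varphi+M(1-\chi),\qquad M>0.
$$
Then $\tilde\varphi\in C^{2,1}(Q_T)$ coincides with $\varphi$ on $V'\supset S$, so $\tilde\varphi_t(\hat x,\hat t)=\varphi_t(\hat x,\hat t)$, $\tilde\varphi_x(\hat x,\hat t)=p$, and $K_{(0,\hat x-\delta)}[\tilde\varphi,p](\hat x,\hat t)=K_{(0,\hat x-\delta)}[\varphi,p](\hat x,\hat t)$ because the latter integrand depends only on the values of $\tilde\varphi(\cdot,\hat t)$ on $[\delta,\hat x]$.

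For $M$ sufficiently large, $u^*-\tilde\varphi$ attains its global maximum on $Q_T$ at $(\hat x,\hat t)$: on $V$ this reduces to the strict maximum of $u^*-\varphi$ on $N\supset V$, while on $Q_T\setminus V$ the extra summand $-M$ drives the difference below $(u^*-\tilde\varphi)(\hat x,\hat t)=(u^*-\varphi)(\hat x,\hat t)$. Applying Proposition~\ref{p:alter}~(iii) to $\tilde\varphi$ and substituting the identities above gives exactly the inequality in the statement. The main obstacle is this last step: a finite $M$ suffices only if $u^*-\varphi$ is bounded above on $Q_T\setminus V$. If this fails a priori, one remedies it by the standard viscosity device of additionally adding to $\tilde\varphi$ a smooth barrier that blows up near $\partial_p Q_T$ while vanishing at $(\hat x,\hat t)$ together with its first-order jet; such a barrier leaves the evaluated quantities above unchanged and forces the global maximum to remain at $(\hat x,\hat t)$.
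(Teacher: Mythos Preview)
Your argument tracks the paper's own closely. The ``if'' direction is identical (perturb by $\lambda(|x-\hat x|^2+(t-\hat t)^2)$ and send $\lambda\to 0$), and for ``only if'' the paper likewise invokes an ``extension'' $\bar\varphi$ of $\varphi$ that agrees with $\varphi$ on $N$ and for which $u^*-\bar\varphi$ attains its maximum over $Q_T$ at $(\hat x,\hat t)$; your cutoff construction $\tilde\varphi=\varphi+M(1-\chi)$ is exactly a concrete way to produce such a $\bar\varphi$, and you are in fact more careful than the paper in flagging the boundedness issue (the paper simply asserts the extension exists).

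There is, however, a genuine slip in your contingency fix. A barrier $b\ge 0$ that vanishes at $(\hat x,\hat t)$ only together with its first-order jet does \emph{not} leave $K_{(0,\hat x-\delta)}[\,\cdot\,,p](\hat x,\hat t)$ unchanged, because that operator reads the test function along the whole segment $\{(\hat x-z,\hat t):0<z<\hat x-\delta\}$. Adding $b$ to $\tilde\varphi$ contributes the extra term
\[
\frac{\alpha(\alpha+1)}{\Gamma(1-\alpha)}\int_0^{\hat x-\delta}\bigl(b(\hat x-z,\hat t)-b(\hat x,\hat t)\bigr)\,\frac{dz}{z^{\alpha+2}}
=\frac{\alpha(\alpha+1)}{\Gamma(1-\alpha)}\int_0^{\hat x-\delta}b(\hat x-z,\hat t)\,\frac{dz}{z^{\alpha+2}}\ge 0,
\]
which enlarges the right-hand side; the inequality you obtain from Proposition~\ref{p:alter}(iii) is then weaker than the one claimed, and you cannot rescue it by scaling $b\to 0$ since that destroys the globalizing property. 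The correct repair is to keep the barrier inside the cutoff: replace $M(1-\chi)$ by $(1-\chi)B$ with $B\in C^{2,1}(Q_T)$, $B\ge 0$, chosen so that $B\ge (u^*-\varphi)-(u^*-\varphi)(\hat x,\hat t)$ on $Q_T$ (such a smooth majorant exists because $u^*-\varphi$ is upper semicontinuous and finite, hence bounded above on each set of a compact exhaustion of $Q_T$). Then $\tilde\varphi=\varphi$ on all of $V'\supset S$, so every evaluated quantity, including $K_{(0,\hat x-\delta)}$, is preserved, while on $Q_T\setminus V$ one has $u^*-\tilde\varphi=u^*-\varphi-B\le(u^*-\varphi)(\hat x,\hat t)$ directly.
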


\begin{proof}
We use Proposition \ref{p:alter} (iii) for proofs of both implications.
We first prove the `if' part.
Let $((\hat{x},\hat{t}),\varphi)\in Q_T\times C^{2,1}(Q_T)$ be such that $\max_{Q_T}(u^*-\varphi)=(u^*-\varphi)(\hat{x},\hat{t})$.
Set $\psi_\varepsilon(x,t)=\varphi(x,t)+\varepsilon(|x-\hat{x}|^2+|t-\hat{t}|^2)$ for a small parameter $\varepsilon>0$.
Then $u^*-\psi_\varepsilon$ attains a strict maximum on $Q_T$ at $(\hat{x},\hat{t})$ so
\begin{equation}\label{e:localstrict}
\begin{split}
(\psi_\varepsilon)_t(\hat{x},\hat{t})&\le J[u^*,p_\varepsilon](\hat{x},\hat{t})+K_{(0,\hat{x}-\delta)}[\psi_\varepsilon,p_\varepsilon](\hat{x},\hat{t})\\
&\quad+K_{(\hat{x}-\delta,\hat{x})}[u^*,p_\varepsilon](\hat{x},\hat{t})+f(\hat{x},\hat{t})
\end{split}
\end{equation}
holds for all $\delta\in(0,\hat{x})$ with $p_\varepsilon=(\psi_\varepsilon)_x(\hat{x},\hat{t})$.
It is straightforward to see that $(\psi_\varepsilon)_t(\hat{x},\hat{t})=\varphi_t(\hat{x},\hat{t})$, $p_\varepsilon=\varphi_x(\hat{x},\hat{t})=:p$, and
$$
K_{(0,\hat{x}-\delta)}[\psi_\varepsilon,p_\varepsilon]=K_{(0,\hat{x}-\delta)}[\varphi,p](\hat{x},\hat{t})+\frac{\alpha(1+\alpha)\hat{x}^{1-\alpha}}{(1-\alpha)\Gamma(1-\alpha)}\varepsilon.
$$ 
Therefore sending $\varepsilon\to0$ in \eqref{e:localstrict} yields the desired inequality.

Let $((\hat{x},\hat{t}),\varphi)\in Q_T\times C^{2,1}(Q_T)$ and $N\in\mathcal{N}_{\hat{x},\hat{t}}$ be such that $u^*-\varphi$ attains a strict maximum on $N$ at $(\hat{x},\hat{t})$.
We denote by $\bar{\varphi}$ an extension of $\varphi$ to $Q_T$ such that $\bar{\varphi}\in C^{2,1}(Q_T)$ and $\max_{Q_T}(u^*-\bar{\varphi})=(u^*-\bar{\varphi})(\hat{x},\hat{t})$.
Noticing that $\bar{\varphi}_t=\varphi_t$ and $\bar{\varphi}_x=\varphi_x$ at $(\hat{x},\hat{t})$, we have
\begin{align*}
\varphi_t(\hat{x},\hat{t})&\le J[u^*,p](\hat{x},\hat{t})+K_{(0,\hat{x}-\delta)}[\bar{\varphi},p](\hat{x},\hat{t})\\
&\quad+K_{(\hat{x}-\delta,\hat{x})}[u^*,p](\hat{x},\hat{t})+f(\hat{x},\hat{t})
\end{align*}
holds for all $\delta\in(0,\hat{x})$ with $p=\varphi_x(\hat{x},\hat{t})$.
Since $\delta$ may be taken so that $(\delta,\hat{t})\in N$ and then $K_{(0,\hat{x}-\delta)}[\bar{\varphi},p](\hat{x},\hat{t})=K_{(0,\hat{x}-\delta)}[\varphi,p](\hat{x},\hat{t})$, this inequality is nothing but the desired one.
\end{proof}

\begin{remark}
(i) Symmetric statements in Propositions \ref{p:alter} and \ref{p:localstrict} hold for a supersolution of \eqref{e:master}.

(ii) Also in Proposition \ref{p:alter} (ii) and (iv) the maximum may be replaced by a strict maximum and in (iv) it may be replaced by a local strict maximum in the sense that, for a neighborhood $N$ of $(\hat{x},\hat{t})$ in $Q_T$,
$$
(u^*-\varphi)(x,t)<(u^*-\varphi)(\hat{x},\hat{t})\quad\text{for all $(x,t)\in N$.}
$$ 
However, in (ii) the locality may not be allowed.
\end{remark}

\begin{proposition}\label{p:terminal}
Assume that $f$ is continuous.
Let $u$ be a subsolution (resp. supersolution) of \eqref{e:master} in $Q_T$.
Then $u$ is a subsolution (resp. supersolution) of \eqref{e:master} in $Q_T^*:=(0,l)\times(0,T]$ provided that $u^*(x,T)<+\infty$ (resp. $u_*(x,T)>-\infty$) for all $x\in(0,l)$.
\end{proposition}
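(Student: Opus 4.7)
I treat the subsolution case; the supersolution case is symmetric. Fix $(\hat x, \hat t) \in Q_T^*$ and an admissible test function $\varphi$ with $(u^* - \varphi)(\hat x, \hat t) = \max(u^* - \varphi)$. If $\hat t < T$ the conclusion is immediate from the subsolution property of $u$ in $Q_T$, so I focus on the terminal case $\hat t = T$. By replacing $\varphi$ with $\varphi + \eta(|x - \hat x|^4 + (T - t)^2)$ for small $\eta > 0$ and letting $\eta \to 0$ at the end, I may assume the maximum is strict at $(\hat x, T)$; the $\eta$-correction has vanishing $\varphi_t, \varphi_x$ at $(\hat x, T)$ and contributes only $O(\eta)$ to $(D^\alpha_x \varphi)_x(\hat x, T)$, which is harmless in the limit.

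The central device is the $x$-independent penalization
$$
\psi_\varepsilon(x, t) := \varphi(x, t) + \frac{\varepsilon}{T - t}, \qquad \varepsilon > 0,
$$
which remains in the admissible class on $Q_{T,0}$. On a small closed box $K = [\hat x - r, \hat x + r] \times [T - r, T] \subset (0, l) \times (0, T]$, the function $u^* - \psi_\varepsilon$ is upper semicontinuous and, thanks to the hypothesis $u^*(\cdot, T) < +\infty$, is driven to $-\infty$ as $t \to T^-$. Hence its supremum over $K \cap Q_T$ is attained at some $(x_\varepsilon, t_\varepsilon) \in K$ with $t_\varepsilon < T$.

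The key claim, and the main technical step, is the two-sided squeeze $(x_\varepsilon, t_\varepsilon) \to (\hat x, T)$ and $\varepsilon/(T - t_\varepsilon) \to 0$ along a subsequence. Upper semicontinuity of $u^*$ gives $\limsup (u^* - \psi_\varepsilon)(x_\varepsilon, t_\varepsilon) \le (u^* - \varphi)(x_0, T)$ at any subsequential limit, the subcase $t_0 < T$ being excluded by a matching lower bound. For the lower bound, pick from the definition of the USC envelope a recovery sequence $(\tilde y_n, \tilde s_n) \in Q_{T,0}$ with $\tilde s_n < T$, $(\tilde y_n, \tilde s_n) \to (\hat x, T)$ and $u(\tilde y_n, \tilde s_n) \to u^*(\hat x, T)$, and extract $\varepsilon_n$ with $\varepsilon_n \le (T - \tilde s_n)^2$; evaluating at this sequence produces $\liminf (u^* - \psi_{\varepsilon_n})(x_{\varepsilon_n}, t_{\varepsilon_n}) \ge (u^* - \varphi)(\hat x, T)$. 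Strict maximality then pins $x_0 = \hat x$, and rearranging the squeeze forces the penalization rate to vanish.

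With the approximate maximizer in hand, a further small quadratic perturbation $\sigma(|x - x_\varepsilon|^2 + (t - t_\varepsilon)^2)$ promotes $(x_\varepsilon, t_\varepsilon)$ to a strict local maximum of $u^* - \psi_\varepsilon$ thus modified, so Proposition \ref{p:localstrict} applies; letting $\sigma \to 0$ then yields
$$
\varphi_t(x_\varepsilon, t_\varepsilon) + \frac{\varepsilon}{(T - t_\varepsilon)^2} \le (D^\alpha_x \varphi)_x(x_\varepsilon, t_\varepsilon) + f(x_\varepsilon, t_\varepsilon),
$$
because $\varepsilon/(T - t)$ is $x$-independent, whence $(D^\alpha_x \psi_\varepsilon)_x = (D^\alpha_x \varphi)_x$. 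Dropping the nonnegative penalty on the left, passing $\varepsilon \to 0$ by continuity of $\varphi_t$, $f$, and $(D^\alpha_x \varphi)_x$ (the last from the ``Moreover'' clause of Lemma \ref{l:important}), and finally $\eta \to 0$, delivers the subsolution inequality at $(\hat x, T)$. The main obstacle is precisely the squeeze above: without $u^*(\cdot, T) < +\infty$ the penalty fails to push maximizers off $t = T$, and without the recovery sequence drawn from $Q_{T,0}$ (where $u$ is actually defined) no matching lower bound is available, since $u^*$ is only upper — not lower — semicontinuous.
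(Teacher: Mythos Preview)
Your penalization $\psi_\varepsilon=\varphi+\varepsilon/(T-t)$ and the convergence argument for the approximate maximizers are essentially the same device the paper uses, and your squeeze argument (recovery sequence from $Q_{T,0}$, matching lower bound, forcing $\varepsilon/(T-t_\varepsilon)\to0$) is correct. The gap is in the step where you invoke Proposition~\ref{p:localstrict}.

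Because you take the maximum of $u^*-\psi_\varepsilon$ only over the small box $K$, the point $(x_\varepsilon,t_\varepsilon)$ is in general only a \emph{local} maximum, not a global one over $Q_{T,0}$. Proposition~\ref{p:localstrict} then yields the form~(iii) inequality
\[
(\psi_\varepsilon)_t(x_\varepsilon,t_\varepsilon)\le J[u^*,p_\varepsilon](x_\varepsilon,t_\varepsilon)+K_{(0,x_\varepsilon-\delta)}[\psi_\varepsilon,p_\varepsilon](x_\varepsilon,t_\varepsilon)+K_{(x_\varepsilon-\delta,x_\varepsilon)}[u^*,p_\varepsilon](x_\varepsilon,t_\varepsilon)+f(x_\varepsilon,t_\varepsilon),
\]
with the restriction $(\delta,t_\varepsilon)\in K$, and \emph{not} the Definition~\ref{d:visc} inequality $\varphi_t\le (D^\alpha_x\varphi)_x+f$ that you display. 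The right-hand side contains the nonlocal terms $J[u^*,\cdot]$ and $K_{(\cdot)}[u^*,\cdot]$, which depend on $u^*$ away from the box; you cannot replace them by $(D^\alpha_x\varphi)_x$ unless $u^*\le\psi_\varepsilon+{\rm const}$ holds globally, which your local maximum over $K$ does not provide. Consequently, the final limit cannot be taken via the ``Moreover'' clause of Lemma~\ref{l:important} (continuity of $(D^\alpha_x\varphi)_x$); you need all three parts (i)--(iii) of that lemma to handle $J[u^*_\varepsilon,\cdot]$, $K_{(0,\cdot)}[\varphi,\cdot]$, and $K_{(\cdot)}[u^*,\cdot]$ separately.

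This is exactly how the paper proceeds: it targets the form~(iii) inequality \eqref{e:terminal} from the outset, asserts (as a standard fact) a global maximum of $u^*-\varphi_\varepsilon$ over $Q_T^*$, applies the subsolution property in the form of Proposition~\ref{p:alter}(iii), and passes to the limit with Lemma~\ref{l:important}. Your argument becomes correct once you keep the form~(iii) inequality after Proposition~\ref{p:localstrict} and use Lemma~\ref{l:important} in full; the attempt to collapse everything to $(D^\alpha_x\varphi)_x$ is where it goes wrong.
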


\begin{proof}
We only prove for subsolutions.
It suffices to show that
\begin{equation}\label{e:terminal}
\varphi_t(\hat{x},T)\le J[u^*,p](\hat{x},T)+K_{(0,\hat{x}-\delta)}[\varphi,p](\hat{x},T)+K_{(\hat{x}-\delta,\hat{x})}[u^*,p](\hat{x},T)+f(\hat{x},T)
\end{equation}
holds for all $\delta\in(0,\hat{x})$ with $p=\varphi_x(\hat{x},\hat{t})$ whenever $u^*-\varphi$ attains a strict maximum on $Q_T^*$ at $(\hat{x},T)$ with $0<\hat{x}<l$ for $\varphi\in  C^{2,1}(Q_{T,0}^*)$, where $Q_{T,0}^*=[0,l)\times(0,T]$.
Fix $\delta\in(0,\hat{x})$ arbitrarily.

For $\varepsilon>0$ we define $\varphi_\varepsilon(x,t):=\varphi(x,t)+\varepsilon/(T-t)$.
It is a standard fact (see, e.g., the proof of \cite[Theorem 3.2.10]{Giga}) that there is a maximum point $(x_\varepsilon,t_\varepsilon)\in Q_T$ of $u^*-\varphi_\varepsilon$ on $Q_T^*$ and
$$
\lim_{\varepsilon\to0}(x_\varepsilon,t_\varepsilon,u^*(x_\varepsilon,t_\varepsilon))=(\hat{x},\hat{t},u^*(\hat{x},\hat{t})).
$$
We may assume that $\delta<\inf_\varepsilon x_\varepsilon$ by restricting to smaller $\varepsilon$.
Since $u$ is a subsolution of \eqref{e:master} in $Q_T$, we have
\begin{align*}
(\varphi_\varepsilon)_t(x_\varepsilon,t_\varepsilon)\le &J[u^*,p_\varepsilon](x_\varepsilon,t_\varepsilon)+K_{(0,x_\varepsilon-\delta)}[\varphi_\varepsilon,p_\varepsilon](x_\varepsilon,t_\varepsilon)\\
&+K_{(x_\varepsilon-\delta,x_\varepsilon)}[u^*,p_\varepsilon](x_\varepsilon,t_\varepsilon)+f(x_\varepsilon,t_\varepsilon),
\end{align*}
where $p_\varepsilon=(\varphi_\varepsilon)_x(x_\varepsilon,t_\varepsilon)$.
Since $(\varphi_\varepsilon)_t(x_\varepsilon,t_\varepsilon)\ge \varphi_t(x_\varepsilon,t_\varepsilon)$ and $p_\varepsilon=\varphi_x(x_\varepsilon,t_\varepsilon)$, sending $\varepsilon\to0$ using Lemma \ref{l:important} yields \eqref{e:terminal}.
\end{proof}

\begin{remark}\label{r:terminal}
Also for each statements in Proposition \ref{p:alter}, $Q_T$ and $Q_{T,0}$ may be replaced by $Q_T^*$ and $Q_{T,0}^*$, respectively.
\end{remark}

\begin{remark}
We cannot offer examples of explicit solution satisfying the homogeneous Dirichlet boundary condition 
\begin{equation}\label{e:initialboundary2}
u(0,t)=u(l,t)=0\quad\text{for $t\in[0,T)$.}
\end{equation}
However,  in a recent study, \cite{Rys}, the author showed that the operator $(D^\alpha_x)_x$ defined on $D((D^\alpha_x)_x)\subset L^2(0,l)$, whose elements satisfy the mixed boundary data,
$$
u_x(0)=u(l)=0
$$
generates an analytic semigroup.
\end{remark}

\section{Comparison principle}
We shall establish uniqueness of solutions via the comparison principle.

\begin{theorem}\label{t:comparison}
Let $u$ be a subsolution while $v$ be a supersolution of \eqref{e:master}.
If $-\infty<u^*\le v_*<+\infty$ on $\partial_p Q_T$, then $u^*\le v_*$ in $Q_T\cup\partial_p Q_T$.
\end{theorem}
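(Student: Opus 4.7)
The plan is to argue by contradiction via a doubling-of-variables method adapted to the one-sided fractional operator. Assume $M := \sup_{Q_T \cup \partial_p Q_T}(u^* - v_*) > 0$; since $u^* \leq v_*$ on $\partial_p Q_T$, this supremum is attained at some point of $Q_T^* = (0,l) \times (0,T]$, with $u, v$ extended to sub/supersolutions up to $t = T$ via Proposition \ref{p:terminal}. For small $\varepsilon, \lambda > 0$, introduce
$$
\Phi_{\varepsilon, \lambda}(x, y, t, s) = u^*(x, t) - v_*(y, s) - \frac{(x - y)^2}{\varepsilon} - \frac{(t - s)^2}{\varepsilon} - \frac{\lambda}{T - t},
$$
which by upper semicontinuity and boundedness attains its maximum at some $(\hat x, \hat y, \hat t, \hat s)$. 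Along a subsequence as $\varepsilon \to 0$ (with $\lambda$ fixed and small enough to keep the maximum positive), standard penalty arguments give $(\hat x - \hat y)^2/\varepsilon + (\hat t - \hat s)^2/\varepsilon \to 0$ and $(\hat x, \hat t), (\hat y, \hat s) \to (\bar x, \bar t)$ for some $\bar x \in (0, l)$ and $\bar t \in (0, T)$: the time penalty pushes $\hat t$ away from $T$, and the boundary hypothesis combined with positivity of the limit excludes $\bar x \in \{0, l\}$ and $\bar t = 0$.

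Apply Proposition \ref{p:alter}~(iv), together with Remark \ref{r:terminal}. The test function $\varphi(x, t) = (x - \hat y)^2/\varepsilon + (t - \hat s)^2/\varepsilon + \lambda/(T - t)$ touches $u^*$ from above at $(\hat x, \hat t)$, yielding
$$
\varphi_t(\hat x, \hat t) \leq J[u^*, p](\hat x, \hat t) + K_{(0, \hat x)}[u^*, p](\hat x, \hat t) + f(\hat x, \hat t),
$$
with $p := \varphi_x(\hat x, \hat t) = 2(\hat x - \hat y)/\varepsilon$. The function $\psi(y, s) = -(\hat x - y)^2/\varepsilon - (\hat t - s)^2/\varepsilon$ touches $v_*$ from below at $(\hat y, \hat s)$ with $\psi_y(\hat y, \hat s) = p$, and subtracting the two inequalities gives
$$
\frac{\lambda}{(T - \hat t)^2} \leq \bigl(J[u^*, p](\hat x, \hat t) - J[v_*, p](\hat y, \hat s)\bigr) + \bigl(K_{(0, \hat x)}[u^*, p](\hat x, \hat t) - K_{(0, \hat y)}[v_*, p](\hat y, \hat s)\bigr) + \bigl(f(\hat x, \hat t) - f(\hat y, \hat s)\bigr).
$$

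The key nonlocal estimate exploits the maximality of $\Phi_{\varepsilon, \lambda}$: evaluating it at $(\hat x - z, \hat y - z, \hat t, \hat s)$ for $z \in (0, \min\{\hat x, \hat y\})$ keeps $x - y$ constant, cancelling the $(x - y)^2$ penalty, and since $p = q$ the $pz$ terms also cancel upon subtraction, yielding
$$
[u^*(\hat x - z, \hat t) - u^*(\hat x, \hat t) + pz] - [v_*(\hat y - z, \hat s) - v_*(\hat y, \hat s) + pz] \leq 0.
$$
Integration against $z^{-\alpha - 2}\,dz$ over $(0, \min\{\hat x, \hat y\})$ then gives $K_{(0, \min)}[u^*, p](\hat x, \hat t) \leq K_{(0, \min)}[v_*, p](\hat y, \hat s)$. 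The ``tail'' contribution $K_{(\min, \max)}$, present on only one side, is controlled from the uniform boundedness of $u^*, v_*$ together with $|p| \cdot |\hat x - \hat y| \to 0$ (a consequence of the refined penalty bound $(\hat x - \hat y)^2/\varepsilon \to 0$), so it vanishes as $\varepsilon \to 0$. For the $J$-term, direct expansion and the boundary hypothesis $u^*(0, \bar t) \leq v_*(0, \bar t)$ combined with semicontinuity give $\limsup_{\varepsilon \to 0}(J[u^*, p](\hat x, \hat t) - J[v_*, p](\hat y, \hat s)) \leq -\alpha(u^*(\bar x, \bar t) - v_*(\bar x, \bar t))/(\bar x^{\alpha+1} \Gamma(1-\alpha)) < 0$, while continuity of $f$ kills the last difference. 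Combined with $\lambda/(T - \hat t)^2 \geq \lambda/T^2 > 0$ on the left-hand side, this produces the contradiction.

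The principal obstacle is the tail estimate on $K$, because the gradient $p = 2(\hat x - \hat y)/\varepsilon$ generically blows up of order $1/\sqrt{\varepsilon}$; the apparently divergent contribution $pz \cdot z^{-\alpha - 2}$ on the short interval $(\min\{\hat x, \hat y\}, \max\{\hat x, \hat y\})$ is tamed only by invoking the sharper penalty bound $(\hat x - \hat y)^2/\varepsilon \to 0$, not mere boundedness, which forces $|p| \cdot |\hat x - \hat y| \to 0$.
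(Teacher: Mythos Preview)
Your argument is correct and follows the same doubling-of-variables scheme as the paper: the parallel shift $(\hat x,\hat y)\mapsto(\hat x-z,\hat y-z)$ handles the $K$-difference, the $J$-term supplies the strict negativity via the boundary hypothesis at $x=0$, and the refined bound $|\hat x-\hat y|^2/\varepsilon\to 0$ tames the $p$-dependent pieces. The only differences are cosmetic: the paper restricts to $T'<T$ rather than adding a $\lambda/(T-t)$ penalty (making your appeal to Proposition~\ref{p:terminal} unnecessary), and it splits $K$ at a fixed $\delta<\min\{\hat x,\hat y\}$ rather than at $\min\{\hat x,\hat y\}$ itself, which then requires a separate dominated-convergence estimate on the outer pieces $(\delta,x_\varepsilon)$ and $(\delta,y_\varepsilon)$; your splitting is in fact slightly more direct.
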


\begin{proof}
We fix any $T'$ smaller than $T$.
It is enough to prove that $u^*\le v_*$ in $\overline{Q_{T'}}$, when $u^*\le v_*$ on $\partial_pQ_{T'}^*$.
This can be shown by using the conventional doubling variable technique.

We denote $T'$ by $T$ again. We assume that $u$ is a subsolution (respectively, $v$ is a supersolution) of \eqref{e:master} in $Q_T^*$,  and $-\infty<u^*,v_*<+\infty$ in $\overline{Q_T}$. 
Let us suppose the contrary, i.e., $\theta:=\max_{\overline{Q_T}}(u^*-v_*)>0$.

We define a real-valued function $\Phi$ on $\overline{Q_T}\times \overline{Q_T}$ by
$$
\Phi(x,t,y,s):=u^*(x,t)-v_*(y,s)-\frac{|x-y|^2+|t-s|^2}{\varepsilon},
$$
where $\varepsilon>0$ is a parameter.
Since the function $\Phi$ is upper semicontinuous on the compact set $\overline{Q_T}\times \overline{Q_T}$, it attains a maximum at some point $(x_\varepsilon,t_\varepsilon,y_\varepsilon,s_\varepsilon)\in\overline{Q_T}\times \overline{Q_T}$.
We claim that this points are in $Q_T^*\times Q_T^*$ for sufficiently small $\varepsilon$.
To see this, we shall check that, possibly after extracting a subsequence which is not relabled,
\begin{equation}\label{e:comp1}
\lim_{\varepsilon\to0}(x_\varepsilon,t_\varepsilon,y_\varepsilon,s_\varepsilon,u^*(x_\varepsilon,t_\varepsilon),v_*(y_\varepsilon,s_\varepsilon))=(\hat{x},\hat{t},\hat{x},\hat{t},u^*(\hat{x},\hat{t}),v_*(\hat{x},\hat{t})),
\end{equation}
where $(\hat{x},\hat{t})$ is a point such that $(u^*-v_*)(\hat{x},\hat{t})=\theta$.
The convergence of the point sequence is established according to \cite[Lemma 3.1]{CrandallIshiiLions}.
We can select another subsequence (not relabled) such that $\lim_{\varepsilon\to0} v_*(y_\varepsilon,s_\varepsilon)= \liminf_{\varepsilon\to0}v_*(y_\varepsilon,s_\varepsilon)$.
Then, due to the lower semicontinuity of $v_*$, we have
\begin{eqnarray*}
(u^* -v_*)(\hat{x},\hat{t})& = &
\lim_{\varepsilon\to0}(u^*(x_\varepsilon,t_\varepsilon) - v_*(y_\varepsilon,s_\varepsilon)) =
\lim_{\varepsilon\to0}u^*(x_\varepsilon,t_\varepsilon) - \lim_{\varepsilon\to0} v_*(y_\varepsilon,s_\varepsilon)\\
&\le &\lim_{\varepsilon\to0}u^*(x_\varepsilon,t_\varepsilon) - v_*(\hat{x},\hat{t}).
\end{eqnarray*}
This implies that  $\liminf_{\varepsilon\to0}u^*(x_\varepsilon,t_\varepsilon)\ge u^*(\hat{x},\hat{t})$, hence 
$$
\lim_{\epsilon\to 0} u^*(x_\varepsilon,t_\varepsilon) =u^*(\hat{x},\hat{t}) 
$$
follows.
By the same method, the convergence of $v_*(y_\varepsilon,s_\varepsilon)$ is also established.
Now, we see that $(\hat{x},\hat{t})\not\in\partial_pQ_T^*$; otherwise, $(u^*-v_*)(\hat{x},\hat{t})\le0$ by the assumption but this is contradictory since $\theta>0$.
Therefore our claim is proved.

We also claim that  
\begin{equation}\label{e:comp1.5}
\lim_{\varepsilon\to0}\frac{|x_\varepsilon-y_\varepsilon|^2}{\varepsilon}=0.
\end{equation}
Since $(x_\varepsilon,t_\varepsilon,y_\varepsilon,s_\varepsilon)$ is a maximum point of $\Phi$, we have $\Phi(x_\varepsilon,t_\varepsilon,y_\varepsilon,s_\varepsilon)\ge \Phi(\hat{x},\hat{t},\hat{x},\hat{t})$, that is,
\begin{equation*}
\frac{|x_\varepsilon-y_\varepsilon|^2+|t_\varepsilon-s_\varepsilon|^2}{\varepsilon}\le u^*(x_\varepsilon,t_\varepsilon)-v_*(y_\varepsilon,s_\varepsilon)-u^*(\hat{x},\hat{t})+v_*(\hat{x},\hat{t}).
\end{equation*}
The right-hand side vanishes as $\varepsilon\to0$ due to \eqref{e:comp1}, so as the left-hand side.

Since $(x,t)\mapsto\Phi(x,t,y_\varepsilon,s_\varepsilon)$ attains a maximum on $Q_T^*$ at $(x_\varepsilon,t_\varepsilon)$, by Proposition \ref{p:alter} (iv) and Proposition \ref{p:terminal} (see also Remark \ref{r:terminal}), we see that $K_{(0,x_\varepsilon)}[u^*,p_\varepsilon](x_\varepsilon,t_\varepsilon)$ with $p_\varepsilon=2(x_\varepsilon-y_\varepsilon)/\varepsilon$ is well-defined and
\begin{equation}\label{e:comp3}
\frac{2(t_\varepsilon-s_\varepsilon)}{\varepsilon}\le J[u^*,p_\varepsilon](x_\varepsilon,t_\varepsilon)+K_{(0,x_\varepsilon)}[u^*,p_\varepsilon](x_\varepsilon,t_\varepsilon)+f(x_\varepsilon,t_\varepsilon).
\end{equation}
Similarly, since $(y,s)\mapsto -\Phi(x_\varepsilon,t_\varepsilon,y,s)$ attains a minimum on $Q_T^*$ at $(y_\varepsilon,s_\varepsilon)$, then $K_{(0,y_\varepsilon)}[v_*,p_\varepsilon](y_\varepsilon,s_\varepsilon)$ is well-defined and
\begin{equation}\label{e:comp4}
\frac{2(t_\varepsilon-s_\varepsilon)}{\varepsilon}\ge J[v_*,p_\varepsilon](y_\varepsilon,s_\varepsilon)+K_{(0,y_\varepsilon)}[v_*,p_\varepsilon](y_\varepsilon,s_\varepsilon)+f(y_\varepsilon,s_\varepsilon).
\end{equation}
Subtracting \eqref{e:comp4} from \eqref{e:comp3} yields
\begin{equation}\label{e:comp5}
\begin{split}
0
&\le J[u^*,p_\varepsilon](x_\varepsilon,t_\varepsilon)-J[v_*,p_\varepsilon](y_\varepsilon,s_\varepsilon)\\
&\quad+K_{(0,x_\varepsilon)}[u^*,p_\varepsilon](x_\varepsilon,t_\varepsilon)-K_{(0,y_\varepsilon)}[v_*,p_\varepsilon](y_\varepsilon,s_\varepsilon)\\
&\quad+f(x_\varepsilon,t_\varepsilon)-f(y_\varepsilon,s_\varepsilon).
\end{split}
\end{equation}
We are going to take a limit as $\varepsilon\to0$ in this inequality in order to obtain a contradiction.
For this purpose we estimate 
$$
J_\varepsilon:=J[u^*,p_\varepsilon](x_\varepsilon,t_\varepsilon)-J[v_*,p_\varepsilon](y_\varepsilon,s_\varepsilon)
$$
and
$$
K_\varepsilon:=K_{(0,x_\varepsilon)}[u^*,p_\varepsilon](x_\varepsilon,t_\varepsilon)-K_{(0,y_\varepsilon)}[v_*,p_\varepsilon](y_\varepsilon,s_\varepsilon)
$$
as $\varepsilon\to0$.
We note that it is not possible to apply Lemma \ref{l:important} because $p_\varepsilon$ need not be bounded.
In what follows let $\delta>0$ be a constant with $\delta<\inf_\varepsilon(x_\varepsilon\wedge y_\varepsilon)$. 

Since $u^*$ and $-v_*$ are upper semicontinuous and we have \eqref{e:comp1}, then
\begin{align*}
&\limsup_{\varepsilon\to0}\left(\frac{u^*(0,t_\varepsilon)-u^*(x_\varepsilon,t_\varepsilon)}{x_\varepsilon^{\alpha+1}}-\frac{v_*(0,s_\varepsilon)-v_*(y_\varepsilon,s_\varepsilon)}{y_\varepsilon^{\alpha+1}}\right)\\
&\le \frac{(u^*-v_*)(0,\hat{t})-(u^*-v_*)(\hat{x},\hat{t})}{\hat{x}^{\alpha+1}}\le\frac{-\theta}{\hat{x}^{\alpha+1}}.
\end{align*}
The last inequality is due to the boundary conditions and the definition of $\theta$.
By the inequality
$$
\left|\frac{1}{x^\alpha}-\frac{1}{y^\alpha}\right|\le\frac{\alpha}{\delta^{\alpha+1}}|x-y|\quad\text{for all $x,y\in(\delta,\infty)$,}
$$
and \eqref{e:comp1.5}, we also see that
$$
\lim_{\varepsilon\to0}\left(\frac{p_\varepsilon}{x_\varepsilon^\alpha}-\frac{p_\varepsilon}{y_\varepsilon^\alpha}\right)
\le \frac{\alpha}{\delta^{\alpha+1}}\lim_{\varepsilon\to0}|p_\varepsilon||x_\varepsilon-y_\varepsilon|=\frac{\alpha}{\delta^{\alpha+1}}\lim_{\varepsilon\to0}\frac{|x_\varepsilon-y_\varepsilon|^2}{\varepsilon}=0.
$$
Thus these estimates give $\limsup_{\varepsilon\to0}J_\varepsilon\le-\theta/\hat{x}^{\alpha+1}$.

We estimate $K_\varepsilon$ as $\varepsilon\to0$ from 
$$
K_{\varepsilon,1}:=K_{(0,\delta)}[u^*,p_\varepsilon](x_\varepsilon,t_\varepsilon)-K_{(0,\delta)}[v_*,p_\varepsilon](y_\varepsilon,s_\varepsilon)
$$
and
$$
K_{\varepsilon,2}:=K_{(\delta,x_\varepsilon)}[u^*,p_\varepsilon](x_\varepsilon,t_\varepsilon)-K_{(\delta,y_\varepsilon)}[v_*,p_\varepsilon](y_\varepsilon,s_\varepsilon).
$$
We know that $\Phi(x_\varepsilon,t_\varepsilon,y_\varepsilon,s_\varepsilon)\ge\Phi(x_\varepsilon-z,t_\varepsilon,y_\varepsilon-z,s_\varepsilon)$ for all $z\in[0,\delta]$, because $(x_\varepsilon,t_\varepsilon,y_\varepsilon,s_\varepsilon)$ is a maximum point of $\Phi$.
From this observation, the estimate $K_{\varepsilon,1}\le0$ follows immediately.
By the upper semicontinuity of $u^*$ and \eqref{e:comp1} we see that $u^*\le \max_{\overline{Q_T}}u^*$ in $Q_T^*$ and $u^*(x_\varepsilon,t_\varepsilon)\ge u^*(\hat{x},\hat{t})-C$,  for sufficiently small $\varepsilon$, where $C$ is a positive constant. Thus,
\begin{align*}
[u^*(x_\varepsilon-z,t_\varepsilon)-u^*(x_\varepsilon,t_\varepsilon)]\mathds{1}_{(\delta,x_\varepsilon)}(z)
&\le \left[\max_{\overline{Q_T}}u^*-u^*(\hat{x},\hat{t})+C\right]\mathds{1}_{(\delta,x_\varepsilon)}(z)\\
&\le\left|\max_{\overline{Q_T}}u^*-u^*(\hat{x},\hat{t})+C\right|\mathds{1}_{(\delta,l)}(z)
\end{align*}
for all $z\in(0,l)$ such that $x_\varepsilon >z $.
The right-hand side multiplied by $z^{-\alpha-2}$ is integrable on $(0,l)$ so the Lebesgue dominated convergence theorem implies
\begin{equation}\label{e:comp6}
\limsup_{\varepsilon\to0}\int_\delta^{x_\varepsilon}[u^*(x_\varepsilon-z,t_\varepsilon)-u^*(x_\varepsilon,t_\varepsilon)]\frac{dz}{z^{\alpha+2}}\le \int_\delta^{\hat{x}}[u^*(\hat{x}-z,\hat{t})-u^*(\hat{x},\hat{t})]\frac{dz}{z^{\alpha+2}}.
\end{equation}
By a symmetric argument, by Fatou's lemma we also have
\begin{equation}\label{e:comp7}
\liminf_{\varepsilon\to0}\int_\delta^{y_\varepsilon}[v_*(y_\varepsilon-z,s_\varepsilon)-v_*(y_\varepsilon,s_\varepsilon)]\frac{dz}{z^{\alpha+2}}\ge \int_\delta^{\hat{x}}[v_*(\hat{x}-z,\hat{t})-v_*(\hat{x},\hat{t})]\frac{dz}{z^{\alpha+2}}.
\end{equation}
It would not be difficult to see that
\begin{align*}
\limsup_{\varepsilon\to0}\int_{y_\varepsilon}^{x_\varepsilon}\frac{p_\varepsilon}{z^{\alpha+1}}dz
\le \limsup_{\varepsilon\to0}\frac{p_\varepsilon}{(x_\varepsilon\wedge y_\varepsilon)^{\alpha+1}}\int_{y_\varepsilon}^{x_\varepsilon}dz
=\limsup_{\varepsilon\to0}\frac{(x_\varepsilon-y_\varepsilon)^2}{\varepsilon(x_\varepsilon\wedge y_\varepsilon)^{\alpha+1}}=0.
\end{align*}
Combining this with \eqref{e:comp6} and \eqref{e:comp7} we find that
\begin{align*}
\limsup_{\varepsilon\to0}K_{\varepsilon,2}
&=\limsup_{\varepsilon\to0}C_\alpha\left(\int_\delta^{x_\varepsilon}[u^*(x_\varepsilon-z,t_\varepsilon)-u^*(x_\varepsilon,t_\varepsilon)]\frac{dz}{z^{\alpha+2}}\right.\\
&\quad\left.-\int_\delta^{y_\varepsilon}[v_*(y_\varepsilon-z,s_\varepsilon)-v_*(y_\varepsilon,s_\varepsilon)]\frac{dz}{z^{\alpha+2}}+\int_{y_\varepsilon}^{x_\varepsilon}\frac{p_\varepsilon}{z^{\alpha+1}}dz\right)\\
&\le C_\alpha\int_\delta^{\hat{x}}[(u^*-v_*)(\hat{x}-z,\hat{t})-(u^*-v_*)(\hat{x},\hat{t})]\frac{dz}{z^{\alpha+2}}\le0,
\end{align*}
where $C_\alpha=\alpha(\alpha+1)/\Gamma(1-\alpha)$. 
Therefore we get the estimate $\limsup_{\varepsilon\to0}K_\varepsilon\le0$.

Taking the limit supremum in \eqref{e:comp5} as $\varepsilon\to0$ yields
$$
0\le-\frac{\theta}{\hat{x}^{\alpha+1}},
$$
a contradiction since $\theta>0$.
\end{proof}

The uniqueness of solutions is the direct consequence of Theorem \ref{t:comparison}.
\begin{corollary}[Uniqueness of solutions]
Let $u$ and $v$ be solutions of \eqref{e:master}-\eqref{e:initialboundary}. Then $u\equiv v$ in $Q_T\cup\partial_pQ_T$.
\end{corollary}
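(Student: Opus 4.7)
The plan is to deduce the corollary as an immediate consequence of the comparison principle, Theorem \ref{t:comparison}, by applying it twice with the roles of $u$ and $v$ interchanged. The only preliminary observation is to record the boundary behavior encoded in Definition \ref{d:visc}: since $u$ is in particular a subsolution and $v$ a supersolution of \eqref{e:master}--\eqref{e:initialboundary}, we have $u^{*}\le g$ and $v_{*}\ge g$ on $\partial_{p}Q_{T}$, so
\[
u^{*}\le g\le v_{*}\quad\text{on }\partial_{p}Q_{T}.
\]
In particular $-\infty<u^{*}\le v_{*}<+\infty$ on $\partial_{p}Q_{T}$, which is exactly the hypothesis of Theorem \ref{t:comparison}. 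Applying the theorem therefore yields $u^{*}\le v_{*}$ throughout $Q_{T}\cup\partial_{p}Q_{T}$.

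Next I would swap the roles: $v$ is a subsolution and $u$ is a supersolution of \eqref{e:master}--\eqref{e:initialboundary}, and the same boundary inequalities give $v^{*}\le g\le u_{*}$ on $\partial_{p}Q_{T}$. A second invocation of Theorem \ref{t:comparison} yields $v^{*}\le u_{*}$ in $Q_{T}\cup\partial_{p}Q_{T}$. Combining the two inequalities with the trivial bounds $u_{*}\le u^{*}$ and $v_{*}\le v^{*}$, we obtain the chain
\[
u^{*}\le v_{*}\le v^{*}\le u_{*}\le u^{*},
\]
so every inequality is an equality. Since $u$ is real valued and $u_{*}\le u\le u^{*}$, this forces $u=u_{*}=u^{*}$, and analogously $v=v_{*}=v^{*}$; in particular both solutions are continuous. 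The equality $u^{*}=v_{*}$ then reads $u\equiv v$ in $Q_{T}\cup\partial_{p}Q_{T}$.

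There is essentially no obstacle beyond correctly quoting Theorem \ref{t:comparison}; the proof is a two-line packaging argument. The only point one has to be slightly careful about is the placement of envelopes, namely remembering that the definition of a sub/supersolution of the initial--boundary value problem compares $u^{*}$ (resp.\ $u_{*}$) to $g$ on $\partial_{p}Q_{T}$, which is precisely what makes the hypothesis of the comparison principle available in both directions without any additional regularity assumption on $u$ or $v$.
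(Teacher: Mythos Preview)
Your argument is correct and is precisely the intended one: the paper states only that uniqueness is a direct consequence of Theorem \ref{t:comparison}, and your two applications of the comparison principle with the roles of $u$ and $v$ interchanged, together with the boundary inequalities from Definition \ref{d:visc}, supply exactly the details behind that sentence.
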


Finally, in this section we show the weak maximum principle as a simple application of Theorem \ref{t:comparison}.

\begin{corollary}[Weak maximum principle]\label{c:contraction}
Let $f_1,f_2:Q_T\to\mathbb{R}$ be continuous functions.
Let $u$ be a subsolution of 
\begin{equation}\label{e:comparison1}
u_t=(D_x^\alpha u)_x+f_1(x,t)\quad\text{in $Q_T$}
\end{equation}
and let $v$ be a supersolution of
\begin{equation*}
v_t=(D_x^\alpha v)_x+f_2(x,t)\quad\text{in $Q_T$.}
\end{equation*}
Assume that $-\infty<u^*,v_*<+\infty$ on $\partial_pQ_T$.
Then, 
\begin{equation}\label{e:contraction}
\sup_{Q_T\cup\partial_pQ_T}(u^*-v_*)^+(x,t)\le \sup_{\partial_pQ_T}(u^*-v_*)^+ +\int_0^T\sup_{x\in (0,l)}\,
(f_1-f_2)^+(x,s)ds.
\end{equation}
In particular,
\begin{equation}\label{e:comparison2}
\sup_{Q_T\cup\partial_pQ_T}(u^*)^+=\sup_{\partial_pQ_T}(u^*)^+\quad\text{if $f_1\le0$}
\end{equation}
and
\begin{equation}\label{e:comparison3}
\inf_{Q_T\cup\partial_pQ_T}(v_*)^-=\inf_{\partial_pQ_T}(v_*)^-\quad\text{if $f_2\ge0$.}
\end{equation}
\end{corollary}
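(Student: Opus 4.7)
The strategy is to reduce the weak maximum principle to the comparison principle \ref{t:comparison} by absorbing the ``forcing discrepancy'' between the two equations into a modified supersolution.

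Set $M := \sup_{\partial_p Q_T}(u^*-v_*)^+$ and $F(t) := \int_0^t \sup_{x\in(0,l)}(f_1-f_2)^+(x,s)\,ds$, and define
$$
\tilde{v}(x,t) := v(x,t) + M + F(t).
$$
The first main step is to verify that $\tilde{v}$ is a supersolution of $w_t=(D_x^\alpha w)_x+f_1$ on $Q_T$. The key observation is that the operator $(D_x^\alpha\cdot)_x$ together with $J$ and $K_{(a,b)}$ are invariant under adding a function depending only on $t$: constants in $x$ have zero Caputo derivative, and in the formulas for $J$ and $K_{(a,b)}$ any additive $t$-only term cancels in the differences $u(0)-u(x)$ and $u(x-z)-u(x)$. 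Hence for any admissible test function $\varphi$ touching $\tilde{v}_* = v_* + M + F$ from below at $(\hat{x},\hat{t})$, the function $\varphi(x,t)-M-F(t)$ touches $v_*$ from below at the same point, and applying the supersolution property of $v$ (in the form of Proposition \ref{p:alter}(iv)) yields
$$
\varphi_t(\hat{x},\hat{t}) - F'(\hat{t}) \;\ge\; J[v_*,p](\hat{x},\hat{t}) + K_{(0,\hat{x})}[v_*,p](\hat{x},\hat{t}) + f_2(\hat{x},\hat{t}),
$$
which, combined with $F'(\hat{t})\ge (f_1-f_2)^+(\hat{x},\hat{t}) \ge f_1(\hat{x},\hat{t})-f_2(\hat{x},\hat{t})$, gives the required inequality for $f_1$ with $\tilde{v}$ in place of $v$.

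Next I check the boundary inequality: on $\partial_p Q_T$,
$$
u^*(x,t) - \tilde{v}_*(x,t) \;=\; (u^*-v_*)(x,t) - M - F(t) \;\le\; M - M - F(t) \;\le\; 0,
$$
since $F\ge 0$. Theorem \ref{t:comparison} then gives $u^* \le \tilde{v}_* = v_* + M + F(t)$ on all of $Q_T\cup\partial_p Q_T$; taking the positive part and bounding $F(t)\le F(T)$ produces \eqref{e:contraction}. Finally, \eqref{e:comparison2} follows by choosing $v\equiv 0$ (a classical, hence viscosity, solution of $v_t=(D^\alpha_x v)_x$ by Proposition \ref{p:consistency}) with $f_2=0$ so that $(f_1-0)^+\equiv 0$ when $f_1\le 0$; symmetrically, \eqref{e:comparison3} follows by taking $u\equiv 0$, $f_1=0$ and noting $(0-f_2)^+\equiv 0$ when $f_2\ge 0$.

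The main obstacle is the rigorous justification of step one when $F$ fails to be $C^1$: the integrand $\sup_x(f_1-f_2)^+(x,s)$ is only upper semicontinuous in $s$ in general, so $F$ is merely Lipschitz. I would handle this by replacing $F$ with a smooth majorant $F_\varepsilon\in C^1$ satisfying $F_\varepsilon'(s)\ge \sup_x(f_1-f_2)^+(x,s)$ and $F_\varepsilon\searrow F$ as $\varepsilon\to 0$ (obtained by standard mollification of a slightly enlarged integrand), repeating the argument with $\tilde{v}_\varepsilon := v + M + F_\varepsilon$ to conclude $u^*\le v_* + M + F_\varepsilon$, and then passing to the limit $\varepsilon\to 0$. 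All other steps are routine linear manipulations, so this smoothing is the only real technical point.
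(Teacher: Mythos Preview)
Your proof is correct and follows exactly the paper's approach: define $\tilde v = v_* + M + F(t)$, verify it is a supersolution of the $f_1$-equation with $u^*\le \tilde v$ on $\partial_p Q_T$, and apply Theorem~\ref{t:comparison}. The paper's proof is terse and simply asserts that ``it is not difficult to check'' that $\tilde v$ is a supersolution, whereas you spell out the invariance of $J$ and $K_{(a,b)}$ under $t$-only shifts and flag the regularity of $F$ as the one genuine technical point; your proposed mollification fix is standard and adequate, and in fact the paper does not address this issue at all.
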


\begin{proof}
We may assume that
$$
C:=\sup_{\partial_pQ_T}(u^*-v_*)^++\int_0^T\sup_{x\in(0,l)}(f_1-f_2)^+(x,s)ds<+\infty
$$
for all $t\in[0,T)$; otherwise \eqref{e:contraction} is automatically established.
It is not difficult to check that 
$$
\tilde{v}(x,t):=v_*(x,t)+\sup_{\partial_pQ_T}(u^*-v_*)^++\int_0^t\sup_{x\in(0,l)}(f_1-f_2)^+(x,s)ds
$$ 
is a supersolution of \eqref{e:comparison1} and $u^*\le \tilde{v}$ on $\partial_pQ_T$.
Thus, by Theorem \ref{t:comparison} we have $u^*\le \tilde{v}$ in $Q_T\cup\partial_pQ_T$ and obtain \eqref{e:contraction}.

If we put $f_2\equiv0$ and $v=0$ in \eqref{e:contraction}, then
$$
\sup_{Q_T\cup\partial_pQ_T}(u^*)^+\le \sup_{\partial_pQ_T}(u^*)^+ + \int_0^T\sup_{(0,l)}f_1^+(\cdot,s)ds.
$$ 
Thus, if $f_1\le0$, we have $\sup_{Q_T\cup\partial_pQ_T}(u^*)^+\le\sup_{\partial_pQ_T}(u^*)^+$. The converse inequality is always true, hence we get \eqref{e:comparison2}. We also get \eqref{e:comparison3} by arguing similarly.
\end{proof}

\section{Existence of solutions}
We shall construct a (continuous) solution of the initial boundary value problem \eqref{e:master}-\eqref{e:initialboundary} by Perron's method (see \cite{Ishii}) under a certain condition of the initial boundary data $g$.
First, in Subsection \ref{ss:existence}, we show the existence of (possibly discontinuous) solutions under the hypothesis that there exist suitable subsolutions and supersolutions of \eqref{e:master}-\eqref{e:initialboundary}. Specifically, we give a construction of a subsolution in Lemma \ref{l:stabsup}, and through Lemma \ref{l:super} we show that it is in fact also a supersolution in Lemma \ref{l:perron} and hence a solution. In Subsection \ref{ss:construction}, we construct suitable subsolutions and supersolutions and guarantee the existence of the solution; Theorem \ref{t:existence}. As its by-product, we obtain the fact that the solution is bounded and continuous. Its  uniqueness follows from the comparison theorem.

\subsection{Existence by Perron's method}\label{ss:existence}

\begin{lemma}\label{l:stabsup}
Assume that $f$ is continuous.
Let $S_-$ and $S_+$ be nonempty sets of subsolutions and supersolutions of \eqref{e:master}, respectively.
Let  functions $u$ and $v$ be defined by
$$
u(x,t)=\sup\{w(x,t) \mid w\in S_-\}\quad\text{and}\quad v(x,t)=\inf\{w(x,t)\mid w\in S_+\}
$$
for $(x,t)\in Q_{T,0}$.
Then, $u^*$ (resp. $v_*$) is a subsolution (resp.  supersolution) of \eqref{e:master} provided that $u^*<+\infty$ (resp. $v_*>-\infty$) in $Q_{T,0}$, respectively.
\end{lemma}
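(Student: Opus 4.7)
The plan is to prove the subsolution statement for $u^*$; the supersolution statement for $v_*$ is entirely symmetric (cf.\ the first remark following Lemma \ref{l:important}). I would use the local strict maximum criterion of Proposition \ref{p:localstrict}: fix $(\hat x, \hat t) \in Q_T$, $\varphi \in C^{2,1}(Q_T)$ and $N \in \mathcal{N}_{\hat x, \hat t}$ such that $u^* - \varphi$ attains a strict maximum on $N$ at $(\hat x, \hat t)$, together with $\delta \in (0, \hat x)$ satisfying $(\delta, \hat t) \in N$. Since $N$ is open and, by the structure of $\mathcal{N}_{\hat x, \hat t}$, contains the horizontal segment joining $(\hat x, \hat t)$ to $(\delta, \hat t)$, one can pick a small $\eta > 0$ and a closed rectangle $\bar B := [\delta - \eta, \hat x + \eta] \times [\hat t - \eta, \hat t + \eta] \subset N \cap Q_T$; note that $\mathrm{int}(\bar B) \in \mathcal{N}_{x,t}$ for every $(x,t) \in \mathrm{int}(\bar B)$.

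Next I extract an approximating sequence in the standard Perron manner. By the definition of $u^*$ there exist $w_n \in S_-$ and $(y_n, s_n) \in Q_T$ with $(y_n, s_n) \to (\hat x, \hat t)$ and $w_n(y_n, s_n) \to u^*(\hat x, \hat t)$, hence $w_n^*(y_n, s_n) \to u^*(\hat x, \hat t)$. Let $(x_n, t_n) \in \bar B$ be a maximum point of the upper semicontinuous function $w_n^* - \varphi$ on $\bar B$. Combining $w_n^* \le u^*$ on $Q_{T,0}$ with
\[
(w_n^* - \varphi)(x_n, t_n) \ge (w_n^* - \varphi)(y_n, s_n) \to (u^* - \varphi)(\hat x, \hat t)
\]
and the strict maximality of $u^* - \varphi$ at $(\hat x, \hat t)$ on $\bar B$, a standard extraction argument delivers $(x_n, t_n) \to (\hat x, \hat t)$ and $w_n^*(x_n, t_n) \to u^*(\hat x, \hat t)$. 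To invoke Proposition \ref{p:localstrict} for $w_n$ I promote $(x_n, t_n)$ to a strict maximum by setting $\psi_n(x, t) := \varphi(x, t) + \varepsilon_n(|x - x_n|^2 + |t - t_n|^2)$ with $\varepsilon_n \searrow 0$; then $w_n^* - \psi_n$ has a strict maximum on $N_n := \mathrm{int}(\bar B) \in \mathcal{N}_{x_n, t_n}$ at $(x_n, t_n)$, while $(\psi_n)_t(x_n, t_n) = \varphi_t(x_n, t_n)$ and $p_n := (\psi_n)_x(x_n, t_n) = \varphi_x(x_n, t_n)$.

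Proposition \ref{p:localstrict} applied to the subsolution $w_n$ at the splitting point $\delta_n := x_n - \hat x + \delta$ (so $x_n - \delta_n = \hat x - \delta$, while $\delta_n \in (0, x_n)$ and $(\delta_n, t_n) \in N_n$ for $n$ large) yields
\begin{align*}
\varphi_t(x_n, t_n) &\le J[w_n^*, p_n](x_n, t_n) + K_{(0, \hat x - \delta)}[\psi_n, p_n](x_n, t_n) \\
&\quad + K_{(\hat x - \delta, x_n)}[w_n^*, p_n](x_n, t_n) + f(x_n, t_n).
\end{align*}
The same explicit computation as in the proof of Proposition \ref{p:localstrict} shows $K_{(0, \hat x - \delta)}[\psi_n, p_n](x_n, t_n) = K_{(0, \hat x - \delta)}[\varphi, p_n](x_n, t_n) + O(\varepsilon_n)$. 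Passing to the limit $n \to \infty$ and invoking Lemma \ref{l:important} with $u^*$ in place of $u$ and $w_n^*$ in place of $u_\varepsilon$ (so that $w_n^* \le u^*$ replaces the assumption $u_\varepsilon \le u$): part (i) gives $\limsup_n J[w_n^*, p_n](x_n, t_n) \le J[u^*, p](\hat x, \hat t)$ with $p := \varphi_x(\hat x, \hat t)$; an argument in the style of part (ii), i.e.\ dominated convergence on the uniformly dominated integrand coming from $\varphi \in C^{2,1}$, delivers $K_{(0, \hat x - \delta)}[\varphi, p_n](x_n, t_n) \to K_{(0, \hat x - \delta)}[\varphi, p](\hat x, \hat t)$; part (iii) dominates $K_{(\hat x - \delta, x_n)}[w_n^*, p_n](x_n, t_n)$ by $K_{(\hat x - \delta, \hat x)}[u^*, p](\hat x, \hat t)$ in the limit superior (after an elementary splitting to absorb the moving upper endpoint $x_n$); finally, continuity of $f$ gives $f(x_n, t_n) \to f(\hat x, \hat t)$. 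Taking $\limsup$ in the displayed inequality yields exactly the estimate required by Proposition \ref{p:localstrict}.

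The main obstacle, beyond the routine extraction of maximizers $(x_n, t_n)$ via the strict maximum property, is the simultaneous handling of two perturbations inside the nonlocal $K$ operator: the quadratic $\varepsilon_n$-bump that promotes $(x_n, t_n)$ to a strict maximum (whose contribution to $K_{(0, \hat x - \delta)}$ is $O(\varepsilon_n) \to 0$) and the moving upper endpoint $x_n \to \hat x$ of $K_{(\hat x - \delta, x_n)}$ (which is reduced to Lemma \ref{l:important} by splitting the integral into a piece with fixed endpoints and a remainder over an interval of length $|x_n - \hat x| \to 0$ on which the integrand is bounded). Once these technicalities are reconciled, the Perron mechanism closes and $u^*$ is a subsolution.
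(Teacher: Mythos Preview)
Your proof is correct and follows essentially the same route as the paper's: extract approximate maximizers $(x_n,t_n)$ of $w_n^*-\varphi$ converging to $(\hat x,\hat t)$ with $w_n^*(x_n,t_n)\to u^*(\hat x,\hat t)$, invoke the local-maximum characterization (Proposition \ref{p:localstrict}) for each subsolution $w_n$, and pass to the limit via Lemma \ref{l:important}. The paper simply cites \cite[Lemma V.1.6]{BardiCapuzzoDolcetta} for the extraction step and uses the splitting point $\delta$ directly (so that Lemma \ref{l:important} (ii)--(iii) apply verbatim with $a=\delta$), whereas you carry out the extraction by hand, add an $\varepsilon_n$-bump to force strictness, and shift the split to $\delta_n=x_n-\hat x+\delta$ so that $x_n-\delta_n=\hat x-\delta$ is fixed; these are harmless bookkeeping variants and your explicit strictification actually patches a small omission in the paper's invocation of Proposition \ref{p:localstrict}.
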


\begin{proof}
We perform the proof only for $u$, a subsolution, since the argument of a supersolution $v$ is the same. Let $((\hat{x},\hat{t}),\varphi)\in Q_T\times C^{2,1}(Q_T)$ be such that $u^*-\varphi$ attains a strict maximum on $Q_T$ at $(\hat{x},\hat{t})$.
We fix $\delta\in(0,\hat{x})$ arbitrarily. The goal is to show that
\begin{equation}\label{e:stabsup1}
\varphi_t(\hat{x},\hat{t})\le J[u^*,p](\hat{x},\hat{t})+K_{(0,\hat{x}-\delta)}[\varphi,p](\hat{x},\hat{t})+K_{(\hat{x}-\delta,\hat{x})}[u^*,p](\hat{x},\hat{t})+f(\hat{x},\hat{t}),
\end{equation}
where $p=\varphi_x(\hat{x},\hat{t})$.

According to \cite[Lemma V.1.6]{BardiCapuzzoDolcetta}, there exist a sequence $((x_\varepsilon,t_\varepsilon),u_\varepsilon)\in Q_T\times S_-$ and a neighborhood with compact closure $N\in \mathcal{N}_{\hat{x},\hat{t}}$ with $(\delta,\hat{t})\in N$ such that $(x_\varepsilon,t_\varepsilon)$ is a maximum point for $u_\varepsilon^*-\varphi$ on $N$ and
\begin{equation*}\label{e:stabconv}
\lim_{\varepsilon\to0}(x_\varepsilon,t_\varepsilon,u_\varepsilon^*(x_\varepsilon,t_\varepsilon))=(\hat{x},\hat{t},u^*(\hat{x},\hat{t})).
\end{equation*}
(For the proof, refer to the argument leading to \eqref{e:comp1}.)
We may assume that $\delta<\inf_\varepsilon x_\varepsilon$ by restricting to smaller $\varepsilon$.
Since $u_\varepsilon$ is a subsolution of \eqref{e:master}, by Proposition \ref{p:localstrict} we have
\begin{equation}\label{e:subj}
\begin{split}
\varphi_t(x_\varepsilon,t_\varepsilon)\le& J[u_\varepsilon^*,p_\varepsilon](x_\varepsilon,t_\varepsilon)+K_{(0,x_\varepsilon-\delta)}[\varphi,p_\varepsilon](x_\varepsilon,t_\varepsilon)\\
&+K_{(x_\varepsilon-\delta,x_\varepsilon)}[u_\varepsilon^*,p_\varepsilon](x_\varepsilon,t_\varepsilon)+f(x_\varepsilon,t_\varepsilon),
\end{split}
\end{equation}
where $p_\varepsilon=\varphi_x(x_\varepsilon,t_\varepsilon)$.
The definition of $u$ implies $u_\varepsilon^*\le u^*$ in $Q_{T,0}$ and hence Lemma \ref{l:important} is now applicable. It yields,
\begin{align*}
&\limsup_{\varepsilon\to0}(J[u_\varepsilon^*,p_\varepsilon](x_\varepsilon,t_\varepsilon)+K_{(0,x_\varepsilon-\delta)}[\varphi,p_\varepsilon](x_\varepsilon,t_\varepsilon)+K_{(x_\varepsilon-\delta,x_\varepsilon)}[u_\varepsilon^*,p_\varepsilon](x_\varepsilon,t_\varepsilon))\\
&\le J[u^*,p](\hat{x},\hat{t})+K_{(0,\hat{x}-\delta)}[\varphi,p](\hat{x},\hat{t})+K_{(\hat{x}-\delta,\hat{x})}[u^*,p](\hat{x},\hat{t}).
\end{align*}
Therefore, we get \eqref{e:stabsup1} by taking the limit supremum in \eqref{e:subj} as $\varepsilon\to0$.
\end{proof}

\begin{lemma}\label{l:super}
Let $\eta$ be a supersolution of \eqref{e:master} and let $S_-$ be a nonempty set of subsolutions $v$ of \eqref{e:master} that satisfies $v\le \eta$ in $Q_{T,0}$.
If $u_*\in S_-$ is not a supersolution of \eqref{e:master} while $u_*>-\infty$ in $Q_{T,0}$, then there exist a function $w$ such that $w\in S_-$ and a point $(y,s)\in Q_T$ such that $u(y,s)<w(y,s)$.
\end{lemma}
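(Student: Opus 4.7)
The plan is the standard Perron-method bump construction, adapted to the nonlocal operator. Suppose $u_* \in S_-$ fails to be a supersolution; by the symmetric version of Proposition \ref{p:alter} and a routine quadratic perturbation, there exist $(\hat x, \hat t) \in Q_T$ and $\varphi \in \mathcal{C}(Q_{T,0})$ with $u_* - \varphi$ attaining a strict minimum on $Q_{T,0}$ at $(\hat x, \hat t)$, $u_*(\hat x, \hat t) = \varphi(\hat x, \hat t)$, and
$$
\varphi_t(\hat x, \hat t) - (D^\alpha_x \varphi)_x(\hat x, \hat t) - f(\hat x, \hat t) < 0.
$$
Since $u_* \le \eta$ pointwise and $u_*$ is lower semicontinuous, $u_* \le \eta_*$. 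If $\eta_*(\hat x, \hat t) = u_*(\hat x, \hat t)$, then $\eta_* - \varphi$ would have a local minimum at $(\hat x, \hat t)$, which by the supersolution property of $\eta$ would contradict the displayed strict inequality. Hence $\mu := \tfrac12(\eta_*(\hat x, \hat t) - u_*(\hat x, \hat t)) > 0$.

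I then bump $\varphi$ to $\psi(x,t) := \varphi(x,t) + \mu - \kappa(|x-\hat x|^2 + |t - \hat t|^2)$ with $\kappa > 0$ small. Proposition \ref{p:another} applied to the smooth quadratic correction gives $(D^\alpha_x \psi)_x = (D^\alpha_x \varphi)_x + O(\kappa)$ locally, and Lemma \ref{l:important}(ii) together with the continuity of $\varphi_t$ implies that $(x,t) \mapsto \psi_t(x,t) - (D^\alpha_x \psi(\cdot,t))_x(x) - f(x,t)$ is continuous near $(\hat x, \hat t)$. For $\kappa$ small this quantity stays strictly negative at $(\hat x, \hat t)$, so there is $r > 0$ with $\overline{B_r(\hat x, \hat t)} \subset Q_T$ on which $\psi$ is a pointwise, hence viscosity (by Proposition \ref{p:consistency}), subsolution of \eqref{e:master}. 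Possibly shrinking $r$, the lower semicontinuity of $\eta_*$ together with $\psi(\hat x, \hat t) = u_*(\hat x, \hat t) + \mu < \eta_*(\hat x, \hat t)$ yields $\psi \le \eta$ on $B_r$, and the strict minimum of $u_* - \varphi$ at $(\hat x, \hat t)$ gives $\psi < u_*$ on the annular shell $B_r \setminus \overline{B_{r/2}}$.

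Finally, define
$$
w(x,t) := \begin{cases} \max\{u_*(x,t), \psi(x,t)\} & (x,t) \in B_r(\hat x, \hat t), \\ u_*(x,t) & \text{otherwise,} \end{cases}
$$
which is unambiguous thanks to the annular estimate, and clearly satisfies $w \le \eta$ everywhere. The function $w$ is a viscosity subsolution of \eqref{e:master}: applying Proposition \ref{p:alter}(iv) at any maximum point of $w^* - \chi$, one splits according to whether $w^*$ is realized by the upper envelope of $u_*$ or by $\psi$, and in each case inherits the required inequality from $u_* \in S_-$ or from the classical subsolution $\psi$. Hence $w \in S_-$. Since $u_*$ is the lower semicontinuous envelope of $u$, there is a sequence $(y_n, s_n) \to (\hat x, \hat t)$ in $B_r$ with $u(y_n, s_n) \to u_*(\hat x, \hat t)$; by continuity of $\psi$, $\psi(y_n, s_n) \to u_*(\hat x, \hat t) + \mu$, so for $n$ large, $w(y_n, s_n) \ge \psi(y_n, s_n) > u(y_n, s_n)$, yielding the required $(y,s)$.

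The main obstacle is showing that $\psi$ is a classical subsolution throughout a full neighborhood of $(\hat x, \hat t)$: because $(D^\alpha_x \cdot)_x$ is nonlocal, this cannot be read off pointwise smoothness and leans on the continuity statements of Lemma \ref{l:important}. A secondary subtlety is the nonlocal "max of two subsolutions" verification, which is not purely local and must be handled through the test-function characterization of Proposition \ref{p:alter}(iv) rather than by any direct algebraic manipulation of $(D^\alpha_x w)_x$.
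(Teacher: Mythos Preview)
Your overall architecture is the same as the paper's Perron bump, but the specific construction has a genuine parameter clash that breaks the argument. You lift $\varphi$ by the \emph{fixed} amount $\mu=\tfrac12(\eta_*-u_*)(\hat x,\hat t)$ and then subtract $\kappa(|x-\hat x|^2+|t-\hat t|^2)$ with $\kappa$ small. A direct computation via Proposition~\ref{p:another} shows that at $(\hat x,\hat t)$ the quadratic contributes a strictly \emph{negative} amount to $(D_x^\alpha\,\cdot\,)_x$, so adding $-\kappa(\cdot)$ makes the subsolution residual $\psi_t-(D_x^\alpha\psi)_x-f$ \emph{increase} by a term of order $\kappa$; hence $\kappa$ must indeed be small. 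But then your claim ``the strict minimum of $u_*-\varphi$ gives $\psi<u_*$ on the annulus $B_r\setminus\overline{B_{r/2}}$'' fails: on that annulus $u_*-\varphi$ can be arbitrarily small as $r\to0$, while $\psi-u_*=\mu-\kappa\,\mathrm{dist}^2-(u_*-\varphi)$ stays close to $\mu>0$ for small $\kappa$ and small $r$. Shrinking $r$, as you propose, only makes this worse.

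This is not a cosmetic issue, because for the nonlocal operator you need more than the annular estimate: in the case $w^*(\hat y,\hat s)=\psi(\hat y,\hat s)$, the inequality $K_{(0,\hat y)}[w^*,p]\ge K_{(0,\hat y)}[\psi,p]$ requires $w^*\ge\psi$ along the \emph{entire} segment $[0,\hat y]\times\{\hat s\}$, i.e.\ $u_*\ge\psi$ throughout $Q_{T,0}\setminus B_r$. Your vague appeal to Proposition~\ref{p:alter}(iv) hides exactly this global comparison; and your invocation of Proposition~\ref{p:consistency} to call $\psi$ a ``viscosity subsolution on $B_r$'' is a misapplication, since that proposition requires the pointwise inequality on all of $Q_T$. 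The paper's remedy is to drop the quadratic entirely and lift by a \emph{small} constant $\lambda'\in(0,\lambda)$ chosen after fixing $r$ so that $\varphi+2\lambda'\le u_*$ on $Q_{T,0}\setminus B_r$; the constant lift leaves the classical subsolution inequality for $\varphi$ intact and gives the needed global bound $w\ge\varphi+\lambda'$ immediately.
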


\begin{proof}
Since $u_*\in S_-$ is not a supersolution of \eqref{e:master}, there is $((\hat{x},\hat{t}),\varphi)\in Q_T\times (C^{2,1}(Q_T)\cap C(Q_{T,0}))$ such that $u_*-\varphi$ attains a strict minimum on $Q_{T,0}$ at $(\hat{x},\hat{t})$ and
\begin{equation}\label{e:existence1}
\varphi_t(\hat{x},\hat{t})<J[\varphi,p](\hat{x},\hat{t})+K_{(0,\hat{x})}[\varphi,p](\hat{x},\hat{t})+f(\hat{x},\hat{t}),
\end{equation}
where $p=\varphi_x(\hat{x},\hat{t})$; see Proposition \ref{p:alter} (ii).
We may assume that $(u_*-\varphi)(\hat{x},\hat{t})=0$ by replacing $\varphi$ with $\varphi+(u_*-\varphi)(\hat{x},\hat{t})$ if necessary.
It follows immediately from Lemma \ref{l:important} that $(x,t)\mapsto J[\varphi,p_{x,t}](x,t)+K_{(0,x)}[\varphi,p_{x,t}](x,t)$ with $p_{x,t}=\varphi_x(x,t)$ is continuous in $Q_T$. Thus there is $r>0$ such that  
\begin{equation}\label{e:existence2}
\varphi_t(x,t)<J[\varphi,p_{x,t}](x,t)+K_{(0,x)}[\varphi,p_{x,t}](x,t)+f(x,t)
\end{equation}
holds for all $(x,t)\in B_{2r}:=Q_T\cap B_{2r}(\hat{x},\hat{t})$.

We see that $u^*$ satisfies $\varphi\le u_*\le \eta$ in $Q_{T,0}$ by the definition.
Suppose $\varphi=\eta_*$ at $(\hat{x},\hat{t})$.
Then $\min_{Q_{T,0}}(\eta_*-\varphi)=(\eta_*-\varphi)(\hat{t},\hat{x})$ so \eqref{e:existence1} is contradictory since $\eta$ is a supersolution of \eqref{e:master}.
Thus we know that $\varphi<\eta$ at $(\hat{x},\hat{t})$.
We set $\lambda:=\frac{1}{2}(\eta-\varphi)(\hat{x},\hat{t})>0$.
The lower semicontinuity of $\eta_*-\varphi$ implies that $\varphi+\lambda\le \eta$ in $B_{2r}$ by letting $r$ smaller if necessary.
Since $u_*>\varphi$ in $Q_{T,0}\setminus\{(\hat{x},\hat{t})\}$, there is $\lambda'\in(0,\lambda)$ such that $\varphi+2\lambda'\le u_*$ in $Q_{T,0}\setminus B_r$.

Define $w:Q_{T,0}\to\mathbb{R}$ by
\begin{equation*}
  w(x,t)=\begin{cases}
    u(x,t)\vee (\varphi(x,t)+\lambda')\quad&\text{for $(x,t)\in B_r$,}\\
    u(x,t)\quad&\text{for $(x,t)\in Q_{T,0}\setminus B_r$.}
  \end{cases}
\end{equation*} 
We show that $w$ is the desirable function in the statement of this lemma.
In order to show that $w\in S_-$, it suffices to prove that $w^*$ is a subsolution of \eqref{e:master} since it is clear that $w\le \eta$ in $Q_{T,0}$ by the construction.
To this end, we take$((\hat{y},\hat{s}),\psi)\in Q_T\times(C^{1,2}(Q_T)\cap C(Q_{T,0}))$ such that $\max_{Q_{T,0}}(w^*-\psi)=(w^*-\psi)(\hat{y},\hat{s})$ and aim to show
\begin{equation}\label{e:existence2.5}
\psi_t(\hat{y},\hat{s})\le J[\psi,q](\hat{y},\hat{s})+K_{(0,\hat{y})}[\psi,q](\hat{y},\hat{s})+f(\hat{y},\hat{s}).
\end{equation}
We may assume that $(w^*-\psi)(\hat{y},\hat{s})=0$.

In the case that $w^*=u^*$ at $(\hat{y},\hat{s})$, we see $\max_{Q_{T,0}}(u^*-\psi)=(u^*-\psi)(\hat{y},\hat{s})$.
Since $u^*$ is a subsolution of \eqref{e:master}, then \eqref{e:existence2.5} is obtained by Lemma \ref{l:stabsup}.

In the case that $w^*=\varphi+\lambda'$ at $(\hat{y},\hat{s})$, we see that $\max_{Q_{T,0}}(\varphi+\lambda'-\psi)=(\varphi+\lambda'-\psi)(\hat{y},\hat{s})=0$.
Evidently, $\psi_y=\varphi_x$ and $\psi_s=\varphi_t$ at $(\hat{y},\hat{s})$.
By definition of $w$, it is clear that $w^*(\hat{y}-z,\hat{s})\ge\varphi(\hat{y}-z,\hat{s})+\lambda'$ for all $z\in[0,\hat{y}]$ and hence
\begin{align*}
\psi(\hat{y}-z,\hat{s})-\psi(\hat{y},\hat{s})-\varphi(\hat{y}-z,\hat{s})+\varphi(\hat{y},\hat{s})
&=\psi(\hat{y}-z,\hat{s})-\varphi(\hat{y}-z,\hat{s})-\lambda'\\
&\ge\psi(\hat{y}-z,\hat{s})-w^*(\hat{y}-z,\hat{s})\\
&\ge0.
\end{align*}
This implies that 
$$J[\psi,q](\hat{y},\hat{s})\ge J[\varphi,p_{\hat{y},\hat{s}}](\hat{y},\hat{s})\quad\hbox{and}\quad K_{(0,\hat{y})}[\psi,q](\hat{y},\hat{s})\ge K_{(0,\hat{y})}[\varphi,p_{\hat{y},\hat{s}}](\hat{y},\hat{s}),$$ 
where $q=\psi_y(\hat{y},\hat{s})$.
Since $w=u$ in $Q_{T,0}\setminus B_r$, $(\hat{y},\hat{s})\in B_r$ and thus, by using \eqref{e:existence2}, we obtain \eqref{e:existence2.5} as
\begin{align*}
&\psi_t(\hat{y},\hat{s})-J[\psi,q](\hat{y},\hat{s})-K_{(0,\hat{y})}[\psi,q](\hat{y},\hat{s})-f(\hat{y},\hat{s})\\
&\le \varphi_t(\hat{y},\hat{s})-J[\varphi,p_{\hat{y},\hat{s}}](\hat{y},\hat{s})-K_{(0,\hat{y})}[\varphi,p_{\hat{y},\hat{s}}](\hat{y},\hat{s})-f(\hat{y},\hat{s})\\
&\le0.
\end{align*}

There is a sequence $(x_\varepsilon,t_\varepsilon)\in Q_{T,0}$ such that $\lim_{\varepsilon\to0}(x_\varepsilon,t_\varepsilon,u(x_\varepsilon,t_\varepsilon))=(\hat{x},\hat{t},u_*(\hat{x},\hat{t}))$. Then we have
$$
\liminf_{\varepsilon\to0}(w(x_\varepsilon,t_\varepsilon)-u(x_\varepsilon,t_\varepsilon))\ge \lim_{\varepsilon\to0}(\varphi(x_\varepsilon,t_\varepsilon)+\lambda'-u(x_\varepsilon,t_\varepsilon))=\lambda'>0.
$$
This means that there is a point $(x,t)\in Q_T$ such that $w(x,t)>u(x,t)$.
The proof is now complete.
\end{proof}

\begin{lemma}\label{l:perron}
Assume that $f$ and $g$ are continuous. Let $\xi$ be a subsolution (respectively, $\eta$ be a supersolution) of \eqref{e:master}-\eqref{e:initialboundary},  satisfying $\eta^*<+\infty$ and $\xi_*>-\infty$ in $Q_T\cup\partial_pQ_T$.
Suppose that $\xi\le\eta$ in $Q_T\cup\partial_pQ_T$ and $\xi_*=\eta^*=g$ in $\partial_pQ_T$.
Then, there exists a (possibly discontinuous) solution $u$ of \eqref{e:master}-\eqref{e:initialboundary} that satisfies $\xi\le u\le \eta$ in $Q_T\cup\partial_pQ_T$ and $u=g$ on $\partial_pQ_T$.
\end{lemma}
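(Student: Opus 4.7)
The plan is to execute the standard Perron construction using the two preceding lemmas as black boxes. Define the candidate set
\[
S := \{ w \mid w \text{ is a subsolution of \eqref{e:master} in } Q_T \text{ and } \xi \le w \le \eta \text{ in } Q_T \cup \partial_p Q_T \},
\]
and set $u(x,t) := \sup\{w(x,t) \mid w \in S\}$ for $(x,t)\in Q_T\cup\partial_p Q_T$. Since $\xi \in S$ the set is nonempty, and the pointwise bounds $\xi \le u \le \eta$ hold by construction. In particular, $u^* \le \eta^* < +\infty$ and $u_* \ge \xi_* > -\infty$ on $Q_T\cup\partial_p Q_T$.

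Next I would apply Lemma \ref{l:stabsup} to conclude that $u^*$ is a subsolution of \eqref{e:master}. To verify that $u_*$ is a supersolution of \eqref{e:master}, I argue by contradiction. Suppose $u_*$ fails to be a supersolution somewhere. Then Lemma \ref{l:super}, applied with the supersolution $\eta$ and the set $S_- := S$, yields a function $w \in S$ and a point $(y,s) \in Q_T$ with $u(y,s) < w(y,s)$. This directly contradicts the definition of $u$ as the pointwise supremum over $S$. Hence $u_*$ is a supersolution of \eqref{e:master}, and combined with the previous step, $u$ is a solution of the equation in $Q_T$.

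It remains to check the boundary condition, which is where one must be slightly careful since taking semicontinuous envelopes can alter boundary values. From $\xi \le u \le \eta$ on $Q_T \cup \partial_p Q_T$ and monotonicity of the envelope operations, I obtain
\[
\xi_* \le u_* \le u^* \le \eta^* \quad \text{on } Q_T\cup\partial_p Q_T.
\]
On $\partial_p Q_T$, the hypothesis $\xi_* = \eta^* = g$ forces $u_* = u^* = g$, so in particular $u = g$ pointwise on $\partial_p Q_T$, and $u^* \le g$ and $u_* \ge g$ there as required by Definition \ref{d:visc}. Together with the interior viscosity properties established above, this shows that $u$ is a (possibly discontinuous) solution of \eqref{e:master}--\eqref{e:initialboundary} with $\xi \le u \le \eta$.

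The only nontrivial step is the supersolution check, and the main subtlety is logical rather than computational: one must verify that the function $w$ produced by Lemma \ref{l:super} actually lies in $S$ (i.e., that the localized bump construction preserves both $w \le \eta$ and $w \ge \xi$). The former is built into Lemma \ref{l:super}, while the latter is automatic because $w \ge u \ge \xi$ by the construction in the proof of that lemma. All remaining work is routine bookkeeping with upper and lower semicontinuous envelopes on the parabolic boundary.
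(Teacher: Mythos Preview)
Your proof is correct and follows essentially the same Perron-method argument as the paper: define the candidate as the supremum over a class of subsolutions bounded above by $\eta$, invoke Lemma~\ref{l:stabsup} for the subsolution property, Lemma~\ref{l:super} by contradiction for the supersolution property, and finish with the sandwich $\xi_*\le u_*\le u^*\le\eta^*$ on $\partial_pQ_T$. The only cosmetic difference is that you additionally impose $w\ge\xi$ in the definition of $S$ (and correctly note that the bump from Lemma~\ref{l:super} satisfies $w\ge u\ge\xi$), whereas the paper imposes only $w\le\eta$ together with the boundary inequality $w^*\le g$; these variants yield the same supremum.
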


\begin{proof}
Let $S$ be a set of subsolutions $w$ of \eqref{e:master}-\eqref{e:initialboundary} that satisfies $w\le \eta$ in $Q_T\cup\partial_pQ_T$.
We notice that $S\neq\emptyset$ since $\xi\in S$.
We define
$$
u(x,t):=\sup\{w(x,t)\mid w\in S\}\quad\text{for $(x,t)\in Q_T\cup\partial_pQ_T$}.
$$
Lemma \ref{l:stabsup} ensures that $u^*$ is a subsolution of \eqref{e:master}.
If $u_*$ were not a supersolution of \eqref{e:master}, then by Lemma \ref{l:super} there would exist a subsolution $w$ of \eqref{e:master} and a point $(y,s)\in Q_T$ such that $u(y,s)<w(y,s)$.
But this contradicts the maximality of $u$. Thus, $u_*$ must be a supersolution of \eqref{e:master}.
Thus we see that $u$ is a solution of \eqref{e:master}.
It is clear from the definition of $u$ that $\xi\le u\le \eta$ in $Q_T\cup\partial_pQ_T$, so that $\xi_*\le u_*\le u\le u^*\le\eta^*$ in $Q_T\cup\partial_pQ_T$.
Since $\xi_*=\eta^*=g$ on $\partial_pQ_T$, we have $u^*=u_*=u=g$ on $\partial_pQ_T$.
\end{proof}

\subsection{Construction of suitable sub- and supersolutions}\label{ss:construction}

In order to obtain a subsolution and a supersolution satisfying the condition of Lemma \ref{l:perron}, we construct subsolutions and supersolutions agreeing with the boundary data and the initial data in Proposition \ref{p:lateral} and Proposition \ref{p:bottom}, respectively. Here, we only present the proof for subsolutions since the same is applied to supersolutions. The method of construction follows the conventional one, e.g., \cite{CrandallKocanLionsSwiech} (see also \cite{BirindelliDemengel}).

\begin{proposition}\label{p:lateral}
Assume that $f$ is bounded continuous and $g$ is uniformly continuous.
Then, there are a bounded subsolution $\xi_1$ and a bounded supersolution $\eta_1$ of \eqref{e:master}-\eqref{e:initialboundary} that satisfy $\xi_1=\eta_1=g$ on $\{0,l\}\times[0,T)$.
\end{proposition}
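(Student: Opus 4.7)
The plan is a Perron-style construction of barriers at each lateral boundary point, combined through Lemma \ref{l:stabsup}. Let $\omega$ be a concave modulus of continuity for $g$ on $\partial_p Q_T$, which exists by uniform continuity. For each $(y_0, s_0) \in \{0, l\} \times [0, T)$ I would produce an explicit classical subsolution $\xi^{y_0, s_0} \in C^{2,1}(Q_T) \cap C(\overline{Q_T})$ of \eqref{e:master}, matching $g$ at $(y_0, s_0)$ and satisfying $\xi^{y_0, s_0} \le g$ on the whole of $\partial_p Q_T$, together with a symmetric supersolution $\eta^{y_0, s_0} \ge g$. The form I have in mind is
\[
\xi^{y_0, s_0}(x, t) = g(y_0, s_0) - \Omega(x, t) - K(t - s_0)^+,
\]
where $\Omega \in C^{2,1}$ satisfies $\Omega(y_0, s_0) = 0$ and $\Omega(x, t) \ge \omega(|x - y_0| + |t - s_0|)$ on $\partial_p Q_T$, behaves like $c|x - y_0|^\gamma$ in $x$ near $y_0$ for some $\gamma \in (\alpha, 1]$, and $K$ is a large constant absorbing $\|f\|_\infty$ and a pointwise bound for $(D_x^\alpha \Omega)_x$ on $Q_T$.

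The subsolution inequality is then verified pointwise by Proposition \ref{p:another}: the explicit formula \eqref{e:another} shows that $(D_x^\alpha \xi^{y_0, s_0})_x$ is bounded on $Q_T$ uniformly in $(y_0, s_0)$ provided $\gamma > \alpha$, so a sufficiently large $K$ gives $\xi^{y_0, s_0}_t \le (D_x^\alpha \xi^{y_0, s_0})_x + f$ pointwise; Proposition \ref{p:consistency} upgrades this to the viscosity sense. The parabolic boundary inequality $\xi^{y_0, s_0} \le g$ on $\partial_p Q_T$ holds by construction of $\Omega$, and equality at $(y_0, s_0)$ is immediate. The supersolution $\eta^{y_0, s_0}$ is built symmetrically by flipping signs.

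Having the barriers, I set
\[
\xi_1(x, t) := \sup \{\xi^{y_0, s_0}(x, t) : (y_0, s_0) \in \{0, l\} \times [0, T)\}, \qquad \eta_1 := \inf_{(y_0, s_0)} \eta^{y_0, s_0}.
\]
By Lemma \ref{l:stabsup}, the envelope $(\xi_1)^*$ is a subsolution of \eqref{e:master} and $(\eta_1)_*$ a supersolution. Since each $\xi^{y_0, s_0}$ touches $g$ at its base point $(y_0, s_0)$ and stays below $g$ on $\partial_p Q_T$, we conclude $\xi_1 = g$ on $\{0, l\} \times [0, T)$; similarly for $\eta_1$. Boundedness of $\xi_1$ and $\eta_1$ is clear from the explicit form of the barriers and from boundedness of $g$.

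The main difficulty is producing $\Omega$ near the singular endpoint $y_0 = 0$: formula \eqref{e:another} contains the factor $x^{-(\alpha+1)}$ in the term $J[u,p](x)$, which is tamed only if $\Omega(\cdot, t)$ is Hölder continuous of exponent strictly greater than $\alpha$ at the origin. This is exactly what the condition $\gamma > \alpha$ in the local behaviour of $\Omega$ encodes, and it dictates the shape of the barrier. If the modulus $\omega$ of $g$ itself does not satisfy $\omega(r) \le A r^\gamma$ near zero, I would first approximate $g$ by Hölder boundary data $g_\varepsilon \to g$ uniformly, carry out the construction for each $g_\varepsilon$, and pass to the limit using a further application of Lemma \ref{l:stabsup}. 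The endpoint $y_0 = l$ is analogous and technically easier, since the Caputo kernel is not singular there.
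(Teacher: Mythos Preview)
Your overall architecture---build an explicit barrier at each lateral point and combine them via Lemma~\ref{l:stabsup}---matches the paper's. The gap is in the spatial profile of your barrier at $y_0=0$. You assert that if $\Omega(\cdot,t)\sim c\,x^\gamma$ near $0$ with $\gamma\in(\alpha,1]$, then formula~\eqref{e:another} gives $(D_x^\alpha\Omega)_x$ bounded on $Q_T$. This is false: by~\eqref{e:wellknown} one has $(D_x^\alpha x^\gamma)_x=\dfrac{\Gamma(\gamma+1)}{\Gamma(\gamma-\alpha)}\,x^{\gamma-\alpha-1}$, and for $\gamma\in(\alpha,1]$ the exponent $\gamma-\alpha-1$ is negative while the coefficient is positive, so this tends to $+\infty$ as $x\to0^+$. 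Hence $(D_x^\alpha\xi^{y_0,s_0})_x=-(D_x^\alpha\Omega)_x\to-\infty$, and the pointwise subsolution inequality $\xi_t\le(D_x^\alpha\xi)_x+f$ fails near $x=0$ for every finite $K$. The obstruction lies in the equation itself, not in the boundary inequality $\xi\le g$, so your fallback of approximating $g$ by H\"older data does not remove it.

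The paper avoids this by a specific profile. At $y=0$ it sets
\[
\rho(x)=-\tfrac{1}{\Gamma(2+\alpha)}x^{1+\alpha}+\tfrac{C}{\Gamma(1+\alpha)}x^\alpha,\qquad C>l/(1+\alpha),
\]
engineered so that $(D_x^\alpha\rho)_x\equiv-1$ \emph{exactly} on $(0,l)$: the $x^\alpha$ term contributes zero to $(D_x^\alpha\cdot)_x$ but is what makes $\rho>0$ on $(0,l]$. With $\xi^{y,s,\varepsilon}(x,t)=g(y,s)-2\varepsilon-(M_1+M_2+\|f\|_\infty)\rho(x)-M_2|t-s|$ one then obtains a subsolution for \emph{every} $M_1,M_2>0$, and these constants are afterwards chosen (depending on $\varepsilon$) large enough that $M_1\rho(x)\ge(\omega(|x-y|)-\varepsilon)^+$ and $M_2|t-s|\ge(\omega(|t-s|)-\varepsilon)^+$. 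Taking the supremum also over $\varepsilon>0$ recovers $\xi_1=g$ on $\{0,l\}\times[0,T)$ with no H\"older hypothesis on $\omega$. The two ingredients your proposal is missing are thus (i) a profile whose $(D_x^\alpha\rho)_x$ is a negative constant rather than merely bounded, and (ii) the $\varepsilon$-slack that keeps $M_1,M_2$ finite for an arbitrary modulus of continuity.
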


\begin{proof}
For $(y,s)\in\{0,l\}\times[0,T)$ and $\varepsilon>0$ we define $\xi_1^{y,s,\varepsilon}:Q_T\cup\partial_p Q_T\to\mathbb{R}$ by
$$
\xi_1^{y,s,\varepsilon}(x,t)=g(y,s)-2\varepsilon-(M_1+M_2+\|f\|_\infty)\rho^{{y}}(x)-M_2|t-s|.
$$ 
Here $M_1$ and $M_2$ are positive constants to be chosen later and
\begin{equation}\label{e:rho}
	\rho^{{y}}(x)=\begin{cases}
		\displaystyle-\frac{1}{\Gamma(2+\alpha)}x^{1+\alpha}+\frac{C}{\Gamma(1+\alpha)}x^\alpha\quad&\text{when $y=0$,}\\
		\displaystyle-\frac{1}{\Gamma(2+\alpha)}(x^{1+\alpha}-L^{1+\alpha})\quad&\text{when $y=l$,}
	\end{cases}
\end{equation}
where $C>l/(1+\alpha)$. Subsequently, we will supress the superindex $y$.
Note that $\rho\in C^2(0,l)\cup C[0,l)$ and it satisfies $\rho'\in L^1(0,l)$, $\rho(y)=0$, $\rho>0$ in $[0,l]\setminus\{y\}$, and $(D_x^\alpha \rho)_x=-1$ in $(0,l)$.
The last one can be verified using the well-known formula
\begin{equation}\label{e:wellknown}
D_x^\alpha x^\beta=\frac{\Gamma(\beta+1)}{\Gamma(\beta-\alpha+1)}x^{\beta-\alpha}\quad\text{for $\beta>-1$.}
\end{equation}

We claim that $\xi_1^{y,s,\varepsilon}$ is a subsolution of \eqref{e:master}-\eqref{e:initialboundary} if $M_1$ and $M_2$ are taken large enough.
To see this, we first take $((\hat{x},\hat{t}),\varphi)\in Q_T\times C^{2,1}(Q_T)$ such that $\max_{Q_T}(\xi_1^{y,s,\varepsilon}-\varphi)=(\xi_1^{y,s,\varepsilon}-\varphi)(\hat{x},\hat{t})$.
Then, $\varphi_t(\hat{x},\hat{t})\le M_2$ because
\begin{align*}
\varphi(\hat{x},\hat{t})-\varphi(\hat{x},\hat{t}-h)
&\le\xi_1^{y,s,\varepsilon}(\hat{x},\hat{t})-\xi_1^{y,s,\varepsilon}(\hat{x},\hat{t}-h)\\
&= -M_2(|\hat{t}-s|-|\hat{t}-h-s|)\\
&\le M_2h
\end{align*}
for small $h>0$.
Moreover, by setting $p:=\varphi_x(\hat{x},\hat{t})=(\xi_1^{y,s,\varepsilon})_x(\hat{x},\hat{t})$, Proposition \ref{p:another} and the definition of $\rho$ imply
\begin{align*}
&J[\xi_1^{y,s,\varepsilon},p](\hat{x},\hat{t})+K_{(0,\hat{x})}[\xi_1^{y,s,\varepsilon},p](\hat{x},\hat{t})\\
&=-(M_1+M_2+\|f\|_\infty)(J[\rho,\rho'(\hat{x})](\hat{x},\hat{t})+K_{(0,\hat{x})}[\rho,\rho'(\hat{x})](\hat{x},\hat{t}))\\
&=-(M_1+M_2+\|f\|_\infty)(D_x^\alpha\rho)_x(\hat{x})\\
&=M_1+M_2+\|f\|_\infty.
\end{align*}
Thus, we have
\begin{align*}
&\varphi_t(\hat{x},\hat{t})-J[\xi_1^{y,s,\varepsilon},p](\hat{x},\hat{t})-K_{(0,\hat{x})}[\xi_1^{y,s,\varepsilon},p](\hat{x},\hat{t})-f(\hat{x},\hat{t})\\
&\le M_2-(M_1+M_2+\|f\|_\infty)-f(\hat{x},\hat{t})\le 0,
\end{align*}
which means that $\xi_1^{y,s,\varepsilon}$ is a subsolution of \eqref{e:master}.

We next choose $M_1$ and $M_2$ so that
$$
M_1\ge \sup_{x\in[0,l]\setminus\{y\}}\frac{(\omega(|x-y|)-\varepsilon)^+}{\rho(x)},\quad M_2\ge \sup_{t\in[0,T)\setminus\{s\}}\frac{(\omega(|t-s|)-\varepsilon)^+}{|t-s|},
$$
where $\omega:[0,\infty)\to[0,\infty)$ is a continuity modulus of $g$ on $\partial_p Q_T$:
$$
|g(x,t)-g(y,s)|\le \omega(|x-y|+|t-s|)\quad\text{for all $(x,t),(y,s)\in\partial_p Q_T$.}
$$
Then, for all $(x,t)\in\partial_pQ_T$,
\begin{equation}\label{e:bound}
\begin{split}
\xi_1^{y,s,\varepsilon}(x,t)
&\le g(y,s)-2\varepsilon-M_1\rho(x)-M_2|t-s|\\
&\le g(x,t)+\omega(|x-y|)+\omega(|t-s|)-2\varepsilon-M_1\rho(x)-M_2|t-s|\\
&\le g(x,t).
\end{split}
\end{equation}
Thus, $\xi_1^{y,s,\varepsilon}$ satisfies the boundary condition and therefore we see that it is a subsolution of \eqref{e:master}-\eqref{e:initialboundary}.

Now, we define the function $\xi_1$ on $Q_T\cup\partial_pQ_T$ by
$$
\xi_1(x,t)=(\sup\{\xi_1^{y,s,\varepsilon}(x,t)\mid (y,s)\in\{0,l\}\times[0,T),\varepsilon>0\})^*.
$$
The uniformly continuity of $g$ implies that it is bounded in $\{0,l\}\times[0,T)$. Hence $\xi_1^{y,s,\varepsilon}\le\|g\|_\infty<+\infty$ in $Q_T\cup\partial_p Q_T$, that is, $\xi_1$ is bounded from above in $Q_T\cup\partial_pQ_T$.
Thus, Lemma \ref{l:stabsup} together with \eqref{e:bound} guarantee that $\xi_1$ is a subsolution of \eqref{e:master}-\eqref{e:initialboundary}.
Furthermore, \eqref{e:bound} and the fact that $\xi_1(x,t)\ge \sup_{\varepsilon>0}\xi_1^{x,t,\varepsilon}(x,t)=g(x,t)$ for $(x,t)\in\{0,l\}\times[0,T)$ implies that $\xi_1=g$ on $\{0,l\}\times[0,T)$.
Finally, since $\xi_1^{y,s,1}\le \xi_1\le g$ in $Q_T\cup\partial_pQ_T$ for each $(y,s)\in\{0,l\}\times[0,T)$ and $\xi^{y,s,1}_1$ is bounded in $Q_T\cup\partial_pQ_T$, then $\xi_1$ is bounded from below and, as a result , it is bounded in $Q_T\cup\partial_pQ_T$.
\end{proof}

\begin{proposition}\label{p:bottom}
Assume that $f$ is bounded and continuous, and $g$ is uniformly continuous.
Then, there are a bounded subsolution $\xi_2$ and a bounded supersolution $\eta_2$ of \eqref{e:master}-\eqref{e:initialboundary} that satisfy $\xi_2=\eta_2=g$ in $(0,l)\times\{0\}$.
\end{proposition}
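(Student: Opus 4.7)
The plan parallels Proposition \ref{p:lateral}: for each $y_0\in(0,l)$ and $\varepsilon>0$ I would build a smooth classical subsolution $\xi_2^{y_0,\varepsilon}$ that matches $g(y_0,0)$ at $(y_0,0)$ up to $2\varepsilon$ and is bounded above by $g$ on $\partial_pQ_T$, then set $\xi_2:=(\sup_{y_0,\varepsilon}\xi_2^{y_0,\varepsilon})^*$ and invoke Lemma \ref{l:stabsup}; the supersolution $\eta_2$ is produced by the symmetric construction with the signs of the penalties reversed.

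The concrete barrier I would take is a shifted quadratic bump plus a linear time penalty,
\[
\xi_2^{y_0,\varepsilon}(x,t):=g(y_0,0)-2\varepsilon-A(x-y_0)^2-Mt,
\]
with positive constants $A=A(\varepsilon)$ and $M=M(\varepsilon)$ to be chosen. This function lies in $\mathcal{C}(Q_{T,0})$, so by the consistency argument preceding Definition \ref{d:visc} (cf.\ Proposition \ref{p:consistency}) the viscosity subsolution property reduces to the pointwise inequality $(\xi_2^{y_0,\varepsilon})_t\le(D_x^\alpha\xi_2^{y_0,\varepsilon})_x+f$. Using \eqref{e:wellknown} and differentiating term by term,
\[
(D_x^\alpha\xi_2^{y_0,\varepsilon})_x(x,t)=\frac{2Ay_0}{\Gamma(1-\alpha)x^\alpha}-\frac{2Ax^{1-\alpha}}{\Gamma(2-\alpha)},
\]
which, since the first term is non-negative for $y_0>0$, is bounded below on $(0,l)$ by $-2Al^{1-\alpha}/\Gamma(2-\alpha)$. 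The pointwise subsolution inequality therefore holds provided $M\ge 2Al^{1-\alpha}/\Gamma(2-\alpha)+\|f\|_\infty$.

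To fix $A$ and $M$ I would use a concave modulus of continuity $\omega$ of $g$. The initial-line inequality $\xi_2^{y_0,\varepsilon}(x,0)\le g(x,0)$ reduces to $A(x-y_0)^2\ge(\omega(|x-y_0|)-2\varepsilon)^+$; since the positive part vanishes for $|x-y_0|\le r_\varepsilon$, where $r_\varepsilon>0$ is the largest $r$ with $\omega(r)\le 2\varepsilon$, and is bounded by $2\|g\|_\infty$ otherwise, $A\ge 2\|g\|_\infty/r_\varepsilon^2$ suffices. For the lateral condition at $x=0$ (the case $x=l$ is identical), subadditivity gives $g(y_0,0)-g(0,t)\le\omega(y_0)+\omega(t)$; the inequality $Ay_0^2\ge(\omega(y_0)-\varepsilon)^+$ is already ensured by the previous choice of $A$, and $Mt\ge(\omega(t)-\varepsilon)^+$ is achieved by choosing $M$ analogously large. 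Taking $A$ and $M$ so that all three bounds hold simultaneously, $\xi_2^{y_0,\varepsilon}$ is a bounded subsolution of \eqref{e:master}-\eqref{e:initialboundary}.

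Setting $\xi_2:=(\sup_{y_0,\varepsilon}\xi_2^{y_0,\varepsilon})^*$, Lemma \ref{l:stabsup} yields that $\xi_2$ is a subsolution of \eqref{e:master}; the upper bound $\xi_2\le\|g\|_\infty$ is termwise and $\xi_2\ge\xi_2^{y_0^*,1}$ for fixed $y_0^*\in(0,l)$ supplies a lower bound, so $\xi_2$ is bounded. The pointwise inequalities $\xi_2(y_0,0)\ge g(y_0,0)-2\varepsilon$ for every $\varepsilon>0$, combined with $\xi_2\le g$ on $\partial_pQ_T$ (and the corresponding bound on the USC envelope, following the line of argument used at the end of Proposition \ref{p:lateral}), force $\xi_2=g$ on $(0,l)\times\{0\}$. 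The main obstacle I anticipate is that, unlike the lateral barriers of Proposition \ref{p:lateral} built from the natural powers $x^\alpha$ and $x^{1+\alpha}$, the shifted quadratic $(x-y_0)^2$ does not vanish at $x=0$, so $(D_x^\alpha(x-y_0)^2)_x$ carries an $x^{-\alpha}$ singularity as $x\to 0^+$; the construction nevertheless succeeds because this singularity enters with the favourable sign—positive for the subsolution bump $-A(x-y_0)^2$ and, symmetrically, negative for the supersolution bump $+A(x-y_0)^2$—so that it does not obstruct the one-sided equation inequality in either case.
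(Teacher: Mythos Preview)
Your argument is correct and follows the same overall scheme as the paper—build a one-parameter family of smooth barriers touching $g$ at each $(y_0,0)$, then take the supremum and apply Lemma~\ref{l:stabsup}—but you choose a genuinely different spatial profile. The paper uses
\[
\sigma^{y}(x)=\tfrac{1}{\alpha}y^{1+\alpha}-\tfrac{1+\alpha}{\alpha}\,y\,x^{\alpha}+x^{1+\alpha},
\]
which is built from the ``natural'' powers $x^{\alpha}$ and $x^{1+\alpha}$ so that $(D_x^{\alpha}\sigma^{y})_x$ is a \emph{constant}; this makes the pointwise sub/supersolution check a one-line computation with no singular term to control. Your shifted quadratic $(x-y_0)^2$ is more elementary and avoids any $\alpha$-dependent algebra in the barrier, at the cost of producing the $x^{-\alpha}$ singularity in $(D_x^{\alpha}\xi_2^{y_0,\varepsilon})_x$; your key observation—that this singular term carries the favourable sign for both the subsolution bump $-A(x-y_0)^2$ and the supersolution bump $+A(x-y_0)^2$—is exactly what makes the simpler barrier work. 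Either approach closes the proof; the paper's $\sigma^{y}$ is cleaner for later regularity arguments (Proposition~\ref{p:initial}) where one wants explicit control of the constants, whereas your quadratic is perhaps the more natural first guess and requires no special function tailored to the operator.
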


\begin{proof}
For $y\in(0,l)$ and $\varepsilon>0$ we define $\xi_2^{y,\varepsilon}:Q_T\cup\partial_p Q_T\to\mathbb{R}$  by the following formula,
$$
\xi_2^{y,\varepsilon}(x,t)=g(y,0)-2\varepsilon-N_1\sigma^{{y}}(x)-(N_2+\|f\|_\infty)t.
$$
Here $N_1$ and $N_2$ are positive constants to be chosen later and 
\begin{equation}\label{e:sigma}
\sigma^{y}(x)=\frac{1}{\alpha}y^{1+\alpha}-\frac{1+\alpha}{\alpha}yx^{\alpha}+x^{1+\alpha}.
\end{equation}
Subsequently, we will suppress the superindex $y$.
We note that $\xi_2^{y,\varepsilon}\in\mathcal{C}(Q_{T,0})$.

We claim that $\xi_2^{y,\varepsilon}$ is a subsolution of \eqref{e:master}-\eqref{e:initialboundary} if $N_1$ and $N_2$ are taken large enough.
The direct computations using \eqref{e:wellknown} imply
\begin{align*}
(\xi_2^{y,\varepsilon})_t(x,t)-(D_x^\alpha \xi_2^{y,\varepsilon})_x(x,t)-f(x,t)
=-(N_2+\|f\|_\infty)-N_1\Gamma(2+\alpha)-f(x,t)\le0
\end{align*}
for all $(x,t)\in Q_T$. Hence,  the consistency result, see Proposition \ref{p:consistency},
implies that $\xi_2^{y,\varepsilon}$ is a viscosity subsolution of \eqref{e:master}.

We choose $N_1$ and $N_2$ so that
$$
N_1\ge\sup_{x\in[0,l]\setminus\{y\}}\frac{(\omega(|x-y|)-\varepsilon)^+}{\sigma(x)}\quad\text{and}\quad N_2\ge\sup_{t\in[0,T)}\frac{(\omega(t)-\varepsilon)^+}{t}.
$$
where $\omega$ is the continuity modulus of $g$. Then, 
for all $(x,t)\in\partial_pQ_T$ we have
\begin{align*}
\xi_2^{y,\varepsilon}(x,t)
&\le g(y,0)-2\varepsilon-N_1\sigma(x)-N_2t\\
&\le g(x,t)+\omega(|x-y|)+\omega(t)-2\varepsilon-N_1\sigma(x)-N_2t\\
&\le g(x,t).
\end{align*}
Therefore $\xi_2^{y,\varepsilon}$ is a subsolution of \eqref{e:master}-\eqref{e:initialboundary}.

We define the function $\xi_2$ on $Q_T\cup\partial_pQ_T$ by
$$
\xi_2(x,t)=(\sup\{\xi_2^{y,\varepsilon}(x,t)\mid y\in(0,l),\varepsilon>0\})^*
$$
It can be proved with the same idea as for $\xi_1$ in Proposition \ref{p:lateral} that $\xi_2=g$ in $(0,l)\times\{0\}$ and $\xi_2$ is bounded in $Q_T\cup\partial_pQ_T$, so we omit the details here.
\end{proof}

\begin{theorem}\label{t:existence}
Assume that $f$ is bounded and continuous, and $g$ is uniformly continuous.
Then, there exists a bounded solution $u\in C(Q_T\cup\partial_pQ_T)$ of \eqref{e:master}-\eqref{e:initialboundary}.
\end{theorem}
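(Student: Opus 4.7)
The plan is to apply Perron's method (Lemma \ref{l:perron}) with a carefully constructed pair of a global subsolution and global supersolution that attain $g$ on the whole parabolic boundary, and then deduce continuity from the comparison principle.

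First I would combine the partial barriers from Propositions \ref{p:lateral} and \ref{p:bottom}. Set
\[
\xi:=(\max\{\xi_1,\xi_2\})^*\quad\text{and}\quad \eta:=(\min\{\eta_1,\eta_2\})_*
\]
on $Q_T\cup\partial_pQ_T$. Since each $\xi_i$ is a subsolution of \eqref{e:master}, Lemma \ref{l:stabsup} applied to the set $S_-=\{\xi_1,\xi_2\}$ shows $\xi$ is a subsolution of \eqref{e:master}; symmetrically $\eta$ is a supersolution. The boundedness of each $\xi_i,\eta_j$ carries over to $\xi$ and $\eta$. For the boundary values, inspection of the proofs of Propositions \ref{p:lateral} and \ref{p:bottom} (see estimate \eqref{e:bound}) gives $\xi_i\le g\le \eta_j$ on the whole of $\partial_pQ_T$ for $i,j=1,2$, while $\xi_1=\eta_1=g$ on $\{0,l\}\times[0,T)$ and $\xi_2=\eta_2=g$ on $(0,l)\times\{0\}$. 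Taking the pointwise max (resp.\ min) therefore yields $\xi=\eta=g$ on $\partial_pQ_T$, and hence $\xi_*=\eta^*=g$ there as well.

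Next I would verify the inequality $\xi\le\eta$ required by Lemma \ref{l:perron}. Each $\xi_i$ is a subsolution with $\xi_i^*\le g$ on $\partial_pQ_T$ and each $\eta_j$ is a supersolution with $\eta_{j*}\ge g$ there, so the comparison principle (Theorem \ref{t:comparison}) gives $\xi_i^*\le \eta_{j*}$ in $Q_T\cup\partial_pQ_T$ for every $i,j\in\{1,2\}$. Taking the corresponding suprema and infima yields $\xi\le\eta$ globally. Lemma \ref{l:perron} then produces a (possibly discontinuous) solution $u$ of \eqref{e:master}--\eqref{e:initialboundary} satisfying $\xi\le u\le\eta$ and $u=g$ on $\partial_pQ_T$. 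Boundedness of $u$ is immediate from the boundedness of $\xi$ and $\eta$.

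The remaining point, and the only slightly delicate one, is continuity. By construction $u^*$ is a subsolution and $u_*$ a supersolution of \eqref{e:master}, with
\[
u^*\le \eta^*=g=\xi_*\le u_*\quad\text{on }\partial_pQ_T.
\]
Applying the comparison principle (Theorem \ref{t:comparison}) to the pair $(u^*,u_*)$ gives $u^*\le u_*$ on $Q_T\cup\partial_pQ_T$, and the reverse inequality $u_*\le u^*$ is automatic. Hence $u^*=u_*=u$, which is exactly the statement that $u\in C(Q_T\cup\partial_pQ_T)$. The main technical obstacle in this plan lies not in the Perron step itself but in making sure that $\max\{\xi_1,\xi_2\}$ and $\min\{\eta_1,\eta_2\}$ still sit between a subsolution and a supersolution in the right way on the whole parabolic boundary; once Propositions \ref{p:lateral}--\ref{p:bottom} and Theorem \ref{t:comparison} are available, however, this is the straightforward gluing argument sketched above.
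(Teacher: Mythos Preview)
Your proposal is correct and follows essentially the same route as the paper: combine the barriers from Propositions \ref{p:lateral} and \ref{p:bottom} via $\xi=\xi_1\vee\xi_2$, $\eta=\eta_1\wedge\eta_2$, invoke Lemma \ref{l:stabsup} and Theorem \ref{t:comparison} to feed Lemma \ref{l:perron}, and then apply comparison once more to $u^*,u_*$ to upgrade to continuity. The only cosmetic differences are that you take the semicontinuous envelopes of the max/min explicitly and verify $\xi\le\eta$ pairwise via $\xi_i^*\le\eta_{j*}$, whereas the paper applies Theorem \ref{t:comparison} directly to $\xi$ and $\eta$; neither change affects the argument.
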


\begin{proof}
Let $\xi_1$ and $\xi_2$ be subsolutions of \eqref{e:master}-\eqref{e:initialboundary} from Propositions \ref{p:lateral} and \ref{p:bottom}, respectively.
Then, we easily see that $\xi=\xi_1\vee\xi_2$ is a bounded subsolution of \eqref{e:master}-\eqref{e:initialboundary} that satisfies $\xi=g$ on $\partial_pQ_T$.
Similarly, we have a bounded supersolution of the form $\eta:=\eta_1\wedge\eta_2$, which satisfies $\eta=g$ on $\partial_pQ_T$, where $\eta_1$ and $\eta_2$ are supersolutions given in Propositions \ref{p:lateral} and \ref{p:bottom}.
Theorem \ref{t:comparison} implies that $\xi\le\eta$ in $Q_T\cup\partial_pQ_T$.
Thus, by Lemma \ref{l:perron} we have a solution $u$ of \eqref{e:master}-\eqref{e:initialboundary} that satisfies $\xi\le u\le\eta$ in $Q_T\cup\partial_pQ_T$ and $u=g$ on $\partial_pQ_T$.
Using Theorem \ref{t:comparison} again, we see that $u^*\le u_*$ in $Q_T\cup\partial_pQ_T$, while the converse always holds.
Therefore $u$ is continuous in $Q_T\cup\partial_pQ_T$ and it
satisfies 
$$
\lim_{\delta\to0^+}\{|u(y,s)-g(x,t)|\mid(y,s)\in (Q_T\cup\partial_pQ_T)\cap\overline{B_\delta(x,t)}\}=0
$$
for each $(x,t)\in\partial_pQ_T$.
\end{proof}


\section{Stability}

The solution constructed in the previous section has a good stability property.
In this section we establish two typical results, one of which shows consistency with viscosity solution in the integer-order case.

\begin{theorem}\label{t:staborder}
Assume that $f$ is continuous.
For $\alpha\in(0,1)$ let $u_\alpha$ be a subsolution (resp. a supersolution) of \eqref{e:master} in which the fractional order is $\alpha$.
Let $\beta\in[0,1]$ and set
$$
\overline{u}_\beta=\limsup_{\alpha\to\beta,\alpha\neq\beta}{}^{*}u_\alpha\quad\text{(resp. $\underline{u}_\beta=\liminf_{\alpha\to\beta,\alpha\neq\beta}{}_{*}u_\alpha$).}
$$
If $\overline{u}_\beta<+\infty$ (resp. $\underline{u}_\beta>-\infty$), then $\overline{u}_\beta$  (resp. $\underline{u}_\beta$)
is a subsolution (resp. supersolution)  in $Q_{T,0}$ of \eqref{e:master} in which the fractional order is $\beta$. Here subsolutions $\overline{u}_0$ and $\overline{u}_1$ (resp. supersolutions $\underline{u}_0$ and $\underline{u}_1$) are in the usual viscosity sense.
\end{theorem}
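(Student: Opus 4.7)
The plan is to adapt the stability argument used in Lemma \ref{l:stabsup}, now allowing the fractional order itself to vary. I focus on the subsolution case, the supersolution case being symmetric.

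For the main case $\beta \in (0,1)$, fix $\varphi \in C^{2,1}(Q_T)$ and $(\hat x, \hat t) \in Q_T$ so that $\overline{u}_\beta - \varphi$ attains a strict maximum on $Q_T$ at $(\hat x, \hat t)$. By the standard half-relaxed-limits selection (cf. \cite[Lemma V.1.6]{BardiCapuzzoDolcetta} and the reasoning producing \eqref{e:comp1}), applied jointly in the parameter $\alpha$, one extracts $\alpha_\varepsilon \to \beta$ with $\alpha_\varepsilon \neq \beta$, a neighborhood $N \in \mathcal{N}_{\hat x, \hat t}$ with compact closure, and local maximum points $(x_\varepsilon, t_\varepsilon) \in N$ of $u_{\alpha_\varepsilon}^* - \varphi$ with $(x_\varepsilon, t_\varepsilon, u_{\alpha_\varepsilon}^*(x_\varepsilon, t_\varepsilon)) \to (\hat x, \hat t, \overline{u}_\beta(\hat x, \hat t))$. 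Fixing $\delta \in (0, \hat x)$ with $(\delta, \hat t) \in N$, Proposition \ref{p:localstrict} at the fractional order $\alpha_\varepsilon$ gives
\begin{align*}
\varphi_t(x_\varepsilon, t_\varepsilon) \le{}&J^{\alpha_\varepsilon}[u_{\alpha_\varepsilon}^*, p_\varepsilon](x_\varepsilon, t_\varepsilon) + K^{\alpha_\varepsilon}_{(0, x_\varepsilon - \delta)}[\varphi, p_\varepsilon](x_\varepsilon, t_\varepsilon) \\
& + K^{\alpha_\varepsilon}_{(x_\varepsilon - \delta, x_\varepsilon)}[u_{\alpha_\varepsilon}^*, p_\varepsilon](x_\varepsilon, t_\varepsilon) + f(x_\varepsilon, t_\varepsilon),
\end{align*}
with $p_\varepsilon = \varphi_x(x_\varepsilon, t_\varepsilon)$. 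By construction $u_{\alpha_\varepsilon}^* \le \overline{u}_\beta$ on $N$, so Lemma \ref{l:important} applies term by term; passing to the limit supremum as $\varepsilon \to 0$ yields the subsolution inequality at order $\beta$ in the form of Proposition \ref{p:alter}(iii), which identifies $\overline{u}_\beta$ as a subsolution at order $\beta$.

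For the endpoint cases $\beta \in \{0,1\}$, the same selection of $(x_\varepsilon, t_\varepsilon)$ applies, but one must show that $J^{\alpha_\varepsilon}[\varphi, p_\varepsilon] + K^{\alpha_\varepsilon}_{(0, x_\varepsilon)}[\varphi, p_\varepsilon]$ converges to $\varphi_{xx}(\hat x, \hat t)$ when $\beta = 1$ and to $\varphi_x(\hat x, \hat t)$ when $\beta = 0$. This is a direct calculation from \eqref{e:another}: the boundary term $J^\alpha[\varphi, p]$ is annihilated by the prefactor $1/\Gamma(1-\alpha)$ as $\alpha \to 1^-$, while the nonlocal integral, combined with the Taylor expansion $\varphi(\hat x - z) - \varphi(\hat x) + \varphi'(\hat x)z = \tfrac{1}{2}\varphi''(\hat x)z^2 + o(z^2)$ and the identity $\int_0^{\hat x} z^{-\alpha}\,dz = \hat x^{1-\alpha}/(1-\alpha)$, converges to $\varphi_{xx}(\hat x, \hat t)$; the case $\beta = 0$ is analogous. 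The far-field contribution involving $u_{\alpha_\varepsilon}^*$ is controlled by first sending $\varepsilon \to 0$ (using the upper semicontinuous envelope) and then $\delta \to 0$.

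The main obstacle is precisely these endpoint cases, where Lemma \ref{l:important} is not applicable (it requires $\alpha, \alpha_\varepsilon \in (0,1)$) and one must directly perform the asymptotic analysis of $J^\alpha$ and $K^\alpha$, balancing the singularity of $\Gamma(1-\alpha)^{-1}$ against the degeneracy of the kernel $z^{-\alpha-2}\,dz$. Once the endpoint limit operators are identified as $\varphi_x$ and $\varphi_{xx}$, the remaining machinery is the same boilerplate stability argument as in the case $\beta \in (0,1)$.
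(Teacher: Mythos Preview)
Your treatment of the case $\beta \in (0,1)$ is correct and matches the paper's argument: the half-relaxed-limit selection followed by Lemma~\ref{l:important} and Proposition~\ref{p:localstrict} is exactly what is done there.

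The endpoint cases $\beta \in \{0,1\}$ are where your sketch becomes imprecise. You write that one must show $J^{\alpha_\varepsilon}[\varphi,p_\varepsilon] + K^{\alpha_\varepsilon}_{(0,x_\varepsilon)}[\varphi,p_\varepsilon]$ converges to $\varphi_x$ or $\varphi_{xx}$, but in the inequality supplied by Proposition~\ref{p:localstrict} it is $J[u_{\alpha_\varepsilon}^*,p_\varepsilon]$ and $K_{(x_\varepsilon-\delta,x_\varepsilon)}[u_{\alpha_\varepsilon}^*,p_\varepsilon]$ that appear, not their $\varphi$-counterparts; you then separately refer to a ``far-field contribution involving $u_{\alpha_\varepsilon}^*$'', which suggests you have not settled on a single formulation. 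Moreover, $\varphi$ is only taken in $C^{2,1}(Q_T)$, so $\varphi(0,t)$ need not be defined and $J[\varphi,p]$ is not available as written.

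The paper sidesteps these issues by abandoning the $(J,K)$ decomposition at the endpoints. It replaces $\varphi$ by a global test function $\psi \in C^{2,1}(Q_T \cup \partial_p Q_T)$ that agrees with $\varphi$ in a ball $B_r(\hat x,\hat t)$, touches $u_{\alpha_\varepsilon}^*$ from above on all of $Q_{T,0}$, and satisfies $|\psi_x(x,t)| \le C x^{1+\nu}$ near $x=0$. One then uses the viscosity inequality in its original form $\psi_t(x_\varepsilon,t_\varepsilon) \le (D_x^{\alpha_\varepsilon}\psi)_x(x_\varepsilon,t_\varepsilon) + f(x_\varepsilon,t_\varepsilon)$ and invokes a separate uniform-convergence lemma (Lemma~\ref{p:convergence}) showing $(D_x^\alpha \psi)_x \to \psi_x$ as $\alpha \to 0^+$ and $(D_x^\alpha \psi)_x \to \psi_{xx}$ as $\alpha \to 1^-$, uniformly in $(x,t)$. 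The vanishing condition on $\psi_x$ at $x=0$ is precisely what makes this lemma applicable; without it the boundary contribution does not disappear cleanly. Your heuristic Taylor argument for the limit of $K^{\alpha}$ is morally the content of Lemma~\ref{p:convergence}, but you omit the construction of $\psi$, which is the step that makes the limit rigorous and removes all dependence on $u_{\alpha_\varepsilon}^*$.
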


In this theorem $\overline{u}_\beta$ and $\underline{u}_\beta$ stand for the upper half-relaxed limit and the lower half-relaxed limit, respectively. Namely,
\begin{align*}
\overline{u}_\beta(x,t)
&=(\limsup_{\alpha\to\beta,\alpha\neq\beta}{}^{*}u_\alpha)(x,t)\\
&=\lim_{\delta\to0^+}\sup\{u_\alpha(y,s)\mid
(y,s)\in Q_{T,0}\cap\overline{B_\delta(x,t)},\ 0<|\alpha-\beta|<\delta\}
\end{align*}
and $\underline{u}_\beta:=-\overline{(-u)}_\beta$.
Note that $\overline{u}_\beta:Q_{T,0}\to\mathbb{R}\cup\{+\infty\}$ is upper semicontinuous and $\underline{u}_\beta:Q_{T,0}\to\mathbb{R}\cup\{-\infty\}$ is lower semicontinuous; cf.  \cite[Lemma V.1.5]{BardiCapuzzoDolcetta}.

The proof of Theorem \ref{t:staborder} is based on two lemmas. Here is the first one:
\begin{lemma}\label{p:bascifact}
Let $\beta\in(0,1)$.
For $\alpha\in(0,1)$ let $u_\alpha:Q_{T,0}\to\mathbb{R}$ be an upper semicontinuous function.
Assume that $\overline{u}_\beta<+\infty$ in $Q_{T,0}$ and $\overline{u}_\beta-\varphi$ attains a strict maximum on $Q_T$ at $(\hat{x},\hat{t})\in Q_T$ for $\varphi\in C^{2,1}(Q_T)$.
Let $\delta\in(0,\hat{x})$ be a constant.
Then, there exists a neighborhood  $N\in\mathcal{N}_{\hat{x},\hat{t}}$ with the compact closure, sequences $(x_\varepsilon,t_\varepsilon)\in N$ and $\alpha_\varepsilon\in(0,1)\setminus\{\beta\}$ such that $(x_\varepsilon,t_\varepsilon)$ is a maximum point for $u_{\alpha_\varepsilon}^*-\varphi$ on $N$ and
$$
\lim_{\varepsilon\to0}(\alpha_\varepsilon,x_\varepsilon,t_\varepsilon,u_{\alpha_\varepsilon}^*(x_\varepsilon,t_\varepsilon))=(\beta,\hat{x},\hat{t},\overline{u}_\beta(\hat{x},\hat{t})).
$$ 
\end{lemma}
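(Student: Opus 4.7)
The plan is to construct $N$ as a small product rectangle adapted to $\delta$, then use a diagonal extraction based on the definition of the upper half-relaxed limit, and finally apply the strict-maximum hypothesis to pin down the limit of the extracted sequence.

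First, I would fix a small $\eta > 0$ with $\eta < \min\{\delta, l - \hat{x}, \hat{t}, T - \hat{t}\}$ and set $N = (\delta - \eta, \hat{x} + \eta) \times (\hat{t} - \eta, \hat{t} + \eta)$. Then $\overline{N} \subset Q_T$ is compact, $(\delta, \hat{t}) \in N$, and $N \in \mathcal{N}_{\hat{x}, \hat{t}}$, since for any $(y, \hat{t}) \in N$ with $0 < y < \hat{x}$ the horizontal segment to $(\hat{x}, \hat{t})$ is contained in the product $N$. Set $M := (\overline{u}_\beta - \varphi)(\hat{x}, \hat{t})$; by hypothesis this is the strict maximum of $\overline{u}_\beta - \varphi$ on $Q_T$ and hence on $\overline{N}$.

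Second, for each $\varepsilon \in (0, \eta)$, I would invoke the definition of $\overline{u}_\beta(\hat{x}, \hat{t})$ as a half-relaxed limit to choose $\alpha_\varepsilon \in (0,1) \setminus \{\beta\}$ with $|\alpha_\varepsilon - \beta| < \varepsilon$ and $(y_\varepsilon, s_\varepsilon) \in N$ with $|(y_\varepsilon, s_\varepsilon) - (\hat{x}, \hat{t})| < \varepsilon$ such that $u_{\alpha_\varepsilon}(y_\varepsilon, s_\varepsilon) > \overline{u}_\beta(\hat{x}, \hat{t}) - \varepsilon$. Since $u_{\alpha_\varepsilon}$ is upper semicontinuous on the compact set $\overline{N}$ and $\varphi \in C^{2,1}$, the difference $u_{\alpha_\varepsilon} - \varphi$ attains its maximum on $\overline{N}$; let $(x_\varepsilon, t_\varepsilon) \in \overline{N}$ be a maximiser and set $m_\varepsilon := (u_{\alpha_\varepsilon} - \varphi)(x_\varepsilon, t_\varepsilon)$. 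Comparing with $(y_\varepsilon, s_\varepsilon)$ yields the lower bound $\liminf_{\varepsilon \to 0} m_\varepsilon \ge M$.

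Third, for the matching upper bound and the convergence of the maximisers, I would take any subsequence $(x_{\varepsilon_k}, t_{\varepsilon_k})$ converging (by compactness of $\overline{N}$) to some $(x^*, t^*) \in \overline{N}$. The definition of the upper half-relaxed limit at $(x^*, t^*)$, applied to the triples $(\alpha_{\varepsilon_k}, x_{\varepsilon_k}, t_{\varepsilon_k})$ with $\alpha_{\varepsilon_k} \to \beta$, $\alpha_{\varepsilon_k} \ne \beta$, and $(x_{\varepsilon_k}, t_{\varepsilon_k}) \to (x^*, t^*)$, yields $\limsup_k u_{\alpha_{\varepsilon_k}}(x_{\varepsilon_k}, t_{\varepsilon_k}) \le \overline{u}_\beta(x^*, t^*)$. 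Hence $\limsup_k m_{\varepsilon_k} \le \overline{u}_\beta(x^*, t^*) - \varphi(x^*, t^*) \le M$, which combined with the lower bound forces $(\overline{u}_\beta - \varphi)(x^*, t^*) = M$ and, by the strict-maximum hypothesis, $(x^*, t^*) = (\hat{x}, \hat{t})$. Since every cluster point of $(x_\varepsilon, t_\varepsilon)$ coincides with $(\hat{x}, \hat{t})$, the whole sequence converges to $(\hat{x}, \hat{t})$; in particular it eventually lies in the open set $N$, so after discarding finitely many indices $(x_\varepsilon, t_\varepsilon)$ is a maximiser of $u^*_{\alpha_\varepsilon} - \varphi$ on $N$, and $u^*_{\alpha_\varepsilon}(x_\varepsilon, t_\varepsilon) = u_{\alpha_\varepsilon}(x_\varepsilon, t_\varepsilon) = m_\varepsilon + \varphi(x_\varepsilon, t_\varepsilon) \to M + \varphi(\hat{x}, \hat{t}) = \overline{u}_\beta(\hat{x}, \hat{t})$.

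The main obstacle, though mild, is establishing the upper bound $\limsup_k u_{\alpha_{\varepsilon_k}}(x_{\varepsilon_k}, t_{\varepsilon_k}) \le \overline{u}_\beta(x^*, t^*)$ directly from the definition of the half-relaxed limit and then exploiting the strict-maximum hypothesis to upgrade subsequential convergence of the maximisers to convergence of the full sequence. One must also take care that $\alpha_\varepsilon \ne \beta$ is preserved throughout, which is precisely why this restriction is built into the supremum defining $\overline{u}_\beta$.
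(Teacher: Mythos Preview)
Your proposal is correct and carries out precisely the standard argument the paper refers to; the paper's own proof consists only of the sentence ``The proof is a trivial modification of \cite[Lemma V.1.6]{BardiCapuzzoDolcetta},'' and your three steps (construct a product neighborhood $N\in\mathcal{N}_{\hat x,\hat t}$ containing $(\delta,\hat t)$, extract approximate maximizers via the definition of the half-relaxed limit, and use the strict-maximum hypothesis to force convergence of the maximizers and their values) are exactly that modification made explicit. The only cosmetic point is that since each $u_\alpha$ is already upper semicontinuous you have $u_{\alpha_\varepsilon}^*=u_{\alpha_\varepsilon}$, so your passage between the two is harmless.
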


\begin{proof}
The proof is a trivial modification of \cite[Lemma V.1.6]{BardiCapuzzoDolcetta}.
\end{proof}

\begin{lemma}\label{p:convergence}
Let $\{f(\cdot;\lambda)\}_{\lambda\in \Lambda}\subset C^2[0,l]$, where $\Lambda$ is an index set.
Assume that 
$$\sup_{\lambda\in\Lambda}\|f'(\cdot;\lambda)\|_\infty<+\infty
$$ 
and $\sup_{\lambda\in\Lambda}|f'(x;\lambda)|\le Cx^{1+\nu}$ for a constant $C>0$ and $\nu>0$ as $x\to0^+$.
Then, 
$$
\lim_{\alpha\to 0^+}\sup_{(x,\lambda)\in [0,l]\times\Lambda}|(D_x^\alpha f)_x(x;\lambda)-f'(x;\lambda)|=0
$$
and
$$
\lim_{\alpha\to 1^-}\sup_{(x,\lambda)\in [0,l]\times\Lambda}|(D_x^\alpha f)_x(x;\lambda)-f''(x;\lambda)|=0.
$$
\end{lemma}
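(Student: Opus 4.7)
The strategy is to first collapse $(D^\alpha_x f)_x$ to a single Riemann--Liouville fractional integral of $f''$. The bound $|f'(x;\lambda)|\le Cx^{1+\nu}$ as $x\to 0^+$ forces $f'(0;\lambda)=0$ (and in fact $f''(0;\lambda)=0$) for every $\lambda$. Under $f'(0;\lambda)=0$, substituting $z=x-y$ in the Caputo integral and then differentiating under the integral sign -- equivalently, reorganizing the expression given by Proposition~\ref{p:another} -- yields the clean identity
$$
(D^\alpha_x f)_x(x;\lambda)\;=\;\frac{1}{\Gamma(1-\alpha)}\int_0^x (x-y)^{-\alpha}\,f''(y;\lambda)\,dy\;=\;I^{1-\alpha}[f''(\cdot;\lambda)](x).
$$
The lemma is then precisely the continuity of $\beta\mapsto I^{\beta}g$ at the endpoints $\beta=1$ (yielding $I^{1}g=f'$, because $f'(0)=0$) and $\beta=0$ (yielding $I^{0}g=g=f''$), applied uniformly in $\lambda$ to $g=f''(\cdot;\lambda)$.

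For $\alpha\to 0^+$, using $f'(x;\lambda)=\int_0^x f''(y;\lambda)\,dy$, the error takes the form
$$
(D^\alpha_x f)_x(x;\lambda)-f'(x;\lambda)=\int_0^x f''(y;\lambda)\,k_\alpha(x-y)\,dy,\qquad k_\alpha(z):=\frac{z^{-\alpha}}{\Gamma(1-\alpha)}-1.
$$
A short Taylor expansion in $\alpha$ near $0$ gives $k_\alpha(z)=-\alpha(\log z+\gamma)+O(\alpha^2(1+\log^2 z))$, where $\gamma$ is the Euler constant, and hence $\|k_\alpha\|_{L^1(0,l)}=O(\alpha)$. Combined with a uniform sup bound on $f''$, this yields the claimed uniform convergence in $(x,\lambda)$.

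For $\alpha\to 1^-$, the same kernel $(x-y)^{-\alpha}/\Gamma(1-\alpha)$ plays the role of a concentrating approximate identity at $y=x$, with total mass $x^{1-\alpha}/\Gamma(2-\alpha)\to 1$. I would split
$$
(D^\alpha_x f)_x(x;\lambda)-f''(x;\lambda)=\int_0^x [f''(y;\lambda)-f''(x;\lambda)]\frac{(x-y)^{-\alpha}}{\Gamma(1-\alpha)}\,dy\;+\;f''(x;\lambda)\!\left[\frac{x^{1-\alpha}}{\Gamma(2-\alpha)}-1\right]
$$
and treat the two pieces separately. The bracket tends to $0$ uniformly on $[0,l]$. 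For the first piece I would carry out the standard approximate-identity split at a small width $\delta>0$: on $[x-\delta,x]$ the integrand is controlled by the family's oscillation modulus times the kernel mass, while on $[0,x-\delta]$ the kernel is itself $O(\delta^{-\alpha}(1-\alpha))$ after integrating, which can be absorbed using the uniform sup bound on $f''$.

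The main technical obstacle is that both halves of the argument rely on uniformity in $\lambda$ of objects involving $f''$ -- a uniform $L^\infty$ bound for the $\alpha\to 0^+$ case, and a uniform equicontinuity modulus for the $\alpha\to 1^-$ case. Neither is stated explicitly among the hypotheses, so in the writeup I would either invoke these as supplementary assumptions or, as is natural in the intended application to Theorem~\ref{t:staborder}, verify them directly for the specific family to which the lemma is applied (typically a single test function $\varphi\in C^{2,1}$ whose $x$-sections are translated/localized, for which both uniformities hold trivially on compact subsets).
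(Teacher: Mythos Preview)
Your approach is essentially the same as the paper's: both reduce $(D_x^\alpha f)_x$ to the Riemann--Liouville integral $J^{1-\alpha}f''$ (using $f'(0;\lambda)=0$, which follows from the decay hypothesis) and then appeal to the continuity of $\beta\mapsto J^\beta$ at $\beta=1$ and $\beta=0$. The paper defers the single-$\lambda$ case to \cite[Theorem~2.10]{Diethelm} and asserts that the uniformity in $\lambda$ follows ``similarly'' from the uniform bounds on $f'$; you spell out the kernel estimates directly ($\|k_\alpha\|_{L^1}=O(\alpha)$ for $\alpha\to0^+$, approximate-identity splitting for $\alpha\to1^-$).

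Your observation about the missing hypotheses on $f''$ is correct and applies equally to the paper's proof. The lemma as stated gives no uniform-in-$\lambda$ control on $f''$, yet both arguments require a uniform bound $\sup_\lambda\|f''(\cdot;\lambda)\|_\infty<\infty$ for $\alpha\to0^+$ and a common modulus of continuity for $\{f''(\cdot;\lambda)\}_\lambda$ for $\alpha\to1^-$. The paper's remark that the uniform $f'$ bounds suffice is not justified and appears to be an oversight. Your proposed remedy---either to add these as explicit hypotheses or to verify them directly in the application, where the family consists of $t$-sections of a fixed $\psi\in C^{2,1}$ over a compact $t$-interval and both uniformities are automatic---is the right way to close the gap, and is implicitly what makes the lemma usable in Theorem~\ref{t:staborder}.
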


\begin{proof}
If $f$ does not depend on $\lambda$, this proposition follows easily from known facts.
Indeed, if we denote the Riemann-Liouville derivative by ${}^{RL}D_x^\alpha$, i.e., 
$$
{}^{RL}D_x^\alpha g(x)=\frac{1}{\Gamma(1-\alpha)}\frac{d}{dx}\int_0^x\frac{g(y)}{(x-y)^\alpha}dy
$$
for a function $g\in C^1(0,l)$, then $(D_x^\alpha f)_x(x)={}^{RL}D_x^\alpha {f'}(x)$, where $f(\cdot):=f(\cdot;\lambda)$.
Using the known formula (see \cite[Lemma 3.4]{Diethelm} for example) we have 
$$
{}^{RL}D_x^\alpha f'(x)=\frac{f'(0)}{\Gamma(1-\alpha)x^\alpha}+D_x^\alpha f'(x)=D_x^\alpha f'(x),
$$
where we used the fact $f'(0)=0$ by the assumption.
Let $J^{1-\alpha}$ denote the Riemann-Liouville integral:
$$
J^{1-\alpha}g(x)=\frac{1}{\Gamma(1-\alpha)}\int_0^x\frac{g(y)}{(x-y)^\alpha}dy
$$
for a given function $g$, where $J^0$ is the identity operator.
Then $D_x^\alpha f'(x)=J^{1-\alpha}f''(x)$.
According to \cite[Theorem 2.10]{Diethelm} we see that under the assumptions of this proposition, 
\begin{equation}\label{e:convergence1}
\lim_{\alpha\to0^+}\sup_{x\in[0,l]}|J^{1-\alpha}f''(x)-J^1f''(x)|=0
\end{equation}
and
\begin{equation}\label{e:convergence2}
\lim_{\alpha\to1^-}\sup_{x\in[0,l]}|J^{1-\alpha}f''(x)-J^0f''(x)|=0.
\end{equation}
Since $J^1f''(x)=f'(x)-f'(0)=f'(x)$ and $J^0f''(x)=f''(x)$, the conclusion in the case that $f$ does not depend on $\lambda$ turns out.

If $f$ depends on $\lambda$, then it is necessary to show that the convergence is also uniform in $\lambda$. Taking into account the above argument, it suffices to prove that the limits \eqref{e:convergence1} and \eqref{e:convergence2} are uniform in $\lambda$.
However, the proof is quite similar to that of \cite[Theorem 2.10]{Diethelm}. It exploits the fact that we have the bounds on $f'$, which are  uniform with respect to $\lambda$. We leave the details of the proof to the reader.
\end{proof}

\begin{proof}[Proof of Theorem \ref{t:staborder}]
Since the proof for supersolutions is similar, we give the proof for subsolutions.

Let $((\hat{x},\hat{t}),\varphi)\in Q_T\times C^{2,1}(Q_T)$ be such that $\bar{u}_\beta-\varphi$ attains a strict maximum on $Q_T$ at $(\hat{x},\hat{t})$. Fix $\delta\in(0,\hat{x})$ arbitrarily.
By Lemma \ref{p:bascifact} we have a neighborhood $N\in\mathcal{N}_{\hat{x},\hat{t}}$ 
with compact closure and $(\delta,\hat{t})\in N$,  sequences $(x_\varepsilon,t_\varepsilon)\in N$ and $\alpha_\varepsilon\in(0,1)\setminus\{\beta\}$ such that $(x_\varepsilon,t_\varepsilon)$ is a maximum point for $u_{\alpha_\varepsilon}^*-\varphi$ on $N$ and
\begin{equation}\label{e:stability0}
\lim_{\varepsilon\to0^+}(\alpha_\varepsilon,x_\varepsilon,t_\varepsilon,u_{\alpha_\varepsilon}^*(x_\varepsilon,t_\varepsilon))=(\beta,\hat{x},\hat{t},\overline{u}_\beta(\hat{x},\hat{t})).
\end{equation}
We may assume that $\delta<\inf_\varepsilon x_\varepsilon$.
Note that $u_{\alpha_\varepsilon}^*\le \bar{u}_\beta$ in $Q_{T,0}$ by definition of $\overline{u}_\beta$.

The case of $\beta\neq0,1$ is easy since we can use Lemma \ref{l:important}.
In fact, since $u_{\alpha_\varepsilon}$ is a subsolution of \eqref{e:master} with the fractional order $\alpha_\varepsilon$,
\begin{align*}
\varphi_t(x_\varepsilon,t_\varepsilon)\le& J^{\alpha_\varepsilon}[u_{\alpha_\varepsilon}^*,p_\varepsilon](x_\varepsilon,t_\varepsilon)+K_{(0,x_\varepsilon-\delta)}^{\alpha_\varepsilon}[\varphi,p_\varepsilon](x_\varepsilon,t_\varepsilon)\\
&+K_{(x_\varepsilon-\delta,x_\varepsilon)}^{\alpha_\varepsilon}[u_{\alpha_\varepsilon}^*,p_\varepsilon](x_\varepsilon,t_\varepsilon)+f(x_\varepsilon,t_\varepsilon),
\end{align*}
where $p_\varepsilon=\varphi_x(x_\varepsilon,t_\varepsilon)$.
Thus, we send $\varepsilon\to0$ to get
\begin{align*}
\varphi_t(\hat{x},\hat{t})\le& J^\beta[\bar{u}_\beta,p](\hat{x},\hat{t})+K_{(0,\hat{x}-\delta)}^\beta[\varphi,p](\hat{x},\hat{t})\\
&+K_{(\hat{x}-\delta,\hat{x})}^\beta[\bar{u}_\beta,p](\hat{x},\hat{t})+f(\hat{x},\hat{t})\quad\text{with $p=\varphi_x(\hat{x},\hat{t})$},
\end{align*}
which is the desired inequality.

Let $\beta=0$ or $\beta=1$.
Let $r>0$ be a constant such that $\overline{B_r(\hat{x},\hat{t})}\subset Q_T$.
We may assume that $(x_\varepsilon,t_\varepsilon)\in B_r(\hat{x},\hat{t})$ for all $\varepsilon$ because of \eqref{e:stability0}.
We may also take $\psi\in C^{2,1}(Q_T\cup\partial_pQ_T)$ that satisfies $\psi=\varphi$ in $B_r(\hat{x},\hat{t})$, $\max_{Q_{T,0}}(u_{\alpha_\varepsilon}-\psi)=(u_{\alpha_\varepsilon}-\psi)(x_\varepsilon,t_\varepsilon)$, $\sup_{t\in[\hat{t}-r,\hat{t}+r]}\|\psi_x(\cdot,t)\|_\infty<\infty$, and $\sup_{t\in[\hat{t}-r,\hat{t}+r]}|\psi_x(\cdot,t)|\le Cx^{1+\nu}$ for some $C>0$ and $\nu>0$ as $x\to0^+$.
Then we have
$$
\psi_t(x_\varepsilon,t_\varepsilon)\le (D_x^\alpha\psi)_x(x_\varepsilon,t_\varepsilon)+f(x_\varepsilon,t_\varepsilon).
$$
In order to pass to the limit with $\varepsilon\to0$ we invoke Proposition \ref{p:convergence} with $f(x;\lambda)=\psi(x,t)$ and $\Lambda=[\hat{t}-r,\hat{t}+r]$. This leads us to
\begin{equation}\label{e:stability1}
\psi_t(\hat{x},\hat{t})\le \psi_x(\hat{x},\hat{t})+f(\hat{x},\hat{t})\quad\text{if $\beta=0$}
\end{equation}
and
$$
\psi_t(\hat{x},\hat{t})\le \psi_{xx}(\hat{x},\hat{t})+f(\hat{x},\hat{t})\quad\text{if $\beta=1$.}
$$
These are desired inequalities since $\psi_t=\varphi_t$ and $\psi_{xx}=\varphi_{xx}$ at $(\hat{x},\hat{t})$.
\end{proof}

\begin{remark}
In the case of $\beta=0$, we usually take test functions from $C^1(Q_T)$, as a result one may think that the above proof is not complete. However, 
in the definition of viscosity solutions, we may use test functions 
having higher order derivatives; cf., e.g., \cite[Proposition 2.2.3]{Giga}. Therefore, we conclude that $\overline{u}_0$ is a subsolution after obtaining \eqref{e:convergence1}.
\end{remark}

\begin{proposition}
Let $u:Q_T\cup\partial_pQ_T\to\mathbb{R}$ be a solution of
\begin{equation}
\begin{cases}
u_t=(D_x^\alpha u)_x+f\quad&\text{in $Q_T$,}\\
u=g\quad&\text{on $\partial_pQ_T$,}
\end{cases}
\end{equation}
where $f\in C(Q_T)$ and $g\in C(\partial_pQ_T)$.
For $\varepsilon>0$ let $u_\varepsilon:Q_T\cup\partial_pQ_T\to\mathbb{R}$ be a solution of
\begin{equation}
\begin{cases}
(u_\varepsilon)_t=(D_x^\alpha u_\varepsilon)_x+f_\varepsilon\quad&\text{in $Q_T$,}\\
u_\varepsilon=g_\varepsilon\quad&\text{on $\partial_pQ_T$.}
\end{cases}
\end{equation}
Here $f_\varepsilon\in C(Q_T)$ and $g_\varepsilon\in C(\partial_pQ_T)$.
Assume that $\|g_\varepsilon-g\|_\infty\to0$ as $\varepsilon\to0$.
Then $\lim_{\varepsilon\to0}\|u_\varepsilon-u\|_\infty=0$.
\end{proposition}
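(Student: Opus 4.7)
The plan is to derive this stability result as an immediate consequence of the weak maximum principle (Corollary \ref{c:contraction}), since $u_\varepsilon$ and $u$ are both sub- and supersolutions of their respective equations with essentially the same operator structure.

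First, I would apply Corollary \ref{c:contraction} regarding $u_\varepsilon$ as a subsolution of
$(u_\varepsilon)_t = (D_x^\alpha u_\varepsilon)_x + f_\varepsilon$ and $u$ as a supersolution of $u_t = (D_x^\alpha u)_x + f$. Since both are continuous (by Theorem \ref{t:existence}) the envelopes agree with the functions themselves, so (\ref{e:contraction}) yields
\begin{equation*}
\sup_{Q_T\cup\partial_pQ_T}(u_\varepsilon - u)^+ \;\le\; \sup_{\partial_pQ_T}(g_\varepsilon - g)^+ + \int_0^T \sup_{x\in(0,l)}(f_\varepsilon - f)^+(x,s)\,ds.
\end{equation*}
Then I would swap the roles of $u_\varepsilon$ and $u$ and apply the same inequality to obtain
\begin{equation*}
\sup_{Q_T\cup\partial_pQ_T}(u - u_\varepsilon)^+ \;\le\; \sup_{\partial_pQ_T}(g - g_\varepsilon)^+ + \int_0^T \sup_{x\in(0,l)}(f - f_\varepsilon)^+(x,s)\,ds.
\end{equation*}
Adding (and taking the max of) these two bounds I would get
\begin{equation*}
\|u_\varepsilon - u\|_\infty \;\le\; \|g_\varepsilon - g\|_\infty + \int_0^T \|f_\varepsilon(\cdot,s) - f(\cdot,s)\|_{L^\infty(0,l)}\,ds,
\end{equation*}
from which the conclusion $\|u_\varepsilon - u\|_\infty \to 0$ would follow immediately under the stated convergence assumptions.

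The main obstacle I foresee is not really in the argument but in the hypotheses: as stated the proposition only assumes $\|g_\varepsilon - g\|_\infty \to 0$ but says nothing quantitative about $f_\varepsilon \to f$. The proof visibly requires some form of convergence of $f_\varepsilon$ to $f$ (for instance, $f_\varepsilon \to f$ uniformly in $Q_T$, or at least in $L^1(0,T;L^\infty(0,l))$), without which the source-term contribution in Corollary \ref{c:contraction} cannot be controlled. I would flag this and proceed under the natural reading that $\|f_\varepsilon - f\|_\infty \to 0$ is also assumed, in which case the second term is bounded by $T\|f_\varepsilon - f\|_\infty \to 0$ and the proof is complete. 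The only delicate point to verify is that Corollary \ref{c:contraction} applies with the actual source differences (not merely the bounds $f_1 \le 0$ or $f_2 \ge 0$ used in its corollary form), which is exactly what the first displayed estimate (\ref{e:contraction}) provides.
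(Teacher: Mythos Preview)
Your proposal is correct and takes essentially the same approach as the paper: apply Corollary \ref{c:contraction} in both directions and combine to obtain the bound $\|u_\varepsilon-u\|_\infty \le \|g_\varepsilon-g\|_\infty + \int_0^T\|f_\varepsilon(\cdot,s)-f(\cdot,s)\|_{L^\infty(0,l)}\,ds$, then let $\varepsilon\to 0$. You also correctly flag that the statement, as written, omits a convergence hypothesis on $f_\varepsilon$; the paper's own proof tacitly uses $\int_0^T\sup_{x}|f_\varepsilon-f|\,ds\to 0$ without stating it, so your reading that such an assumption is intended is the right one.
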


\begin{proof}
Corollary \ref{c:contraction} implies that
$$
\sup_{Q_T\cup\partial_pQ_T}|u_\varepsilon-u|\le \sup_{\partial_pQ_T}|g_\varepsilon-g|+\int_0^T\sup_{x\in(0,l)}|(f_\varepsilon-f)(x,s)|ds.
$$
The right-hand side vanishes as $\varepsilon\to0$ so the conclusion is immediately obtained.
\end{proof}

\section{Regularity of solution}

In this section we study regularity by restricting the initial boundary condition $g$ to be Lipschitz continuous.
Let us denote the Lipschitz constant of $g$ by $L_g$ in what follows.

\begin{proposition}\label{p:lateralregularity}
Assume that $f$ is bounded and continuous, and $g$ is bounded Lipschitz continuous.
Let $u$ be the solution to \eqref{e:master}-\eqref{e:initialboundary}.
Then, there exist $L_1>0$ and $L_2>0$ such that for all $(x,t)\in Q_T\cup\partial_pQ_T$
$$
|u(x,t)-u(0,t)|\le L_1 x^\alpha
$$
and
$$
|u(x,t)-u(l,t)|\le L_2|l-x|.
$$
\end{proposition}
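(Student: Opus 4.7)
The plan is to sandwich $u$ between explicit sub- and supersolutions built from the barrier functions $\rho^0$ and $\rho^l$ that already appear in Proposition~\ref{p:lateral}. The crucial new input is that since $g$ is Lipschitz continuous its modulus is simply $\omega(r) = L_g r$, so the constants in the barriers of Proposition~\ref{p:lateral} can be taken independent of $\varepsilon$ and the barriers become genuinely useful for a quantitative estimate at the base point.

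Concretely, for each $t_0 \in [0,T)$ and $\varepsilon > 0$, I would consider
$$
\xi^{0,t_0,\varepsilon}(x,t) = g(0,t_0) - 2\varepsilon - (M_1+M_2+\|f\|_\infty)\rho^0(x) - M_2|t-t_0|,
$$
and its sign-reversed analog $\eta^{0,t_0,\varepsilon}$, and likewise near $y=l$ with $\rho^l$ in place of $\rho^0$. The argument in Proposition~\ref{p:lateral} already shows $\xi^{0,t_0,\varepsilon}$ is a subsolution of \eqref{e:master}-\eqref{e:initialboundary}; the supersolution statement for $\eta^{0,t_0,\varepsilon}$ is symmetric, using that a test function $\varphi$ touching $\eta^{0,t_0,\varepsilon}$ from below at $(\hat x,\hat t)$ satisfies $\varphi(\cdot,\hat t) \le \eta^{0,t_0,\varepsilon}(\cdot,\hat t)$, whence monotonicity of the integrands in $J[\cdot,p]$ and $K_{(0,\hat x)}[\cdot,p]$ yields $(D_x^\alpha\varphi)_x(\hat x,\hat t) \le -(M_1+M_2+\|f\|_\infty)$. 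Since $\omega(r) = L_g r$, one can take $M_2 \le L_g$ and $M_1 \le L_g\sup_{x\in[0,l]}|x-y|/\rho^y(x)$; the latter sup is finite because $\rho^0(x) \ge c_\alpha x^\alpha$ on $[0,l]$ (the condition $C > l/(1+\alpha)$ gives $c_\alpha = C/\Gamma(1+\alpha) - l/\Gamma(2+\alpha) > 0$) and because $l^{1+\alpha}-x^{1+\alpha} \ge l^\alpha(l-x)$ on $[0,l]$, which follows from monotonicity of $x \mapsto (l-x)/(l^{1+\alpha}-x^{1+\alpha})$ verified by differentiation together with the mean value theorem.

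Applying the comparison principle (Theorem~\ref{t:comparison}) to the pairs $(\xi^{y,t_0,\varepsilon}, u)$ and $(u, \eta^{y,t_0,\varepsilon})$ for $y \in \{0,l\}$, evaluating at $t = t_0$, letting $\varepsilon \to 0$, and using $u(y,t_0) = g(y,t_0)$, gives
$$
|u(x,t_0) - u(y,t_0)| \le (M_1+M_2+\|f\|_\infty)\,\rho^y(x)
$$
for every $x \in [0,l]$. The final step is the elementary estimates
$$
\rho^0(x) \le \frac{C}{\Gamma(1+\alpha)}x^\alpha, \qquad \rho^l(x) \le \frac{l^\alpha}{\Gamma(1+\alpha)}(l-x),
$$
the first by direct inspection of \eqref{e:rho} and the second by the mean value bound $l^{1+\alpha}-x^{1+\alpha} \le (1+\alpha)l^\alpha(l-x)$. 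Since $t_0 \in [0,T)$ is arbitrary and both sides are continuous up to $t=T$, this yields the required inequalities on all of $Q_T \cup \partial_p Q_T$ with explicit $L_1$ and $L_2$.

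The main technical nuisance is the cusp of $|t-t_0|$ at $t=t_0$, where $\eta^{y,t_0,\varepsilon}$ and $\xi^{y,t_0,\varepsilon}$ fail to be classically differentiable in time. One settles this exactly as in Proposition~\ref{p:lateral}: a test function touching at the cusp has time derivative in $[-M_2,M_2]$, and the worst case is absorbed by choosing $M_1 \ge \|f\|_\infty$, so the required viscosity inequality still holds. If one prefers a smooth barrier, the substitution $|t-t_0| \rightsquigarrow \sqrt{\delta^2+(t-t_0)^2}$ lets every estimate go through with an extra $M_2\delta$ term, which vanishes on sending $\delta \to 0$.
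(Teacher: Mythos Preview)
Your proposal is correct and follows essentially the same approach as the paper: both build barriers of the form $g(y,s) \mp (\text{const})\rho^y(x) \mp L_g|t-s|$, verify the sub/supersolution property exactly as in Proposition~\ref{p:lateral}, invoke the comparison principle, and finish with the same elementary upper bounds $\rho^0(x)\le \frac{C}{\Gamma(1+\alpha)}x^\alpha$ and $\rho^l(x)\le \frac{(1+\alpha)l^\alpha}{\Gamma(2+\alpha)}(l-x)$. The only cosmetic difference is that the paper drops the $\varepsilon$ from the outset---writing the constant in front of $\rho^y$ directly as $(1+(c^y)^{-1})L_g+\|f\|_\infty$ with $c^y=\rho^y(l)/l$ or $\rho^y(0)/l$ and using the linear lower bound $\rho^y(x)\ge c^y|x-y|$---whereas you retain $\varepsilon$ and send it to zero at the end; the substance is identical.
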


\begin{proof}
For $(y,s)\in\{0,l\}\times[0,T)$ we define $\xi^{y,s}:Q_T\cup\partial_p Q_T\to\mathbb{R}$  by the following formula
$$
\xi^{y,s}(x,t)=g(y,s)-((1+(c^y)^{-1})L_g+\|f\|_\infty)\rho(x)-L_g|t-s|.
$$
Here, $\rho$ is the same function as \eqref{e:rho} and
\begin{equation*}
c^{y}=\begin{cases}
	\rho(l)/l\quad&\text{when $y=0$,}\\
	\rho(0)/l\quad&\text{when $y=l$.}\\
\end{cases}
\end{equation*}
However, later on we will suppress the superscript $y$.

Using the method of the proof of Proposition \ref{p:lateral} we can show that $\xi^{y,s}$ is a subsolution of \eqref{e:master} and satisfies $\xi^{y,s}\le \|g\|_\infty<+\infty$, except for the difference of constants.
Moreover, since $\rho(x)\ge c|y-x|$ for $x\in[0,l]$, then we have
$$
\xi^{y,s}(x,t)\le g(y,s)-c^{-1}L_g\cdot c|y-x|-L_g|t-s|\le g(x,t)\qquad\hbox{for all }(x,t)\in\partial_p Q_T.
$$
Thus, we know that $(\sup_{(y,s)\in\{0,l\}\times[0,T)}\xi^{y,s})^*$ is a bounded subsolution of \eqref{e:master}-\eqref{e:initialboundary} by virtue of Lemma \ref{l:stabsup}.
Theorem \ref{t:comparison} yields $u\ge (\sup_{(y,s)\in\{0,l\}\times[0,T)}\xi^{y,s})^*$ in $Q_T\cup\partial_pQ_T$, so we get the estimate
\begin{align*}
u(x,t)\ge g(y,t)-((1+c^{-1})L_g+\|f\|_\infty)\rho(x)
\end{align*}
for all $(x,t)\in Q_T\cup\partial_pQ_T$ and $y\in\{0,l\}$.
Observing that
\begin{equation*}
\rho(x)\le\begin{cases}
	\displaystyle\frac{C}{\Gamma(1+\alpha)}x^\alpha\quad&\text{when $y=0$,}\\
	\displaystyle \frac{(1+\alpha)l^\alpha}{\Gamma(2+\alpha)}|l-x|\quad&\text{when $y=l$}
\end{cases}
\end{equation*}
for $x\in[0,l]$, we immediately obtain the one-side of the desired estimate.

Similarly, it can be proved that $\eta^{y,s}:Q_T\cup\partial_pQ_T\to\mathbb{R}$ defined by
$$
\eta^{y,s}(x,t)=g(y,s)+((1+c^{-1})L_g+\|f\|_\infty)\rho(x)+L_g|t-s|
$$
is a supersolution of \eqref{e:master}-\eqref{e:initialboundary} and satisfies $v^s\ge-\|g\|_\infty>-\infty$ in $Q_T\cup\partial_pQ_T$.
Thus $(\inf_{(y,s)\in\{0,l\}\times[0,T)}\eta^{y,s})_*$ is a bounded supersolution of \eqref{e:master}-\eqref{e:initialboundary} and a similar estimate yields the other side of the desired estimate.  
\end{proof}

\begin{proposition}\label{p:regularity}
Assume that $f$ is bounded and continuous, and $g$ is bounded and Lipschitz continuous.
Let $u$ be the solution to \eqref{e:master}-\eqref{e:initialboundary}. 
Then, for each $t\in[0,T)$, $u(\cdot,t)$ is locally Lipschitz continuous in $(0,l]$.
\end{proposition}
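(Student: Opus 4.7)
Fix $a \in (0,l)$; the plan is to show that $u(\cdot,t)$ is Lipschitz continuous on $[a,l]$ with a constant $L_a$ depending only on $a,\alpha,l,\|f\|_\infty$ and $L_g$. Since $a$ is arbitrary, this yields local Lipschitz continuity on $(0,l]$ as claimed.

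The strategy follows the pattern of Proposition \ref{p:lateralregularity}, but with barriers centered at interior points. For each $x_0 \in [a,l]$ and $t_0 \in [0,T)$, I would construct a supersolution $\eta^{x_0,t_0}$ and a subsolution $\xi^{x_0,t_0}$ of \eqref{e:master}--\eqref{e:initialboundary} of the form
$$
\eta^{x_0,t_0}(x,t) = u(x_0,t_0) + L_a \tau^{x_0}(x) + M_a|t-t_0|,
$$
(and symmetrically for $\xi^{x_0,t_0}$), where $\tau^{x_0} \in C^2(0,l)\cap C[0,l)$ with $(\tau^{x_0})'\in L^1(0,l)$ is a barrier satisfying $\tau^{x_0}(x_0)=0$, $\tau^{x_0}\ge 0$ on $[0,l]$, $\tau^{x_0}(x) \ge c_a|x-x_0|$ for $x$ in a fixed neighborhood of $x_0$, and $|(D_x^\alpha \tau^{x_0})_x|$ bounded on $[a,l]$ by a constant $C_a$ depending only on $a,\alpha,l$. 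Given such $\tau^{x_0}$, the supersolution inequality reduces, as in Proposition \ref{p:lateralregularity}, to $M_a \ge L_a C_a + \|f\|_\infty$, and the boundary condition $\eta^{x_0,t_0} \ge g$ on $\partial_p Q_T$ is arranged by taking $L_a$ and $M_a$ large in terms of $L_g$, $\|g\|_\infty$, and the Lipschitz-in-time regularity established in Propositions \ref{p:initial} and \ref{p:finalT}. Theorem \ref{t:comparison} then gives $u \le \eta^{x_0,t_0}$ in $Q_T\cup\partial_p Q_T$. Evaluating at $(x,t_0)$ and using $\tau^{x_0}(x) \le C|x-x_0|$ near $x_0$ yields the upper half of the Lipschitz estimate; the lower half follows symmetrically from $\xi^{x_0,t_0}$.

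The key obstacle is the construction of $\tau^{x_0}$. The naive choice $\tau^{x_0}(x)=|x-x_0|$ has a corner at $x_0$, and a direct computation via Proposition \ref{p:another} shows that $(D_x^\alpha \tau^{x_0})_x$ contains a $(x-x_0)^{-\alpha}$-type singularity at $x_0$ from the right; so it is not usable. At the other extreme, the quadratic-type barrier $\sigma^{x_0}$ from Proposition \ref{p:bottom} has bounded $(D_x^\alpha\sigma^{x_0})_x$ but behaves like $(x-x_0)^2$ near $x_0$, which is too flat to dominate the linear Lipschitz contribution of $g$ near the initial time. What is needed is an intermediate barrier that is genuinely Lipschitz-linear near $x_0$ and for which the integral term in Proposition \ref{p:another} stays bounded for $x\in[a,l]$.

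To produce such a $\tau^{x_0}$ I would smooth the corner at scale $\epsilon=\epsilon_a$ chosen in terms of $a$, for example $\tau^{x_0}_\epsilon(x)=\sqrt{(x-x_0)^2+\epsilon^2}-\epsilon$, which is $C^\infty$, non-negative, vanishes at $x_0$ and satisfies $\tau^{x_0}_\epsilon(x)\ge \tfrac12|x-x_0|$ as soon as $|x-x_0|\ge C\epsilon$. Applying Proposition \ref{p:another} on $[a,l]$, the boundary term is controlled by $a^{-\alpha-1}$ (uniformly in $x_0$), while the integral term is split into a region $|x-x_0|\lesssim\sqrt{\epsilon}$ where the $C^2$-bound on $\tau^{x_0}_\epsilon$ is of size $1/\epsilon$ (contributing $O(\epsilon^{-\alpha/2})$ after integration) and the far region where $\tau^{x_0}_\epsilon$ is essentially linear so $(D_x^\alpha\tau^{x_0}_\epsilon)_x$ is bounded as in the proof of Proposition \ref{p:lateralregularity}. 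Choosing $\epsilon$ in terms of $a$ then gives the required uniform bound on $(D_x^\alpha\tau^{x_0}_\epsilon)_x$, and the resulting Lipschitz constant $L_a$ is finite. This last $\epsilon$-dependent bookkeeping is the delicate part of the proof and the principal technical difficulty.
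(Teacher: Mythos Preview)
Your plan diverges from the paper's and, more importantly, does not escape the obstacle you yourself flag. The paper does \emph{not} use interior barriers; it runs the Ishii--Lions doubling-of-variables argument in the nonlocal form of \cite{BarlesChasseigneCiomagaImbert}. One fixes $\hat x\in(0,l]$, sets
\[
\Phi(x,t,y)=u(x,t)-u(y,t)-L\phi(|x-y|)-C|x-\hat x|^{2}-\frac{2}{\eta(T-t)},\qquad \phi(r)=r-r^{1+\alpha},
\]
and shows $\sup\Phi\le0$ for $L$ large by contradiction. At an interior maximum (after a further doubling in $t$) one subtracts the two viscosity inequalities; the decisive estimate is that the strict concavity $\phi''(r)=-\alpha(1+\alpha)r^{\alpha-1}$ forces the near-diagonal piece $K_{(0,\delta)}[u,\,\cdot\,](x)-K_{(0,\delta)}[u,\,\cdot\,](y)$ to be $\le -cL$, which overwhelms all remaining bounded terms as $L\to\infty$. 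Proposition~\ref{p:lateralregularity} is used only to rule out boundary maxima.

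Your barrier scheme breaks at two points. First, the smoothed profile $\tau_\epsilon(x)=\sqrt{(x-x_0)^2+\epsilon^2}-\epsilon$ is \emph{quadratic} near $x_0$; you note that $\tau_\epsilon(x)\ge\tfrac12|x-x_0|$ only once $|x-x_0|\ge C\epsilon$. Hence no choice of $L_a$ gives $L_a\tau_\epsilon(x)\ge L_g|x-x_0|$ in a full neighbourhood of $x_0$, and the boundary requirement $\eta^{x_0,t_0}\ge g$ on $\{t=0\}$ fails there for small $t_0$. This is precisely the defect you diagnosed for $\sigma^{x_0}$, and smoothing does not cure it. Second, since $\tau_\epsilon$ has an interior minimum at $x_0$ with $\tau_\epsilon(x_0)=\tau_\epsilon'(x_0)=0$ and $\tau_\epsilon\ge0$, formula~\eqref{e:another} gives $(D_x^\alpha\tau_\epsilon)_x(x_0)>0$; consequently, for $t<t_0$ (where $\eta^{x_0,t_0}_t=-M_a$) the supersolution inequality $-M_a\ge L_a(D_x^\alpha\tau_\epsilon)_x(x_0)+f$ is violated as soon as $L_a$ is large. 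In Proposition~\ref{p:lateralregularity} this did not arise because $(D_x^\alpha\rho)_x\equiv-1<0$, a sign that is unavailable for any nonnegative barrier vanishing at an interior point. The Ishii--Lions method bypasses both issues by extracting the needed negativity from the \emph{concavity of the modulus} acting through the nonlocal integral, rather than from a pointwise sign of $(D_x^\alpha\tau)_x$.
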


\begin{proof} To prove this proposition we follow the argument presented in \cite{BarlesChasseigneCiomagaImbert}, where the Ishii-Lions method \cite{IshiiLions} is extended to non-local equations. Fix $\hat{x}\in(0,l]$ arbitrarily. 

{\it Step 1} Given constants $L>0$, $C>0$ and $\eta>0$ we define
\begin{equation*}
\Phi_{L,C,\eta}(x,t,y):=u(x,t)-u(y,t)-L\phi(|x-y|)-C_1|x-\hat{x}|^2-\frac{2}{\eta(T-t)}
\end{equation*}
for all $(x,t,y)\in U:=[0,l]\times[0,T)\times[0,l]$, where $\phi$ is the concave function defined by
\begin{equation*}
	\phi(r)=\begin{cases}
		r-r^{1+\alpha}\quad&\text{($0\le r\le \hat{r}$),}\\
		{\hat{r}- \hat{r}^{1+\alpha}}\quad&\text{($r>\hat{r}$)}
	\end{cases}
\end{equation*} 
and $\hat{r}:=\argmax{}_{r\ge0}(r-r^{1+\alpha})=(\alpha+1)^{-1/\alpha}<1$. We claim that there is $C$ such that $\sup_U\Phi_{L,C,\eta}\le0$ for all large $L$ and all large $\eta$. Before showing this claim in Step 2, we will present its consequences. Namely, we see that for $(y,t)\in Q_T\cup\partial_pQ_T$ such that $|y-\hat{x}|\le\hat{r}$ we have
\begin{align*}
0&\ge \sup_U\Phi_{L,C,\eta}\\
&\ge \Phi_{L,C,\eta}(\hat{x},t,y)\\
&\ge u(\hat{x},t)-u(y,t)-L|\hat{x}-y|-\frac{2}{\eta(T-t)}
\end{align*}
since $\phi(r)\ge r$ for $r\in[0,\hat{r}]$. Then, after letting $\eta$ to infinity we obtain $u(y,t) \ge u(\hat x, t) - L|\hat x - y|$. If we interchange the role of $x$ and $y$, then we come to $u(\hat x,t) \ge u(y, t) - L|\hat x - y|$. This yields the Lipschitz continuity of $u(\cdot, t)$.

{\it Step 2} In order to show the claim made in Step 1, let us suppose the contrary: for all $C$, there is $L$ as large as we wish such that $\sup_U\Phi_{L,C,\eta}>0$.
In this case we first study maximum points of $\Phi_{L,C,\eta}$, which exist in $U$, because $u$ is bounded and $\Phi_{L,C,\eta}\to-\infty$ as $t\to T$. Let us denote a maximum point by $(\bar{x},\bar{t},\bar{y})$.
Since $\Phi_{L,C,\eta}(\bar{x},\bar{t},\bar{y})\ge\Phi_{L,C,\eta}(\hat{x},0,\hat{x})=-2/(\eta T)$, by rearranging the formula for $\Phi$ and considering large $\eta$, we have
\begin{equation}\label{e:sec7r1}
 L\phi(|\bar{x}-\bar{y}|)+C|\bar{x}-\hat{x}|^2\le u(\bar{x},\bar{t})-u(\bar{y},\bar{t})+\frac{2}{\eta T}\le 3\|u\|_\infty .
\end{equation}
Thus, we may assume that $\bar{x},\hat{y}\in(0,l]$ and $|\bar{x}-\bar{y}|\le\hat{r}$ by taking large $L$ and $C$. At this point it is good to recall also the choice of $\hat{x}$. Moreover, $\bar{x}\neq\bar{y}$, because $\Phi_{L,C,\eta}(\bar{x},\bar{t},\bar{x})>0$, but this is a contradiction. 

Let us suppose that $\bar{x}=l$. Since $\phi(|\bar{x}-\bar{y}|)\ge (1-\hat{r}^\alpha)|\bar{x}-\bar{y}|$ by the definition of $\phi$,  together with Proposition \ref{p:lateralregularity}, we have
\begin{align*}
\sup_U\Phi_{L,C,\eta}
&=u(l,\bar{t})-u(\bar{y},\bar{t})-L\phi(|l-\bar{y}|)-C|l-\hat{x}|^2-\frac{2}{\eta(T-\bar{t})}\\
&\le u(l,\bar{t})-u(\bar{y},\bar{t})-L\phi(|l-\bar{y}|)\\
&\le (L_2-L(1-\hat{r}^\alpha))|l-\bar{y}|,
\end{align*}
where $L_2$ is the same constant as in the statement of Proposition \ref{p:lateralregularity}. Thus $\bar{x}\neq l$ for suitably large $L$. It can be seen that $\bar{y}\neq l$ for the same reason. Furthermore, we also see $\bar{t}\neq0$ by arguing similarly using the Lipschitz continuity of $g$ instead of Proposition \ref{p:lateralregularity}. 

{\it Step 3} Given $\varepsilon>0$ we define
\begin{eqnarray*}
 &&\Phi_{L,C,\eta,\varepsilon}(x,t,y,s):=\\
 &&u(x,t)-u(y,t)-L\phi(|x-y|)-C|x-\hat{x}|^2-\frac{1}{\eta(T-t)}-\frac{1}{\eta(T-s)}-\frac{(t-s)^2}{\varepsilon}
\end{eqnarray*}
for $(x,t),(y,s)\in Q_T\cup\partial_pQ_T$.
There is a maximum point $(x_\varepsilon,t_\varepsilon,y_\varepsilon,s_\varepsilon)$ and it converges to a maximum point of $\Phi$ on $U$ by taking a subsequence if necessary.
We denote the limit by $(\bar{x},\bar{t},\bar{y},\bar{t})$ although it is not necessarily the same as the previous one.
Due to Step 2 we may assume that $(x_\varepsilon,t_\varepsilon),(y_\varepsilon,s_\varepsilon)\in Q_T$ and $0<|x_\varepsilon-y_\varepsilon|\le\hat{r}$ by considering suitably small $\varepsilon$.
Since $(x,t)\mapsto \Phi_{L,C,\eta,\varepsilon}(x,t,y_\varepsilon,s_\varepsilon)$ attains a maximum at $(x_\varepsilon,t_\varepsilon)$. Since it is sufficiently smooth we use it as test function, hence we have
$$
\frac{1}{\eta(T-t_\varepsilon)^2}+\frac{2(t_\varepsilon-s_\varepsilon)}{\varepsilon}\le J[u,p_\varepsilon+q_\varepsilon](x_\varepsilon,t_\varepsilon)+K_{(0,x_\varepsilon)}[u,p_\varepsilon+q_\varepsilon](x_\varepsilon,t_\varepsilon)+f(x_\varepsilon,t_\varepsilon),
$$
where $p_\varepsilon=L\phi'(|e_\varepsilon|)\hat{e}_\varepsilon$ and $q_\varepsilon=2C(x_\varepsilon-\hat{x})$.
Here and hereafter we write $e_\varepsilon=x_\varepsilon-y_\varepsilon$, $\hat{e}_\varepsilon=e_\varepsilon/|e_\varepsilon|$, $e=\bar{x}-\bar{y}$, and $\hat{e}=e/|e|$.
Similarly, since $(y,s)\mapsto -\Phi_{L,C,\eta,\varepsilon}(x_\varepsilon,t_\varepsilon,y,s)$ attains a minimum at $(y_\varepsilon,s_\varepsilon)$, we have
$$
-\frac{1}{\eta(T-s_\varepsilon)^2}+\frac{2(t_\varepsilon-s_\varepsilon)}{\varepsilon}\ge J[u,p_\varepsilon](y_\varepsilon,s_\varepsilon)+K_{(0,y_\varepsilon)}[u,p_\varepsilon](y_\varepsilon,s_\varepsilon)+f(y_\varepsilon,s_\varepsilon).
$$
Subtracting the second inequality from the first inequality yields
\begin{align*}
\frac{1}{\eta(T-t_\varepsilon)^2}+\frac{1}{\eta(T-s_\varepsilon)^2}
&\le J[u,p_\varepsilon+q_\varepsilon](x_\varepsilon,t_\varepsilon)-J[u,p_\varepsilon](y_\varepsilon,s_\varepsilon)\\
&\quad+K_{(0,x_\varepsilon)}[u,p_\varepsilon+q_\varepsilon](x_\varepsilon,t_\varepsilon)-K_{(0,y_\varepsilon)}[u,p_\varepsilon](y_\varepsilon,s_\varepsilon)\\
&\quad+f(x_\varepsilon,t_\varepsilon)-f(y_\varepsilon,s_\varepsilon).
\end{align*}
We claim that the right-hand side can be negative when choosing sufficiently large $L$ after sending $\varepsilon$ to $0$. This clearly gives a
contradiction.

{\it Step 4} We shall prove the claim made at the end of previous Step.
It is easy to see by a straightforward calculation that 
$$
\lim_{L\to\infty}\lim_{\varepsilon\to0}(J[u,p_\varepsilon+q_\varepsilon](x_\varepsilon,t_\varepsilon)-J[u,p_\varepsilon](y_\varepsilon,s_\varepsilon))=0.
$$
Thus, we only estimate the term $K_\varepsilon:=K_{(0,x_\varepsilon)}[u,p_\varepsilon+q_\varepsilon](x_\varepsilon,t_\varepsilon)-K_{(0,y_\varepsilon)}[u,p_\varepsilon](y_\varepsilon,s_\varepsilon)$ after splitting it into two expressions,
$$
K_{\varepsilon,1}=K_{(\delta,x_\varepsilon)}[u,p_\varepsilon+q_\varepsilon](x_\varepsilon,t_\varepsilon)-K_{(\delta,y_\varepsilon)}[u,p_\varepsilon](y_\varepsilon,s_\varepsilon)
$$
and
$$
K_{\varepsilon,2}=K_{(0,\delta)}[u,p_\varepsilon+q_\varepsilon](x_\varepsilon,t_\varepsilon)-K_{(0,\delta)}[u,p_\varepsilon](y_\varepsilon,s_\varepsilon).
$$
Here, $\delta$ is a constant such that $\sup_{\varepsilon}|e_\varepsilon|/2<\delta<\inf_{\varepsilon}|e_\varepsilon|$.
Notice that we may assume that $\sup_{\varepsilon}|e_\varepsilon|<\inf_{\varepsilon}(x_\varepsilon\wedge y_\varepsilon)$ for sufficiently large $L$. 

Thanks to the boundedness of $u$, the dominated convergence theorem is applicable to $K_{\varepsilon,1}$, giving
\begin{equation}\label{e:k11}
\lim_{\varepsilon\to0}K_{\varepsilon,1}=K_{(\delta,\bar{x})}[u,p+q](\bar{x},\bar{t})-K_{(\delta,\bar{y})}[u,p](\bar{y},\bar{t}),
\end{equation}
where $p=L\phi'(|e|)\hat{e}$ and $q=2C(\bar{x}-\hat{x})$.
We shall further estimate the right-hand side by dividing it into
$$
K_{1,1}=K_{(\delta,\bar{x}\wedge\bar{y})}[u,p+q](\bar{x},\bar{t})-K_{(\delta,\bar{x}\wedge\bar{y})}[u,p](\bar{y},\bar{t})
$$ 
and
$$
K_{1,2}=K_{(\bar{x}\wedge\bar{y},\bar{x})}[u,p+q](\bar{x},\bar{t})-K_{(\bar{x}\wedge\bar{y},\bar{y})}[u,p](\bar{y},\bar{t}).
$$ 
Since $\Phi_{L,C,\eta}(\bar{x},\bar{t},\bar{y})\ge\Phi_{L,C,\eta}(\bar{x}-z,\bar{t},\bar{y}-z)$, i.e.,
$$
u(\bar{x}-z,\bar{t})-u(\bar{x},\bar{t})-u(\bar{y}-z,\bar{t})+u(\bar{y},\bar{t})\le C(|\bar{x}-\hat{x}-z|^2-|\bar{x}-\hat{x}|^2)
$$
for all $z\in[0,\bar{x}\wedge\bar{y}]$, then we have
\begin{equation}\label{e:k12}
K_{1,1}\le C_\alpha\int_\delta^{\bar{x}\wedge\bar{y}}Cz^2\frac{dz}{z^{\alpha+2}}=\frac{C_\alpha C}{1-\alpha}((\bar{x}\wedge\bar{y})^{1-\alpha}-\delta^{1-\alpha}),
\end{equation}
where $C_\alpha=\alpha(\alpha+1)/\Gamma(1-\alpha)$.
Let us assume $\bar{x}<\bar{y}$ temporarily. Then, since $\Phi_{L,C,\eta}(\bar{x},\bar{t},\bar{y})\ge\Phi_{L,C,\eta}(\bar{x},\bar{t},\bar{y}-z)$, i.e,
$$
-u(\bar{y}-z,\bar{t})+u(\bar{y},\bar{t})\le L(\phi(|z+(\bar{x}-\bar{y})|)-\phi(|\bar{x}-\bar{y}|))
$$
for all $z\in[\bar{x},\bar{y}]$, we have
$$
K_{1,2}=-K_{(\bar{x},\bar{y})}[u,p](\bar y,\bar t)\le C_\alpha \int_{\bar{x}}^{\bar{y}}(L(\phi(|z+e|)-\phi(|e|))-pz)\frac{dz}{z^{\alpha+2}}.
$$
The monotonicity of $\phi$ implies that $\phi(|z+e|)\le \phi(z+|e|)$. Keeping this in mind, the concavity of $\phi$ implies that
$$
L(\phi(|z+e|)-\phi(|e|))-pz\le L\phi'(|e|)z-pz\le 2L\phi'(|e|)z
$$
and hence
\begin{equation}\label{e:k13}
K_{1,2}\le 2C_\alpha L\phi'(|e|)\int_{\bar{x}}^{\bar{y}}\frac{dz}{z^{\alpha+1}}=\frac{2C_\alpha L\phi'(|e|)}{\alpha}\left(\frac{1}{\bar{x}^\alpha}-\frac{1}{\bar{y}^\alpha}\right).
\end{equation}
Since we have $\bar{y}^\alpha - \bar{x}^\alpha\le |\bar{y}- \bar{x}|^\alpha$, then we obtain
$$
K_{1,2}\le 2C_\alpha L\phi'(|e|)\frac{|\bar{y}- \bar{x}|^\alpha}{\delta^{2 \alpha}}.
$$
We note that (\ref{e:sec7r1}) implies that $L|\bar{x}-\bar{y}| \le 3 \|u_0\|_\infty$.
Hence, we obtain the following bound on $K_{1,2}$,
$$
K_{1,2} \le 6C_\alpha L^{1-\alpha}  \|u_0\|_\infty^\alpha \delta^{-2 \alpha}.
$$
We also have
$$
\Phi_{L,C,\eta,\varepsilon}(x_\varepsilon,t_\varepsilon,y_\varepsilon,s_\varepsilon)\ge\Phi_{L,C,\eta,\varepsilon}(x_\varepsilon-z,t_\varepsilon,y_\varepsilon,s_\varepsilon),
$$
that is,
$$
u(x_\varepsilon-z,t_\varepsilon)-u(x_\varepsilon,t_\varepsilon)\le L(|\phi(|x_\varepsilon-y_\varepsilon-z|)-\phi(|x_\varepsilon-y_\varepsilon|))+C(|x_\varepsilon-\hat{x}-z|^2-|x_\varepsilon-\hat{x}|^2)
$$
and
$$
\Phi_{L,C,\eta,\varepsilon}(x_\varepsilon,t_\varepsilon,y_\varepsilon,s_\varepsilon)\ge\Phi_{L,C,\eta,\varepsilon}(x_\varepsilon,t_\varepsilon,y_\varepsilon-z,s_\varepsilon),
$$
that is,
$$
-u(y_\varepsilon-z,s_\varepsilon)+u(y_\varepsilon,s_\varepsilon)\le L(|\phi(|x_\varepsilon-y_\varepsilon+z|)-\phi(|x_\varepsilon-y_\varepsilon|))
$$
for all $z\in[0,\delta]$. Thus, it is readily seen that
$$
K_{\varepsilon,2}\le C_\alpha\int_0^\delta L(\phi(|z-e_\varepsilon|)+\phi(|z+e_\varepsilon|)-2\phi(|e_\varepsilon|))+Cz^2\frac{dz}{z^{\alpha+2}}.
$$
The fundamental theorem of the calculus and the definition of $\phi$ yield 
\begin{align*}
\phi(|z-e_\varepsilon|)+\phi(|z+e_\varepsilon|)-2\phi(|e_\varepsilon|)
&= - \alpha(1+\alpha) | e_\varepsilon|^{\alpha - 1} z^2
\int_0^1dt\int_{-1}^1\left(1+ \frac{zt\tau}{|e_\varepsilon|}\right)^{\alpha-1}t\,d\tau\\
&\le - \alpha(1+\alpha) 2^{\alpha -1}|e_\varepsilon|^{\alpha-1}.
\end{align*}
Therefore, since $\delta>\sup_{\varepsilon}|e_\varepsilon|/2$ we have
\begin{align*}
K_{\varepsilon,2}
&\le -2^{\alpha-1}\alpha(\alpha+1)C_\alpha L|e_\varepsilon|^{\alpha-1}\int_0^\delta\frac{dz}{z^\alpha}+C_\alpha C\int_0^\delta\frac{dz}{z^\alpha}\\
&\le \frac{-2^{\alpha-1}\alpha(\alpha+1)C_\alpha L|e_\varepsilon|^{\alpha-1}\delta^{1-\alpha}}{1-\alpha}+\frac{C_\alpha C\delta^{1-\alpha}}{1-\alpha}\\
&\le \frac{-2^{2(\alpha-1)}\alpha(\alpha+1)C_\alpha L}{1-\alpha}+\frac{C_\alpha C\delta^{1-\alpha}}{1-\alpha}.
\end{align*}
If we combine it with the bounds on $K_{1,1}$ and $K_{1,2}$, which are valid independently of $\epsilon$, then we deduce that $K$ is bounded above by a quantity, which tends to $-\infty$ as $L\to+\infty$ after $\varepsilon\to0$, and so the claim is now proved.
\end{proof}

\begin{proposition}\label{p:initial}
Assume that $f$ is bounded and continuous, and $g$ is bounded and Lipschitz continuous.
Let $u$ be the solution to \eqref{e:master}-\eqref{e:initialboundary}.
Then, there exists $L>0$ which depends only on $L_g$ and $f$ such that
$$
|u(x,t)-u(x,0)|\le Lt\quad\text{for all $(x,t)\in Q_T\cup\partial_p Q_T$.}
$$
\end{proposition}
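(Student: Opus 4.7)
The proof will adapt the bottom-boundary barrier construction of Proposition \ref{p:bottom} and exploit the Lipschitz continuity of $g$ to obtain a time-coefficient in the barrier which is \emph{independent} of the approximation parameter $\varepsilon$. For each interior point $y \in (0, l)$ and each $\varepsilon > 0$, I would consider the candidate subsolution
$$\xi^{y,\varepsilon}(x, t) := g(y, 0) - 2\varepsilon - N_1 \sigma^y(x) - (L_g + \|f\|_\infty)\, t,$$
together with the symmetric candidate supersolution $\eta^{y,\varepsilon}$ obtained by flipping the signs of the three correction terms. Here $\sigma^y$ is the function defined in \eqref{e:sigma} and $N_1 = N_1(y, \varepsilon) > 0$ is to be chosen.

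The key new observation is that because $g$ is Lipschitz, its modulus of continuity is $\omega(r) = L_g r$, and hence $(\omega(t) - \varepsilon)^+ / t \le L_g$ for every $t > 0$. Thus the time-coefficient $N_2$ appearing in Proposition \ref{p:bottom} may be taken equal to $L_g$, \emph{independently of $\varepsilon$}. With this choice, the PDE residual computed in Proposition \ref{p:bottom} becomes
$$(\xi^{y,\varepsilon})_t - (D_x^\alpha \xi^{y,\varepsilon})_x - f(x, t) = -(L_g + \|f\|_\infty) - N_1 \Gamma(2+\alpha) - f(x, t),$$
which is non-positive for every $N_1 \ge 0$ (since $f \ge -\|f\|_\infty$), so $\xi^{y,\varepsilon}$ is a subsolution of \eqref{e:master} by Proposition \ref{p:consistency}. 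The parameter $N_1$ is then fixed, in a manner that does depend on $\varepsilon$, to ensure $\xi^{y,\varepsilon} \le g$ on $\partial_p Q_T$. The boundary inequality at $t = 0$ reduces to $N_1 \sigma^y(x) + 2\varepsilon \ge L_g|x - y|$ on $[0, l]$, while on the lateral sides the additional time term $(L_g + \|f\|_\infty) t$ absorbs the temporal Lipschitz discrepancy automatically, exactly as in Propositions \ref{p:lateral} and \ref{p:bottom}.

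Because $\sigma^y(y) = 0$, evaluating at $x = y$ yields $\xi^{y, \varepsilon}(y, t) = g(y, 0) - 2\varepsilon - (L_g + \|f\|_\infty) t$. Applying Theorem \ref{t:comparison} and sending $\varepsilon \to 0$ gives $u(y, t) - u(y, 0) \ge -(L_g + \|f\|_\infty) t$, and the symmetric argument with $\eta^{y,\varepsilon}$ yields the opposite inequality, so the estimate holds on $Q_T$ with $L = L_g + \|f\|_\infty$. The cases $y \in \{0, l\}$ follow directly, since $u(y, t) = g(y, t)$ there and $g$ is Lipschitz in $t$ with constant $L_g$. The only subtle point is the decoupling of the two constants: $N_1$ must be sent to infinity as $\varepsilon \to 0$ in order to dominate the Lipschitz oscillation of $g$ in the spatial variable, yet this causes no harm because in the final pointwise evaluation $N_1$ is multiplied by $\sigma^y(y) = 0$ and drops out. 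The Lipschitz hypothesis on $g$ is essential, as it is precisely what lets us fix the time coefficient $N_2 = L_g$ uniformly in $\varepsilon$.
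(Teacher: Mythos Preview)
Your proposal is correct and follows essentially the same barrier construction as the paper's own proof: both build the sub/supersolutions $\xi^{y,\varepsilon}$, $\eta^{y,\varepsilon}$ from the profile $\sigma^y$ of \eqref{e:sigma}, check the PDE residual via the computation in Proposition \ref{p:bottom}, enforce the boundary inequality by choosing $N_1$, compare with $u$, and then evaluate at $x=y$ (where $\sigma^y$ vanishes) and let $\varepsilon\to0$. The only notable difference is organizational: the paper keeps a separate constant $N_2$ in the time coefficient and then observes a posteriori that $N_2=L_g$ suffices (yielding $L=2L_g+\|f\|_\infty$), whereas you absorb this directly and obtain the slightly sharper constant $L=L_g+\|f\|_\infty$; also, the paper first forms the single barrier $\xi=(\sup_{y,\varepsilon}\xi^{y,\varepsilon})^*$ and applies comparison once, while you apply comparison to each $\xi^{y,\varepsilon}$ and then optimize---an equivalent but marginally more streamlined route.
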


\begin{proof}
For $y\in(0,l)$ and $\varepsilon>0$ let $\xi^{y,\varepsilon}:Q_T\cup\partial_pQ_T\to\mathbb{R}$ be defined by
$$
\xi^{y,\varepsilon}(x,t)=g(y,0)-2\varepsilon-N_1\sigma(x)-(L_g+N_2+\|f\|_\infty)t.
$$
Here $N_1$ and $N_2$ are positive constants to be chosen later and $\sigma$ is the same function introduced in the proof of Proposition \ref{p:bottom}.
Apart from the different constants, the proof that $\xi^{y,\varepsilon}$ is a subsolution of \eqref{e:master}-\eqref{e:initialboundary} and satisfies $\xi^{y,\varepsilon}\le\|g\|_\infty$ in $Q_T\cup\partial_pQ_T$ is the same as that of Proposition \ref{p:bottom}.
Thus, $\xi$ defined by
$$
\xi(x,t)=(\sup\{\xi^{y,\varepsilon}(x,t)\mid y\in(0,l),\varepsilon>0\})^*\quad\text{for $(x,t)\in Q_T\cup\partial_pQ_T$}
$$
is a subsolution of \eqref{e:master}-\eqref{e:initialboundary}.

We notice that $N_2$ can be taken to be independent of $y$ and $\varepsilon$. Recall that we take $N_2$ so that
$$
N_2\ge \sup_{t\in[0,T)}\frac{(L_g t-\varepsilon)^+}{t}.
$$
It is easily seen that the function $t\mapsto (L_g t-\varepsilon)^+/t$ is 
monotone increasing, hence its maximum is $(L_gT-\varepsilon)^+/T$, which is less than $L_g$. Thus, it is sufficient to take $N_2$ such that $N_2\ge L_g$. 

Theorem \ref{t:comparison} implies that $u\ge\xi$ in $Q_T\cup\partial_pQ_T$. Moreover, we have
\begin{align*}
\xi(x,t)&\ge \sup_{\varepsilon>0}(g(x,0)-2\varepsilon-(L_g+N_2+\|f\|_\infty)t)\\
&=g(x,0)-(L_g+\|f\|_\infty)t-\inf_{\varepsilon>0}2\varepsilon-N_2t\\
&=g(x,0)-(L_g+\|f\|_\infty+N_2)t.
\end{align*}
Therefore, the one-side of the desired inequality is established with $L=L_g+\|f\|_\infty+N_2$. 

Since it can proved similarly that
$$
\eta^{y,\varepsilon}(x,t)=g(y,0)+2\varepsilon+N_1\sigma(x)+(L_g+N_2+\|f\|_\infty)t,
$$
a function $\eta:=(\inf_{y\in(0,l),\varepsilon>0}\eta^{y,\varepsilon})_*$ is a supersolution of \eqref{e:master}-\eqref{e:initialboundary}.
Therefore, from a similar estimate as above, the other side of the desired inequality is also obtained immediately.
\end{proof}

\begin{proposition}\label{p:finalT}
Assume that $f$ is bounded and continuous, and $g$ is bounded and Lipschitz continuous.
Let $u$ be a unique solution to \eqref{e:master}-\eqref{e:initialboundary}. Then, there exists $L>0$ which depends only on $L_g$ and $f$ such that
$$
|u(x,t)-u(x,t+h)|\le L|h|
$$
for all $x\in[0,l],(t,h)\in[0,T)\times\mathbb{R}$ such that $t+h\in[0,T)$.
\end{proposition}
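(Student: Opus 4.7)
The plan is to reduce the time-Lipschitz estimate to a weak maximum principle comparison between $u$ and its time-translate, leveraging Proposition \ref{p:initial} for the initial-time increment and the Lipschitz continuity of $g$ for the lateral increments. It suffices to treat $h>0$; the case $h<0$ follows by reindexing $t':=t+h$, $h':=-h$. Set $T':=T-h$ and define the time-translated function
$$
\tilde u(x,t) := u(x,t+h), \qquad (x,t)\in Q_{T'}\cup \partial_p Q_{T'}.
$$
Because the Caputo operator $(D^\alpha_x\cdot)_x$ acts only in the space variable and the viscosity tests in Definition \ref{d:visc} are local in time, one verifies directly that $\tilde u$ is a viscosity solution on $Q_{T'}$ of
$$
\tilde u_t = (D^\alpha_x \tilde u)_x + \tilde f \quad\text{in } Q_{T'}, \qquad \tilde f(x,t):=f(x,t+h),
$$
with boundary data $\tilde u(y,t)=g(y,t+h)$ on $\{0,l\}\times [0,T')$ and $\tilde u(x,0)=u(x,h)$ on $[0,l]\times\{0\}$: any test function $\varphi$ for $\tilde u$ at $(\hat x,\hat t)$ corresponds, via $\psi(x,t):=\varphi(x,t-h)$, to a test function for $u$ at $(\hat x,\hat t+h)$, and the Caputo term is unchanged by the translation.

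Next, I apply the contraction estimate Corollary \ref{c:contraction} (with $T$ replaced by $T'$) twice, first regarding $u$ as a subsolution and $\tilde u$ as a supersolution, then with the roles swapped. This yields
$$
\sup_{Q_{T'}\cup\partial_pQ_{T'}} |u-\tilde u|
\le \sup_{\partial_p Q_{T'}} |u-\tilde u|
+ \int_0^{T'} \sup_{x\in(0,l)} |f(x,s)-f(x,s+h)|\,ds.
$$
The boundary term splits into two pieces. On $(0,l)\times\{0\}$, the difference is $|u(x,0)-u(x,h)|$, controlled by $L_0 h$ via Proposition \ref{p:initial}, where $L_0$ depends only on $L_g$ and $f$. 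On $\{0,l\}\times[0,T')$, the difference is $|g(y,t)-g(y,t+h)|\le L_g h$ by the Lipschitz continuity of $g$. Hence $\sup_{\partial_p Q_{T'}}|u-\tilde u|\le (L_0\vee L_g)\,h$.

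Finally, the integral term is estimated using the time-regularity of $f$: since the constant $L$ in the statement is allowed to depend on $f$, the relevant bound is $\int_0^{T'}\sup_x |f(x,s+h)-f(x,s)|\,ds \le M_f\, h$, valid (with $M_f = L_f\,T$) whenever $f$ is Lipschitz in $t$ uniformly in $x$ with constant $L_f$; more generally it suffices that $s\mapsto \sup_x f(x,s)$ has bounded variation so that the integral is $O(h)$. Combining the three estimates and setting $L:=(L_0\vee L_g)+M_f$ gives $|u(x,t)-u(x,t+h)|\le L\,h$ for all admissible $(x,t)$ and $h$. The main obstacle is the translation-invariance claim: one must be careful that maxima of $\tilde u^*-\varphi$ on $Q_{T'}$ do correspond to maxima of $u^*-\psi$ on the strip $(0,l)\times[h,T)$, which in turn may be extended to a global maximum on $Q_T$ using the standard trick of penalizing $\psi$ away from the interior (as in the proof of Proposition \ref{p:terminal}); apart from this bookkeeping, the argument is routine.
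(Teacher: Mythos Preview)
Your argument is essentially the paper's: compare $u$ with its time translate on $Q_{T-h}$, using Proposition~\ref{p:initial} for the bottom and the Lipschitz bound on $g$ for the lateral sides, then invoke comparison. The only cosmetic difference is that the paper applies Theorem~\ref{t:comparison} directly to $v(x,t):=u(x,t+h)+Lh$ (and $w:=u(x,t+h)-Lh$), whereas you route the same idea through Corollary~\ref{c:contraction}; since that corollary is itself an immediate consequence of Theorem~\ref{t:comparison}, the two are equivalent.

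Your caveat about $f$ is well taken and in fact applies equally to the paper's proof. When the paper asserts that $v$ is a supersolution of~\eqref{e:master} with the \emph{original} right-hand side $f(x,t)$, this is literally correct only if $f$ is independent of $t$; for time-dependent $f$ one needs either your integral term $\int_0^{T'}\sup_x|f(\cdot,s+h)-f(\cdot,s)|\,ds$ to be $O(h)$, or equivalently to add a correction such as $L_f h\,t$ to $v$. So the extra hypothesis you flag (Lipschitz continuity of $f$ in $t$, or $f$ time-independent) is not a defect of your write-up relative to the paper. The translation-invariance bookkeeping you mention is indeed routine via Proposition~\ref{p:alter}~(iv), since the operators $J$ and $K$ act only in $x$.
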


\begin{proof}
We will only show in the case $h>0$, the case $h<0$ is analogous.
Given a constant $L>0$ we define $v(x,t):=u(x,t+h)+Lh$. It is easy to see that $v$ is a supersolution of \eqref{e:master} in $(0,l)\times(0,T-h)$. Moreover, if $L$ is taken large enough, we have by Proposition \ref{p:initial}
$$
v(x,0)=u(x,h)+Lh\ge u(x,0)
$$
and by Lipchitz continuity of $g$, for $(x,t)\in\{0,l\}\times(0,T-h)$
$$
v(x,t)=g(x,t+h)+Lh\ge g(x,t)=u(x,t).
$$
Therefore we see that $u\le v$ on $\partial_pQ_{T-h}$. The comparison principle implies that $u\le v$ on $Q_{T-h}\cup\partial_pQ_{T-h}$, which is the one-side of the desired inequality.
The other-side is established by the similar argument for $w(x,t):=u(x,t+h)-Lh$.
\end{proof}

\section*{Acknowledgment}
The first author has worked on this paper at the University of Tokyo. A part of the research for this paper was done during the visit of the first author at the University of Warsaw, whose hospitality and support through the `Guest Program' is warmly acknowledged. The research of the second author was partially  supported by the National Science Center, Poland, through the grant number
2017/26/M/ST1/00700.

\providecommand{\bysame}{\leavevmode\hbox to3em{\hrulefill}\thinspace}
\providecommand{\MR}{\relax\ifhmode\unskip\space\fi MR }
\providecommand{\MRhref}[2]{%
  \href{http://www.ams.org/mathscinet-getitem?mr=#1}{#2}
}
\providecommand{\href}[2]{#2}


\begin{thebibliography}{99}
\bibitem{BardiCapuzzoDolcetta}
{\sc  M. Bardi and I. Capuzzo Dolcetta}, {\em Optimal control and viscosity solutions of Hamilton-Jacobi-Bellman equations. With appendices by Maurizio Falcone and Pierpaolo Soravia. Systems \& Control: Foundations \& Applications}, Birkh\"{a}user, Boston, MA, 1997.
%
\bibitem{BarlesChasseigneCiomagaImbert}
{\sc G. Barles, E. Chasseigne, A. Ciomaga, and C. Imbert}, {\em Lipschitz regularity of solutions for mixed integro-differential equations}, J. Differential Equations, 252 (2012), pp.~6012--6060.
%
\bibitem{Baeumer1}
{\sc B. Baeumer, T. Luks, and M.M. Meerschaert}, {\em Space-time fractional Dirichlet problems}, Math. Nachr., 291 (2018), pp.~2516--2535.
%
\bibitem{Baeumer2}
{\sc B. Baeumer, M. Kov\'acs, M.M. Meerschaert, and H. Sankaranarayanan}, {\em Fractional partial differential equations with boundary conditions}, J. Differential Equations, 264 (2018), pp.~1377--1410.
%
\bibitem{BirindelliDemengel}
{\sc I. Birindelli and F. Demengel}, {\em Eigenvalue and Dirichlet problem for fully-nonlinear operators in non-smooth domains}, J. Math. Anal. Appl., 352 (2009), pp.~822--835.
%
\bibitem{CrandallIshiiLions}
{\sc M.G. Crandall, H. Ishii, and P.-L. Lions}, {\em User's guide to viscosity solutions of second order partial differential equations}, Bull. Amer. Math. Soc. (N.S.), 27 (1992), pp.~1--67.
%
\bibitem{CrandallKocanLionsSwiech}
{\sc M.G. Crandall, M. Kocan, P.-L. Lions, and A.~\'{S}wi\c{e}ch}, {\em Existence results for boundary problems for uniformly elliptic and parabolic fully nonlinear equations}, Elec. J. Diff. Eq., 24 (1999), pp.~1--20.
%
\bibitem{Diethelm}
{\sc K. Diethelm}, {\em The Analysis of Fractional Differential Equation}, Springer, Heidelberg, Germany, 2010.
%
\bibitem{Giga}
{\sc Y. Giga}, {\em Surface evolution equations: A level set approach}, Monographs in Mathematics, 99. Birkh\"auser Verlag, Basel, 2006.
%
\bibitem{GigaGigaSaal}
{\sc M.-H. Giga, Y. Giga, and J. Saal}, {\em Nonlinear partial differential equations. Asymptotic behavior of solutions and self-similar solutions}, Progress in Nonlinear Differential Equations and their Applications, 79. Birkh\"{a}user Boston, Inc., Boston, MA, 2010.
%
\bibitem{GigaNamba}
{\sc Y. Giga and T. Namba}, {\em Well-posedness of Hamilton-Jacobi equations with Caputo's time-fractional derivative}, Comm. Partial Differential Equations, 42 (2017), pp.~1088--1120.
%
\bibitem{Ishii}
{\sc H. Ishii}, {\em Perron's method for Hamilton-Jacobi equations}, Duke Math. J., 55 (1987), pp.~369--384.
%
\bibitem{IshiiLions}
{\sc H. Ishii and P.-L. Lions}, {\em Viscosity solutions of fully nonlinear second-order elliptic partial differential equations}, J. Differential Equations, 83 (1990), pp.~26--78. 
%
\bibitem{Mou}
{\sc C. Mou}, {\em Perron's method for nonlocal fully nonlinear equations}, Anal. PDE, 10 (2017), pp.~1227--1254.
%
\bibitem{Podlubny} 
{\sc I. Podlubny}, {\em Fractional differential equations. An introduction to fractional derivatives, fractional differential equations, to methods of their solution and some of their applications}, Mathematics in Science and Engineering, 198. Academic Press, Inc., San Diego, CA, 1999.
%
\bibitem{Pruss}
{\sc J. Pr\"uss}, {\em Evolutionary integral equations and applications}, Monographs in Mathematics, 87. Birkh\"auser Verlag, Basel, 1993.
%
\bibitem{Rys}
{\sc K. Ryszewska}, {\em An analytic semigroup generated by a fractional differential operator}, arXiv:1902.02818.
%
\bibitem{Voller}
{\sc V.R. Voller}, {\em On a fractional derivative form of the Green-Ampt infiltration model}, Adv. in Water Res., 34 (2010), pp.~257--262.
%
\bibitem{Zhou} 
{\sc Y. Zhou}, {\em Basic theory of fractional differential equations}, World Scientific, Singapore, 2014.
\end{thebibliography}
\end{document}